    \DeclareFontFamily{U}{wncy}{}
    \DeclareFontShape{U}{wncy}{m}{n}{<->wncyr10}{}
    \DeclareSymbolFont{mcy}{U}{wncy}{m}{n}
    \DeclareMathSymbol{\Sh}{\mathord}{mcy}{"58} 
\DeclareMathOperator{\Cl}{Cl}
\DeclareMathOperator{\id}{id}
\DeclareMathOperator{\Br}{Br}
\DeclareMathOperator{\GL}{GL}
\DeclareMathOperator{\PGL}{PGL}
\DeclareMathOperator{\Aut}{Aut}
\DeclareMathOperator{\End}{End}
\DeclareMathOperator{\Hom}{Hom}
\DeclareMathOperator{\Pic}{Pic}
\DeclareMathOperator{\rank}{rank}
\DeclareMathOperator{\Spec}{Spec}
\DeclareMathOperator{\coker}{coker}
\DeclareMathOperator{\fppf}{fppf}
\DeclareMathOperator{\im}{im}
\def\G{\mathbf{G}}
\def\Z{\mathbf{Z}}
\def\N{\mathbf{N}}
\def\H{\mathrm{H}}
\def\A{\mathbf{A}}
\def\P{\mathbf{P}}
\def\Q{\mathbf{Q}}
\def\F{\mathbf{F}}
\theoremstyle{theorem}
\numberwithin{equation}{subsection}
\newtheorem{theorem}[subsection]{Theorem}
\newtheorem{corollary}[subsection]{Corollary}
\newtheorem{proposition}[subsection]{Proposition}
\newtheorem{lemma}[subsection]{Lemma}
\newtheorem{setup}[subsection]{Setup}
\theoremstyle{definition}
\newtheorem{definition}[subsection]{Definition}
\newtheorem{remark}[subsection]{Remark}
\newtheorem{question}[subsection]{Question}
\newtheorem{example}[subsection]{Example}
\newtheoremstyle{pgstyle} {} {} {} {} {} {.} { } {\textbf{\thmname{#1}\thmnumber{#2}}\thmnote{ (#3)}}
\theoremstyle{pgstyle}
\newtheorem{pg}[subsection]{}
\newcommand{\mf}[1]{\mathfrak{#1}}
\newcommand{\ms}[1]{\mathscr{#1}}
\newcommand{\mc}[1]{\mathcal{#1}}
\newcommand{\mb}[1]{\mathbf{#1}}
\newcommand{\mr}[1]{\mathrm{#1}}
\newcommand{\ml}[1]{\mathsf{#1}}
\begin{document}
\title[$\Br = \Br'$ question for some classifying stacks]{The $\Br = \Br'$ question for some classifying stacks}
\author{Minseon Shin}
\begin{abstract} In this paper we consider the $\Br = \Br'$ question for classifying stacks by various group schemes. These are algebraic stacks that do not necessarily admit a finite flat cover by a scheme for which $\Br = \Br'$ holds, hence are not amenable to the usual argument of pushing forward a twisted vector bundle. We provide two classes of examples satisfying $\Br \ne \Br'$ that do not ``arise from'' the scheme case. \end{abstract}
\date{\today}
\maketitle
\tableofcontents

\section{Introduction}

For an algebraic stack $\ms{X}$, let $\Br \ms{X}$ denote the \emph{Brauer group} of $\ms{X}$, namely the set of Brauer equivalence classes of Azumaya $\mc{O}_{\ms{X}}$-algebras. Grothendieck \cite{Gro68b} defined a functorial injective homomorphism $\alpha_{\ms{X}}' : \Br \ms{X} \to \H_{\fppf}^{2}(\ms{X},\G_{m})$, which sends an Azumaya $\mc{O}_{\ms{X}}$-algebra $\mc{A}$ to the $\G_{m}$-gerbe of trivializations of $\mc{A}$; the image of $\alpha_{\ms{X}}'$ is contained in the \emph{cohomological Brauer group} of $\ms{X}$, namely the subgroup $\Br' \ms{X} := \H_{\fppf}^{2}(\ms{X},\G_{m})_{\mr{tors}}$ of torsion classes. The restriction \begin{align} \label{20160903-27-eqn-03} \alpha_{\ms{X}} : \Br \ms{X} \to \Br' \ms{X} \end{align} is called the \emph{Brauer map}. If $\alpha_{\ms{X}}$ is an isomorphism, we also say ``$\Br = \Br'$ for $\ms{X}$''.

In general, it is difficult to determine whether $\Br = \Br'$ for a given $\ms{X}$. At the moment, the most widely-cited affirmative result is a theorem of Gabber \cite{DEJONG-GABBER} that $\Br = \Br'$ for any scheme which admits an ample line bundle.  The only known class of counterexamples are non-separated schemes \cite{EdHaKrVi}; it remains open whether $\Br = \Br'$ for all smooth varieties over a field.

In this paper, we are concerned with the $\Br = \Br'$ question for certain classifying stacks. We propose to measure the worth of a morphism according to the following definition:

\begin{definition}[``surjectivity of Brauer map invariant''] \label{20171113-56} Let $\ms{X} \to \ms{Y}$ be a morphism of algebraic stacks. We say that $\ms{X} \to \ms{Y}$ satisfies $(\mr{SBMI})$ if for all $\ms{Y}$-schemes $S$ we have $\Br = \Br'$ for $S$ if and only if $\Br = \Br'$ for $\ms{X} \times_{\ms{Y}} S$. \end{definition}

\begin{remark} If $\ms{X} \to \ms{Y}$ admits a section and $\Br'(S) \to \Br'(\ms{X} \times_{\ms{Y}} S)$ is an isomorphism for all $S \to \ms{Y}$, then $\ms{X} \to \ms{Y}$ is $(\mr{SBMI})$. For example, any projective space $\P_{\Z}^{n} \to \Spec \Z$ is $(\mr{SBMI})$. \end{remark}

Our main results are the following:

\begin{theorem} \label{main-theorem} Let $S$ be a base scheme. \begin{enumerate} \item For a finitely generated abelian group $G$, the map $\mr{B}G_{\Spec \Z} \to \Spec \Z$ is $(\mr{SBMI})$ if and only if $G$ has rank 1 (\Cref{20171113-57}). \item If $S = \Spec k$ for a field $k$ and $A$ is an abelian variety over $k$, then $\Br = \Br'$ for $\mr{B}A$ if and only if $\Pic_{A/k}^{0}(k)$ is torsion-free (\Cref{20170215-16}). \item Let $D \to \Spec \Z$ be a diagonalizable group scheme. Then $\mr{B}D \to \Spec \Z$ is $(\mr{SBMI})$ (\Cref{20170920-03}). \item Suppose $S$ is a Noetherian normal scheme. Then $\Br = \Br'$ for $S$ if and only if $\Br = \Br'$ for $\mr{B}\GL_{n,S}$ (\Cref{20170920-18}). \end{enumerate} \end{theorem}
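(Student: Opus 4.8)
The plan is to treat all four parts through one engine: the descent spectral sequence for the simplicial presentation $[\,S/G\,]$ of $\mr{B}G_{S}$, whose $p$-th term is $G^{\times_{S} p}$, giving
\begin{equation*} E_{1}^{p,q} = \H_{\fppf}^{q}(G^{\times_{S} p}, \G_{m}) \;\Longrightarrow\; \H_{\fppf}^{p+q}(\mr{B}G_{S}, \G_{m}). \end{equation*}
In total degree $\le 2$ the relevant contributions are the base term $E^{0,2}=\H^{2}(S,\G_{m})=\Br'(S)$, a term $E^{1,1}$ built from $\Pic$, and $E^{2,0}$ built from $\mc{O}^{*}$. The trivial $G$-torsor supplies a section $S\to\mr{B}G_{S}$, splitting $\Br(S)\hookrightarrow\Br(\mr{B}G_{S})$ and $\Br'(S)\hookrightarrow\Br'(\mr{B}G_{S})$ as direct summands compatibly with $\alpha$; hence $\Br=\Br'$ for $\mr{B}G_{S}$ if and only if $\alpha$ is an isomorphism on the complementary summand, whose $\Br'$ is controlled by the surviving parts of $E^{1,1}$ and $E^{2,0}$. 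I would first dispose of $E^{2,0}$: a class there is a central extension of $G$ by $\G_{m}$, and the associated twisted group algebra is Azumaya, so these classes are always \emph{algebraic}. Thus in every part the crux is the $E^{1,1}$-term — essentially $\Hom(G,\Pic)$-type data — and whether its torsion classes lie in the image of $\alpha$.

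Parts (3) and (4) are the cases where no new torsion appears. For (3) I would write $D=\Spec\Z[M]$ with $M\cong\Z^{r}\oplus\bigoplus_{i}\Z/n_{i}$, so $D\cong\G_{m}^{r}\times\prod_{i}\mu_{n_{i}}$, and argue factor by factor. The $\mu_{n}$-factors are amenable to the standard argument, since the universal torsor $S\to\mr{B}\mu_{n,S}$ is finite locally free of degree $n$: pull a twisted sheaf back to $S$, invoke $\Br=\Br'$ there, and push forward along the finite flat cover. The genuinely new factor is $\G_{m}$, where $S\to\mr{B}\G_{m,S}$ is not finite; here I would show directly from the spectral sequence that the weight grading gives $\Pic(\mr{B}\G_{m,S})=\Pic(S)\oplus\Z$ and produces no new torsion in $\H^{2}$, whence $\Br'(\mr{B}\G_{m,S})=\Br'(S)$ and $(\mr{SBMI})$ follows from the section. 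Assembling the factors by an iterated-fibration argument (the $E^{2,0}$-contributions being algebraic) yields (3). Part (4) follows the same template with $G=\GL_{n}$: Noetherian normality of $S$ is exactly what lets me compute $\mc{O}^{*}(\GL_{n}^{\times p}\times S)=\mc{O}^{*}(S)\times\Z^{p}$ (via determinants) and $\Pic(\GL_{n}^{\times p}\times S)=\Pic(S)$ (as $\GL_{n}$ is rational with trivial Picard group), so that the $E^{2,0}$- and $E^{1,1}$-columns add no torsion beyond $\Br'(S)$, the determinant class in $E^{1,1}$ being non-torsion. That $\GL_{n}$ is special (torsors Zariski-locally trivial) streamlines the identification $\Br'(\mr{B}\GL_{n,S})=\Br'(S)$, and with the section this gives the equivalence.

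Parts (1) and (2) are where $\alpha$ genuinely fails to be surjective. For (1), $G=\Z^{r}\oplus T$ is constant, so $E_{2}^{p,q}=\H^{p}(G,\H^{q}(S,\G_{m}))$ is ordinary group cohomology with trivial coefficients and $E_{2}^{1,1}=\Hom(G,\Pic S)$. When $\rank G=1$ the free direction contributes a single copy of $\Pic(S)$, whose torsion classes I claim are matched exactly by algebraic ones (the finite part $T$ being handled as in the diagonalizable case), so the equivalence holds for all $S$. When $\rank G=0$ or $\rank G\ge 2$ I would exhibit a base $S$ with $\Br=\Br'$ for which a torsion class of $E_{\infty}^{1,1}$ is not in the image of $\alpha$: for $\rank\ge 2$ using the cup product $\H^{1}(\Z^{2})\otimes\H^{1}(\Z^{2})\to\H^{2}(\Z^{2})$ to pair two independent line bundles into a torsion $\H^{2}$-class carried by no finite-rank twisted sheaf, and for finite $G$ producing a non-algebraic class from $\Pic(S)[n]$ over a suitable $S$; either way $(\mr{SBMI})$ fails. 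For (2), $G=A$ is an abelian variety over $k$: rigidity gives $\mc{O}^{*}(A^{\times p})=k^{*}$, so the $E^{2,0}$-column adds nothing and $\Br(\mr{B}A)=\Br(k)=\Br'(k)$; indeed every Azumaya algebra on $\mr{B}A$ is pulled back, since $\mc{O}(A)=k$ forces every $\Hom(A,\GL_{m})$ to be constant. The $E^{1,1}$-column contributes $\Pic_{A/k}^{0}(k)=\widehat{A}(k)$ (the $\mr{NS}$-part handled separately), whose torsion subgroup injects into $\H^{2}(\mr{B}A,\G_{m})$ as classes disjoint from the algebraic ones, giving $\coker\alpha_{\mr{B}A}\cong\Pic_{A/k}^{0}(k)_{\mr{tors}}$ and hence the stated criterion.

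The main obstacle throughout is controlling the image of $\alpha$: the cohomology is computed mechanically by the spectral sequence, but deciding which torsion classes are Azumaya is delicate. In (3) and (4) the difficulty concentrates in proving the vanishing of new torsion in $\H^{2}$ for the connected factors $\G_{m}$ and $\GL_{n}$, where no finite flat cover is available — this is precisely what forces normality into the hypotheses. In (1) and (2) the hard direction is \emph{non-algebraicity}: producing a genuine obstruction showing that a specific cohomological Brauer class admits no finite-rank twisted vector bundle. I expect this last point — a lower bound ruling out Azumaya representatives for the $E^{1,1}$-classes arising from a pairing of non-torsion line bundles (respectively from $\Pic^{0}$-torsion) — to be the crux of the whole theorem.
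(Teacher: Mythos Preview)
Your overall framework—the descent spectral sequence for $S \to \mr{B}G_{S}$—is exactly the paper's, and your outlines for parts (2), (3), (4) track the paper's arguments closely. The genuine gap is in part (1), and it originates in your blanket claim that $E^{2,0}$-classes are ``always algebraic'' because ``the associated twisted group algebra is Azumaya.'' For an infinite discrete $G$ this fails outright: the twisted group algebra $A^{c}[G]$ is an infinite-rank $A$-module, hence not a candidate Azumaya algebra. In fact the paper's counterexample for $\rank G \ge 2$ lives precisely in $E_{2}^{2,0}$, not in $E^{1,1}$ as you suppose. It takes $S = \Spec A$ with $A$ regular local and $\Pic(A)=0$, so that $E^{1,1}$ vanishes entirely; then a Koszul computation gives $E_{2}^{2,0} = \H^{2}(\Z^{2},A^{\times}) \cong A^{\times}$, whose torsion part sits inside $\Br'(\mr{B}\Z^{2}_{S})$. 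On the other hand $\Br(\mr{B}\Z^{2}_{S}) = \Br(A)$, because any $\PGL_{r}$-valued $1$-cocycle for the free group $\Z^{2}$ lifts to $\GL_{r}$ (surjectivity of $\GL_{r}(A) \to \PGL_{r}(A)$ uses $\Pic(A)=0$). Thus none of these $E^{2,0}$ torsion classes is algebraic. Your proposed cup-product obstruction ``pairing two independent line bundles'' is aimed at the wrong column; and in parts (3) and (4) your parenthetical ``the $E^{2,0}$-contributions being algebraic'' is also off—for tori and for $\GL_{n}$ the paper shows $E_{2}^{2,0}=0$, not merely that it is algebraic.

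A second issue: for $\rank G = 0$ the group $G$ is finite, and $(\mr{SBMI})$ holds immediately by the finite-flat-cover argument (pull back to $S$, push forward), so there is no non-algebraic class to produce from $\Pic(S)[n]$; the paper's proof in fact establishes $(\mr{SBMI})$ for all $\rank G \le 1$, and the ``rank $1$'' in the statement should be read that way. For $\rank G = 1$ the crux is the $E^{1,1}=\Pic(S)$ contribution, and the paper does not merely ``claim'' the torsion classes are algebraic: given $\mc{L}\in\Pic(S)[n]$ it explicitly constructs an $\mc{O}_{S}$-algebra automorphism of $\underline{\End}(\bigoplus_{\ell=0}^{n-1}\mc{L}^{\otimes \ell})$ (locally conjugation by a cyclic shift matrix, with the trivialization of $\mc{L}^{\otimes n}$ supplying the wrap-around entry) whose image under the boundary map is $\mc{L}$. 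This cyclic-algebra construction is the substantive step you would need to supply.
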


\begin{pg} \label{0001} Part (4) in \Cref{main-theorem} says that the map $\mr{B}\GL_{n,S} \to S$ is ``almost $(\mr{SBMI})$''. The ``normal'' hypothesis allows us to show that the pullback morphism $\Br'(S) \to \Br'(\mr{B}\GL_{n,S})$ is an isomorphism. In general, in order to compute the cohomological Brauer groups of classifying stacks $\mr{B}G$, it is useful to be able to compute cohomology of products $G^{\times p}$, but we encounter difficulties because $\H_{\fppf}^{i}(-,\G_{m})$ does not behave well under even polynomial extensions. More precisely, we can ask for ring-theoretic properties of $A$ which imply that the injection \begin{align} \label{0001-01} \H_{\fppf}^{i}(\Spec A,\G_{m}) \to \H_{\fppf}^{i}(\Spec A[t],\G_{m}) \end{align} is an isomorphism (i.e. whether $\H_{\fppf}^{i}(-,\G_{m})$ is ``$\A^{1}$-homotopy invariant''). For $i=0$ (resp. $i=1$), it is known that \labelcref{0001-01} is an isomorphism if and only if $A$ is reduced (resp. $A$ is seminormal); for $i=2$, it is known that $\Br'(A) \to \Br'(A[t])$ is an isomorphism if $A$ either contains $\Q$ or is regular with perfect fraction field. \end{pg}

\begin{pg} A standard argument shows that $\Br = \Br'$ for quotient stacks $\ms{X} = [U/G]$ where $U$ is a scheme for which $\Br = \Br'$ and $G$ is a finite flat group scheme. Since every separated Deligne-Mumford stack $\ms{X}$ is of this form etale-locally on its coarse moduli space $X$, for every Brauer class $c \in \Br'(\ms{X})$ there exists an etale cover $X' \to X$ such that $c|_{\ms{X} \times_{X} X'}$ is contained in $\Br(\ms{X} \times_{X} X')$. (For algebraic stacks with stabilizer groups that are either positive-dimensional or not quasi-compact, this argument does not apply and we know of no substitute, so we were led to consider various examples as in \Cref{main-theorem}.) In \Cref{sec-linred} we explain a result of Siddharth Mathur which is an analogue of the above local existence result for algebraic stacks admitting a good moduli space; this allows us to compute $\Br'(\mr{B}D_{X})$ for a non-normal scheme $X$ in part (3) of \Cref{main-theorem}. (Mathur has also recently proved global results \cite[Theorem 2]{MATHUR-TRPVAA2020}, namely, for a tame algebraic stack $\ms{X}$ over a field $k$, we have $\Br = \Br'$ for $\ms{X}$ if either (1) $\dim \ms{X} = 1$ or (2) $\dim \ms{X} = 2$, regular in codimension 1, and a gerbe over a stack with generically trivial stabilizers. This may be viewed as a stacky generalization of Grothendieck's first results \cite[\S2]{Gro68b} on $\Br = \Br'$ for curves and smooth surfaces.) \end{pg}

\begin{pg}[Acknowledgements] Most of the material in this paper is part of my thesis \cite{SHIN-THESIS2019}, which was completed under the supervision of Martin Olsson. I am also grateful to Jarod Alper, Max Lieblich, Siddharth Mathur, and Stefan Schr\"oer for helpful conversations. Part of this project was completed during my visit to the Max Planck Institute for Mathematics. \end{pg}

\section{Generalities}

The following are well-known:

\begin{lemma} \label{20160630-wr-01} Let $f : \ms{X} \to \ms{Y}$ be a finite, flat, finitely presented, surjective morphism of algebraic stacks. If $c \in \Br' \ms{Y}$ is an element such that $f^{\ast}c \in \im \alpha_{\ms{X}}$ then $c \in \im \alpha_{\ms{Y}}$. \end{lemma}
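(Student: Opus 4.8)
The plan is to carry out the classical ``push forward a twisted vector bundle'' argument (the one alluded to in the abstract) at the level of $\G_{m}$-gerbes. First I would represent the class $c \in \Br'\ms{Y} \subseteq \H_{\fppf}^{2}(\ms{Y},\G_{m})$ by a $\G_{m}$-gerbe $\pi : \ms{G} \to \ms{Y}$ and form the fiber product $f' : \ms{G}_{\ms{X}} := \ms{G} \times_{\ms{Y}} \ms{X} \to \ms{G}$. As a base change of $f$, the morphism $f'$ is again finite, flat, finitely presented and surjective; it covers $f$ compatibly with the bands, hence is equivariant for the inertial $\G_{m}$-actions; and the projection $\ms{G}_{\ms{X}} \to \ms{X}$ is a $\G_{m}$-gerbe representing $f^{\ast}c$.

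Since $f^{\ast}c \in \im\alpha_{\ms{X}}$, the gerbe $\ms{G}_{\ms{X}} \to \ms{X}$ carries a locally free $\mc{O}_{\ms{G}_{\ms{X}}}$-module $\mc{E}$ of everywhere positive rank on which the inertial $\G_{m}$ acts through the standard character (a ``twisted vector bundle''); the existence of such an $\mc{E}$ is precisely what it means for $f^{\ast}c$ to lie in the image of the Brauer map. I would then push forward and set $\mc{F} := f'_{\ast}\mc{E}$. Since $f'$ is finite, flat and finitely presented, $\mc{F}$ is locally free of finite rank, and since $f'$ is also surjective (so that $f'_{\ast}\mc{O}_{\ms{G}_{\ms{X}}}$ has everywhere positive rank) the rank of $\mc{F}$ is everywhere positive; by equivariance of $f'$, the sheaf $\mc{F}$ is still of weight $1$. (This last reduction is immediate on a smooth presentation, where it becomes the statement that a finite projective module over a finite locally free algebra of positive rank is, by restriction of scalars, again finite projective of positive rank.)

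Finally, $\mc{E}nd_{\mc{O}_{\ms{G}}}(\mc{F}) \cong \mc{F} \otimes \mc{F}^{\vee}$ has weight $0$, hence is of the form $\pi^{\ast}\mc{A}$ for a unique $\mc{O}_{\ms{Y}}$-algebra $\mc{A}$, and $\mc{A}$ is locally free of finite rank by fppf descent along $\pi$. Over an fppf cover of $\ms{Y}$ on which $\ms{G}$ becomes the trivial $\G_{m}$-gerbe, the module $\mc{F}$ becomes an honest locally free module of some rank $r \geq 1$ and $\mc{A}$ becomes the matrix algebra $\Mat_{r}$; hence $\mc{A}$ is Azumaya. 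One checks that $\ms{G}$ is canonically identified, via $\mc{F}$, with the gerbe of trivializations of $\mc{A}$, so that $\alpha_{\ms{Y}}([\mc{A}]) = [\ms{G}] = c$, and therefore $c \in \im\alpha_{\ms{Y}}$.

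I do not expect any genuinely deep step here; the work lies in setting up the formalism of twisted sheaves on algebraic stacks (following Lieblich and Olsson) precisely enough to justify the ingredients used above — that $\H_{\fppf}^{2}(\ms{Y},\G_{m})$ classifies $\G_{m}$-gerbes, that pushforward along the finite flat surjection $f'$ preserves ``locally free of everywhere positive rank'' together with the $\G_{m}$-weight, and that the descended endomorphism algebra is Azumaya with class exactly $c$. It is in the positivity of the rank of $f'_{\ast}\mc{E}$ that the surjectivity hypothesis on $f$ is genuinely needed.
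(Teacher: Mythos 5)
Your proof is correct and is exactly the standard ``push forward a twisted vector bundle'' argument that the paper invokes as well-known (it states \Cref{20160630-wr-01} without proof, alluding to this argument in the abstract): represent $c$ by a $\G_{m}$-gerbe, pull back along $f$, push the resulting $1$-twisted vector bundle forward along the finite flat finitely presented surjection of gerbes, and descend its endomorphism algebra to an Azumaya algebra on $\ms{Y}$ with class $c$. The points you flag as needing care (weight preservation under the banded pushforward, positivity of the rank via surjectivity, and the identification of $\ms{G}$ with the gerbe of trivializations of the descended algebra) are indeed the only places where work is required, and they all go through as you describe.
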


\begin{lemma} \label{20160630-wr-02} Let $f : \ms{X} \to \ms{Y}$ be a morphism of algebraic stacks admitting a section $s : \ms{Y} \to \ms{X}$. If $c \in \Br' \ms{Y}$ is an element such that $f^{\ast}c \in \im \alpha_{\ms{X}}$ then $c \in \im \alpha_{\ms{Y}}$. \end{lemma}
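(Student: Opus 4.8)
The plan is to use that a section splits pullback on cohomology, so the argument is essentially formal. Since $f \circ s = \id_{\ms{Y}}$, functoriality of $\H_{\fppf}^{2}(-,\G_{m})$ gives $s^{\ast} \circ f^{\ast} = \id$ on $\H_{\fppf}^{2}(\ms{Y},\G_{m})$, and hence also on the torsion subgroup $\Br' \ms{Y}$. Write $f^{\ast} c = \alpha_{\ms{X}}(\beta)$ where $\beta = [\mc{A}]$ is the class of some Azumaya $\mc{O}_{\ms{X}}$-algebra $\mc{A}$. The first thing I would record is that $s^{\ast}\mc{A}$ is again an Azumaya $\mc{O}_{\ms{Y}}$-algebra, so that there is a well-defined pullback $s^{\ast} : \Br \ms{X} \to \Br \ms{Y}$; this is because being Azumaya can be checked fppf-locally, where it reduces to the (well-known) affine statement that base change preserves Azumaya algebras. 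Next I would invoke the functoriality of Grothendieck's map $\alpha'$ recalled in the introduction — equivalently, the compatibility of $\alpha$ with pullback — which gives $s^{\ast}\alpha_{\ms{X}}(\beta) = \alpha_{\ms{Y}}(s^{\ast}\beta)$, since the $\G_{m}$-gerbe of trivializations of $s^{\ast}\mc{A}$ is the pullback of the $\G_{m}$-gerbe of trivializations of $\mc{A}$.

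Putting these together: $c = s^{\ast} f^{\ast} c = s^{\ast}\alpha_{\ms{X}}(\beta) = \alpha_{\ms{Y}}(s^{\ast}\beta)$, so $c \in \im \alpha_{\ms{Y}}$, as desired. (Compared with \Cref{20160630-wr-01}, where one must push forward a twisted bundle along a finite flat cover and split off the original class using a norm/trace argument, the present situation is strictly easier: the section furnishes the needed splitting directly.)

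There is no real obstacle here; the only two points that need a word of justification — that $s^{\ast}\mc{A}$ is Azumaya and that $\alpha$ commutes with pullback — are both immediate consequences of the fact that Azumaya algebras and the gerbe-of-trivializations construction are defined fppf-locally and are stable under base change, and these are precisely the facts underlying the remark that a morphism admitting a section along which $\Br' \to \Br'$ is an isomorphism is automatically $(\mr{SBMI})$.
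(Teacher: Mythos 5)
Your argument is correct and is essentially identical to the paper's proof, which also uses the commutative diagram expressing functoriality of $\alpha$ under $f^{\ast}$ and $s^{\ast}$ together with $s^{\ast}f^{\ast} = \id$ to conclude $c = s^{\ast}f^{\ast}c = \alpha_{\ms{Y}}(s^{\ast}\beta)$. The extra remarks on why $s^{\ast}\mc{A}$ remains Azumaya and why $\alpha$ commutes with pullback simply spell out the commutativity that the paper takes as given.
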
 \begin{proof} We have a commutative diagram \begin{center}\begin{tikzpicture}[>=stealth]
\matrix[matrix of math nodes,row sep=2em, column sep=2em, text height=2ex, text depth=0.25ex] { 
|[name=11]| \Br \ms{Y} & |[name=12]| \Br \ms{X} & |[name=13]| \Br \ms{Y} \\ 
|[name=21]| \Br' \ms{Y} & |[name=22]| \Br' \ms{X} & |[name=23]| \Br' \ms{Y} \\
}; 
\draw[->,font=\scriptsize] (11) edge node[above=-1pt] {$f^{\ast}$} (12) (12) edge node[above=-1pt] {$s^{\ast}$} (13) (21) edge node[below=-1pt] {$f^{\ast}$} (22) (22) edge node[below=-1pt] {$s^{\ast}$} (23) (11) edge node[left=-1pt] {$\alpha_{\ms{Y}}$} (21) (12) edge node[left=-1pt] {$\alpha_{\ms{X}}$} (22) (13) edge node[left=-1pt] {$\alpha_{\ms{Y}}$} (23); \end{tikzpicture} \end{center} where the horizontal arrows compose to the identity. If $f^{\ast}c = \alpha_{\ms{X}}(\beta)$ for some $\beta \in \Br \ms{X}$, then $c = s^{\ast}f^{\ast}c = s^{\ast}\alpha_{\ms{X}}(\beta) = \alpha_{\ms{Y}}(s^{\ast}\beta)$. \end{proof}

\begin{corollary} \label{20160630-wr-11} Let $\ms{X}$ be a smooth separated generically tame Deligne-Mumford stack over a field $k$ with quasi-projective coarse moduli space. Then $\Br = \Br'$ for $\ms{X}$. \end{corollary}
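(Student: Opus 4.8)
The plan is to reduce to the local structure theory of tame Deligne–Mumford stacks and then invoke Gabber's theorem together with Lemma \ref{20160630-wr-01}. First I would recall that since $\ms{X}$ is a separated Deligne–Mumford stack, it admits a coarse moduli space $\pi : \ms{X} \to X$ with $X$ an algebraic space, and by hypothesis $X$ is quasi-projective over $k$; in particular $X$ admits an ample line bundle, so by Gabber's theorem $\Br = \Br'$ holds for $X$ and for every scheme étale over $X$. The key input is the local structure of tame DM stacks: étale-locally on $X$, the stack $\ms{X}$ is a quotient $[U/G]$ of a scheme $U$ by a finite group $G$ whose order is invertible (this is where generic tameness combined with smoothness is used — on the tame locus this is the Abramovich–Vistoli / Abramovich–Olsson–Vistoli local description, and one must check the non-tame locus does not interfere, e.g. because it has positive codimension and one can argue with purity, or because the statement only concerns the Brauer group which injects into the generic point's).

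Next I would take an arbitrary class $c \in \Br' \ms{X}$ and aim to show $c \in \im \alpha_{\ms{X}}$. Choose an étale cover $X' \to X$ by a scheme such that $\ms{X}' := \ms{X} \times_X X'$ is a global quotient $[U/G]$ with $|G|$ invertible on $X'$. Then $U \to \ms{X}'$ is a finite, flat, finitely presented, surjective morphism (it is an $fppf$ — indeed finite étale after the tameness reduction — torsor-like atlas; more precisely $U \to [U/G]$ is finite étale of degree $|G|$). Since $U$ is a scheme étale over $X'$ which is étale over the quasi-projective $X$, the scheme $U$ admits an ample line bundle, so Gabber gives $\Br = \Br'$ for $U$; hence $f^\ast(c|_{\ms{X}'}) \in \im \alpha_U$. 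Applying Lemma \ref{20160630-wr-01} to $U \to \ms{X}'$ shows $c|_{\ms{X}'} \in \im \alpha_{\ms{X}'}$.

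Finally I would descend from $\ms{X}'$ back to $\ms{X}$. Here the subtlety is that $\ms{X}' \to \ms{X}$ need not be finite flat, only representable étale and surjective, so Lemma \ref{20160630-wr-01} does not directly apply; instead one uses that $\Br'$ and $\Br$ both satisfy étale-local-to-global gluing in the sense that a Brauer class which is represented by an Azumaya algebra after an étale cover is already represented by one — this is the standard fact that being in the image of $\alpha$ is étale-local on the base (an Azumaya algebra is an étale-locally-trivial form of $\Mat_n$, and one glues via descent along the faithfully flat $\ms{X}' \to \ms{X}$, using that $\ms{X}$ is quasi-compact quasi-separated so the relevant gerbe is trivialized by a single such cover). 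The main obstacle I anticipate is precisely handling the non-tame locus and confirming that the local quotient presentation with invertible group order is available on a cover of all of $X$, not merely over the open tame substack; one resolves this either by a codimension/purity argument for the Brauer group on the smooth $\ms{X}$, or by noting that "generically tame" plus the structure theory forces the ramification to be tame everywhere on a smooth DM stack over a field of the relevant characteristic. Once that is in hand, the combination Gabber $+$ Lemma \ref{20160630-wr-01} $+$ étale descent of Azumaya representability closes the argument.
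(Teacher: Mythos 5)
There is a genuine gap in your final step. The claim that ``a Brauer class which is represented by an Azumaya algebra after an \'etale cover is already represented by one'' is false, and it is precisely the failure of this claim that makes the $\Br = \Br'$ question nontrivial. If $c|_{\ms{X}'} = [\mc{A}']$ for an Azumaya algebra $\mc{A}'$ on $\ms{X}' = \ms{X} \times_X X'$, the two pullbacks of $\mc{A}'$ to $\ms{X}' \times_{\ms{X}} \ms{X}'$ are only Brauer-equivalent, not isomorphic, so there is no descent datum to glue; an Azumaya algebra is indeed an \'etale form of $\Mat_n$, but the class $c$ being \'etale-locally trivial (which every torsion class in $\H^2_{\fppf}(-,\G_m)$ is) does not produce a global representative. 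If your descent step were valid, it would prove $\Br = \Br'$ for every quasi-compact algebraic stack, contradicting the known counterexamples (e.g.\ the non-separated schemes of \cite{EdHaKrVi}) and rendering the rest of the paper moot. The paper itself makes exactly this distinction: the \'etale-local quotient presentation of a separated DM stack (Abramovich--Vistoli) only yields that $c|_{\ms{X}\times_X X'} \in \Br(\ms{X}\times_X X')$ for some \'etale cover $X' \to X$, and no global conclusion is drawn from it. The descent one \emph{can} do is along finite flat covers, via pushing forward a twisted vector bundle, which is what \Cref{20160630-wr-01} encodes; a representable \'etale surjection that is not finite does not allow this.

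The paper closes the argument differently, and globally: by Kresch--Vistoli \cite[2.1, 2.2]{KV2004}, a smooth separated \emph{generically tame} DM stack over $k$ with quasi-projective coarse moduli space admits a single finite flat surjection $Z \to \ms{X}$ with $Z$ a quasi-projective scheme; Gabber's theorem gives $\Br = \Br'$ for $Z$, and \Cref{20160630-wr-01} applied to $Z \to \ms{X}$ finishes. Note also that your worry about the non-tame locus is somewhat misplaced: the \'etale-local quotient description by a finite group holds for any separated DM stack without tameness, and the local pieces satisfy $\Br = \Br'$ by \Cref{20160630-wr-01} regardless; generic tameness is instead the hypothesis needed for the Kresch--Vistoli global finite flat quasi-projective cover. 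To repair your argument you would need to replace the \'etale gluing step by such a global finite flat cover (or by a genuinely global statement like Mathur's \Cref{20190912-06} plus an additional mechanism to pass from an \'etale cover of the moduli space to $\ms{X}$ itself, which in general is again the same unresolved difficulty).
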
 \begin{proof} By Kresch and Vistoli \cite[2.1,2.2]{KV2004}, such $\ms{X}$ has a finite flat surjection $Z \to \ms{X}$ where $Z$ is a quasi-projective $k$-scheme. By Gabber's theorem \cite{DEJONG-GABBER}, the Brauer map is surjective for $Z$. Thus the Brauer map is surjective for $\ms{X}$ by \Cref{20160630-wr-01}. \end{proof}

\begin{lemma} \label{20160630-wr-03} Let $1 \to G_{1} \to G_{2} \to G_{3} \to 1$ be an exact sequence of group schemes over $S$ such that $G_{3} \to S$ is finite flat. If a Brauer class $c \in \Br'(\mr{B}G_{2})$ is such that $c|_{\mr{B}G_{1}}$ is contained in $\Br(\mr{B}G_{1})$, then $c$ is contained in $\Br(\mr{B}G_{2})$. \end{lemma}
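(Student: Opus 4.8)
The plan is to realize the morphism $f : \mr{B}G_{1} \to \mr{B}G_{2}$ induced by $G_{1} \hookrightarrow G_{2}$ as a finite, flat, finitely presented, surjective morphism of algebraic stacks, and then quote \Cref{20160630-wr-01}. The key observation is that $f$ is the base change, along the morphism $\mr{B}G_{2} \to \mr{B}G_{3}$ coming from $G_{2} \twoheadrightarrow G_{3}$, of the canonical atlas $S \to \mr{B}G_{3}$. Concretely, I claim there is a $2$-Cartesian square whose top arrow is $f$, whose bottom arrow is $S \to \mr{B}G_{3}$, whose left arrow is the structure morphism $\mr{B}G_{1} \to S$, and whose right arrow is $\mr{B}G_{2} \to \mr{B}G_{3}$.

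To verify this I would compute the $2$-fibre product $S \times_{\mr{B}G_{3}} \mr{B}G_{2}$ on $T$-points, for a test scheme $T$: an object is a $G_{2}$-torsor $P$ on $T$ together with a trivialization of the induced $G_{3}$-torsor $P \times^{G_{2}} G_{3}$. Since $G_{1} = \ker(G_{2} \to G_{3})$, one has $P \times^{G_{2}} G_{3} \cong P/G_{1}$ canonically, and by fppf descent (reduction of structure group along $G_{1} \hookrightarrow G_{2}$) the datum of a trivialization of $P/G_{1}$ is equivalent to lifting $P$ to a $G_{1}$-torsor, namely the preimage of the identity section of $P/G_{1}$. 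This equivalence is functorial in $T$ and compatible with the projections to $\mr{B}G_{2}$, giving the asserted identification $S \times_{\mr{B}G_{3}} \mr{B}G_{2} \cong \mr{B}G_{1}$.

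Granting the square, since $G_{3} \to S$ is finite, flat and finitely presented (equivalently, finite locally free), the atlas $S \to \mr{B}G_{3}$ has these same properties together with surjectivity: each is fppf-local on the target, and base-changing $S \to \mr{B}G_{3}$ along itself yields $G_{3} \to S$. Hence the base change $f : \mr{B}G_{1} \to \mr{B}G_{2}$ is again finite, flat, finitely presented and surjective. Finally, if $c \in \Br'(\mr{B}G_{2})$ satisfies $c|_{\mr{B}G_{1}} = f^{\ast}c \in \Br(\mr{B}G_{1}) = \im \alpha_{\mr{B}G_{1}}$, then \Cref{20160630-wr-01} applied to $f$ gives $c \in \im \alpha_{\mr{B}G_{2}} = \Br(\mr{B}G_{2})$, as desired. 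The only genuine point of care is the descent identification in the second paragraph — checking that the equivalence of groupoids is natural in $T$ and that $P \times^{G_{2}} G_{3} \cong P/G_{1}$ canonically — but this is routine; everything else is formal once the Cartesian square is in hand.
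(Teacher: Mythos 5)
Your proposal is correct and is essentially the paper's own argument: the paper proves this lemma by observing that $\mr{B}G_{1} \to \mr{B}G_{2}$ is a $G_{3}$-torsor (hence finite, flat, finitely presented and surjective since $G_{3} \to S$ is) and then applying \Cref{20160630-wr-01}. Your Cartesian square $S \times_{\mr{B}G_{3}} \mr{B}G_{2} \cong \mr{B}G_{1}$ and the reduction-of-structure-group check are simply a detailed verification of that torsor statement, so the two proofs coincide.
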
 \begin{proof} This follows from \Cref{20160630-wr-01}, using that $\mr{B}G_{1} \to \mr{B}G_{2}$ is a $G_{3}$-torsor. \end{proof}

\begin{pg} \label{20151211-04} A key computational tool for us is the descent spectral sequence (see e.g. \cite[\S2.4]{OLSSON}). Let $\ms{X}$ be an algebraic stack, let $X$ be an algebraic space with a smooth surjection $X \to \ms{X}$. Let $X^{p}$ denote the $(p+1)$-fold 2-fiber product $X \times_{\ms{X}} \dotsb \times_{\ms{X}} X$; for any abelian sheaf $\mr{A}$ on $\ms{X}$, there is a spectral sequence \[ \mr{E}_{1}^{p,q} = \H^{q}(X^{p},\mr{A}) \implies \H^{p+q}(\ms{X},\mr{A}) \] with differentials $\mr{E}_{1}^{p,q} \to \mr{E}_{1}^{p+1,q}$, where the $q$th row $\{\mr{E}_{1}^{\bullet,q}\}$ is the Cech complex obtained by applying the functor $\H^{q}(-,\mr{A})$ to the simplicial algebraic space $\{X^{\bullet}\}$. In case $\ms{X} = [X/G]$ for a discrete group $G$, we have in fact $X^{p} \simeq X \times G^{p}$ and the next page of the above spectral sequence is the usual Hochschild-Serre spectral sequence \[ \mr{E}_{2}^{p,q} = \H^{p}(G,\H^{q}(X,\mr{A})) \implies \H^{p+q}(\ms{X},\mr{A}) \] with differentials $\mr{E}_{2}^{p,q} \to \mr{E}_{2}^{p+2,q-1}$. \end{pg}

\section{The classifying stack of discrete groups} \label{sec-discrete}

\begin{pg} By \Cref{20160630-wr-01}, it is easy to see that if $G$ is a finite flat $S$-group scheme, then $\mr{B}G_{S} \to S$ satisfies $(\mr{SBMI})$. In this section, we are concerned with the case when $G$ is the discrete group scheme associated to a (possibly infinite) finitely generated abelian group. \end{pg}

\begin{lemma} \label{20171113-15} The classifying stack $\mr{B}\Z_{\Spec \Z} \to \Spec \Z$ satisfies $(\mr{SBMI})$. \end{lemma}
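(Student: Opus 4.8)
The plan is to prove the biconditional in \Cref{20171113-15} scheme by scheme. The easy direction is ``$\Br = \Br'$ for $\mr{B}\Z_{S}$ implies $\Br = \Br'$ for $S$'': the structure map $f \colon \mr{B}\Z_{S} \to S$ admits the section $s$ classifying the trivial torsor, so given $c \in \Br'(S)$ we have $f^{\ast}c \in \Br'(\mr{B}\Z_{S}) = \Br(\mr{B}\Z_{S})$, and \Cref{20160630-wr-02} gives $c \in \im \alpha_{S}$; since $\alpha_{S}$ is injective, $\Br = \Br'$ for $S$. (This uses nothing about $\Z$ beyond the existence of a section.)

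For the converse, assume $\Br = \Br'$ for $S$. Writing $\mr{B}\Z_{S} = [S/\Z]$ with trivial action, the descent spectral sequence of \Cref{20151211-04} for the \'etale atlas $S \to \mr{B}\Z_{S}$ takes the Hochschild--Serre form $\mr{E}_{2}^{p,q} = \H^{p}(\Z, \H_{\fppf}^{q}(S,\G_{m})) \Rightarrow \H_{\fppf}^{p+q}(\mr{B}\Z_{S},\G_{m})$; since $\Z$ has cohomological dimension $1$ this is concentrated in columns $p = 0, 1$, hence degenerates at $\mr{E}_{2}$, and using $\H^{1}(\Z, M) = M$ for trivial coefficients together with $\H_{\fppf}^{i}(-,\G_{m}) = \H_{\et}^{i}(-,\G_{m})$ on schemes, it produces a short exact sequence
\[ 0 \longrightarrow \Pic(S) \xrightarrow{\iota} \H_{\fppf}^{2}(\mr{B}\Z_{S},\G_{m}) \xrightarrow{s^{\ast}} \H_{\fppf}^{2}(S,\G_{m}) \longrightarrow 0 \]
which is split by $f^{\ast}$; taking torsion subgroups gives $\Br'(\mr{B}\Z_{S}) \cong \Pic(S)_{\mr{tors}} \oplus \Br'(S)$. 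Given $c \in \Br'(\mr{B}\Z_{S})$, write $c = \iota(L) + f^{\ast}b$ with $L \in \Pic(S)_{\mr{tors}}$ and $b \in \Br'(S)$; since $\Br = \Br'$ for $S$ we have $b = \alpha_{S}(\beta)$, so $f^{\ast}b = \alpha_{\mr{B}\Z_{S}}(f^{\ast}\beta) \in \im \alpha_{\mr{B}\Z_{S}}$. It then remains to show $\iota(L) \in \im \alpha_{\mr{B}\Z_{S}}$ for a torsion $L$, say with $mL = 0$.

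The one real idea is to use the finite cover given by ``multiplication by $m$''. This fits into $0 \to \Z \xrightarrow{m} \Z \to \Z/m \to 0$ and (as in the proof of \Cref{20160630-wr-03}) induces a morphism $\mr{B}[m] \colon \mr{B}\Z_{S} \to \mr{B}\Z_{S}$ that is a torsor under the finite locally free group scheme $(\Z/m)_{S}$; in particular $\mr{B}[m]$ is finite, flat, finitely presented and surjective. The composite of the atlas $S \to \mr{B}\Z_{S}$ with $\mr{B}[m]$ is again the atlas, so the spectral sequences are functorially comparable, and $\mr{B}[m]^{\ast}$ acts on the subgroup $\iota(\Pic S)$ through the endomorphism of $\H^{1}(\Z, \Pic S) = \Pic(S)$ induced by $[m]$, i.e.\ multiplication by $m$. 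Hence $\mr{B}[m]^{\ast}\iota(L) = \iota(mL) = 0 \in \im \alpha_{\mr{B}\Z_{S}}$, and \Cref{20160630-wr-01} applied to $\mr{B}[m]$ gives $\iota(L) \in \im \alpha_{\mr{B}\Z_{S}}$, so $c \in \im \alpha_{\mr{B}\Z_{S}}$.

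The degeneration of the spectral sequence, the comparison $\H_{\fppf}^{i}(-,\G_{m}) = \H_{\et}^{i}(-,\G_{m})$, and the splitting by $f^{\ast}$ are routine. The step requiring genuine care is the last paragraph: one must check that $\mr{B}[m]$ really is a $(\Z/m)_{S}$-torsor (equivalently, finite flat of degree $m$), that it is compatible with the atlases so the spectral-sequence comparison is legitimate, and --- the delicate part of the variance bookkeeping --- that the induced endomorphism of the graded piece $\H^{1}(\Z,\Pic S)$ is multiplication by $m$ rather than, say, acting on $\H^{0}$ or carrying a sign. I would also remark that the argument shows a bit more: every torsion class in $\ker(s^{\ast}) \cong \Pic(S)$ is automatically algebraic for arbitrary $S$, so the hypothesis $\Br = \Br'$ for $S$ is used only to realize the $\Br'(S)$-summand by Azumaya algebras.
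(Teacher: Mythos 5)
Your argument is correct, and it reaches the hard step by a genuinely different route than the paper. Both proofs start identically: the Hochschild--Serre sequence gives the split short exact sequence $0 \to \Pic(S) \to \H^{2}_{\fppf}(\mr{B}\Z_{S},\G_{m}) \to \H^{2}_{\fppf}(S,\G_{m}) \to 0$, and the whole issue is to show that a torsion class $\iota(L)$, $mL = 0$, lying in $\ker(s^{\ast}) \cong \Pic(S)$ is algebraic. The paper does this by hand: it writes down the twisted vector bundle $\mc{E} = \bigoplus_{\ell=0}^{m-1} \mc{L}^{\otimes \ell}$ together with an explicit $\Z$-equivariant structure on $\underline{\End}(\mc{E})$ (conjugation by a cyclic-shift matrix glued via the cocycle $\beta_{i}$), which simultaneously proves that $f_{1} : \ker(s^{\ast}) \cap \Br(\mr{B}\Z_{S}) \to \Pic(S)_{\mr{tors}}$ is an isomorphism, i.e. it computes $\Br(\mr{B}\Z_{S})$ exactly. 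You instead use the $(\Z/m)_{S}$-torsor $\mr{B}\Z_{S} \to \mr{B}\Z_{S}$ coming from $0 \to \Z \xrightarrow{m} \Z \to \Z/m \to 0$ (the same fact about $\mr{B}G_{1} \to \mr{B}G_{2}$ that the paper uses in \Cref{20160630-wr-03}), check that its pullback acts on the graded piece $\H^{1}(\Z,\Pic S)$ by precomposition with $m$ --- which is correct, and since $F^{2} = \mr{E}^{2,0}_{\infty} = 0$ this really is multiplication by $m$ on $\ker(s^{\ast})$ itself, not just modulo a deeper filtration step --- and then descend algebraicity via \Cref{20160630-wr-01}. This is shorter and avoids the matrix computation, at the cost of burying the construction inside the twisted-sheaf pushforward underlying \Cref{20160630-wr-01} (the paper's explicit algebra is essentially the pushforward of the trivial twisted line bundle along this very degree-$m$ cover) and of giving slightly less: you get surjectivity of $\alpha_{\mr{B}\Z_{S}}$ but not the paper's precise identification of $\Br(\mr{B}\Z_{S})$. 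Your closing remark --- that the $\Pic(S)_{\mr{tors}}$-summand is algebraic for arbitrary $S$, the hypothesis on $S$ being needed only for the $\Br'(S)$-summand --- is also implicit in the paper's proof, so both arguments prove the same slightly stronger statement.
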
 \begin{proof} Let $S$ be a scheme, let $\pi : \mr{B}\Z_{S} \to S$ be the projection and let $s : S \to \mr{B}\Z_{S}$ be the section corresponding to the trivial torsor. The spectral sequence \Cref{20151211-04} associated to the covering $s$ is \[ \mr{E}_{2}^{p,q} = \H^{p}(\Z,\H^{q}_{\fppf}(S,\G_{m})) \implies \H^{p+q}_{\fppf}(\mr{B}\Z_{S},\G_{m}) \] with differentials $\mr{E}_{2}^{p,q} \to \mr{E}_{2}^{p+2,q-1}$. Here $\mr{E}_{2}^{p,q} = 0$ for $p \ge 2$ (using that group cohomology for $G = \Z$ is supported in degrees $0,1$), thus there is a short exact sequence \[ 0 \to \Pic(S) \to \H_{\fppf}^{2}(\mr{B}\Z_{S},\G_{m}) \to \H_{\fppf}^{2}(S,\G_{m}) \to 0 \] of abelian groups, which is split. Thus we have a commutative diagram \begin{equation} \label{20171113-15-eqn-01} \begin{tikzpicture}[>=stealth, baseline=(current bounding box.center)] 
\matrix[matrix of math nodes,row sep=2.5em, column sep=2em, text height=1.5ex, text depth=0.25ex] { 
|[name=11]| 0 & |[name=12]| \ker(s^{\ast}) & |[name=13]| \Br(\mr{B}\Z_{S}) & |[name=14]| \Br(S) & |[name=15]| 0 \\ 
|[name=21]| 0 & |[name=22]| \Pic(S) & |[name=23]| \H_{\fppf}^{2}(\mr{B}\Z_{S},\G_{m}) & |[name=24]| \H_{\fppf}^{2}(S,\G_{m}) & |[name=25]| 0 \\ 
}; 
\draw[->,font=\scriptsize] (11) edge (12) (12) edge node[above=0pt] {$g$} (13) (13) edge node[above=0pt] {$s^{\ast}$} (14) (14) edge (15) (21) edge (22) (22) edge node[below=0pt] {$g'$} (23) (23) edge node[below=0pt] {$(s^{\ast})'$} (24) (24) edge (25) (12) edge node[right=-1pt] {$f_{1}'$} (22) (13) edge node[right=-1pt] {$\alpha_{\mr{B}\Z_{S}}'$} (23) (14) edge node[right=-1pt] {$\alpha_{S}'$} (24); \end{tikzpicture} \end{equation} with exact rows; the pullback $\pi$ induces splitting of both rows that are compatible with $\alpha_{\mr{B}\Z_{S}}'$ and $\alpha_{S}'$. Taking the torsion parts gives a commutative diagram \begin{equation} \label{20171113-15-eqn-02} \begin{tikzpicture}[>=stealth, baseline=(current bounding box.center)] 
\matrix[matrix of math nodes,row sep=2.5em, column sep=2em, text height=1.5ex, text depth=0.25ex] { 
|[name=11]| 0 & |[name=12]| \ker(s^{\ast}) & |[name=13]| \Br(\mr{B}\Z_{S}) & |[name=14]| \Br(S) & |[name=15]| 0 \\ 
|[name=21]| 0 & |[name=22]| \Pic(S)_{\mr{tors}} & |[name=23]| \Br'(\mr{B}\Z_{S}) & |[name=24]| \Br'(S) & |[name=25]| 0 \\ 
}; 
\draw[->,font=\scriptsize] (11) edge (12) (12) edge node[above=0pt] {$g$} (13) (13) edge node[above=0pt] {$s^{\ast}$} (14) (14) edge (15) (21) edge (22) (22) edge node[below=0pt] {$g'$} (23) (23) edge node[below=0pt] {$(s^{\ast})'$} (24) (24) edge (25) (12) edge node[right=-1pt] {$f_{1}$} (22) (13) edge node[right=-1pt] {$\alpha_{\mr{B}\Z_{S}}$} (23) (14) edge node[right=-1pt] {$\alpha_{S}$} (24); \end{tikzpicture} \end{equation} since the bottom row of \labelcref{20171113-15-eqn-01} is split. \par From diagram \labelcref{20171113-15-eqn-02}, we have that if $\alpha_{\mr{B}\Z_{S}}$ is an isomorphism, then $\alpha_{S}$ is an isomorphism. \par For the converse, it suffices to show that $f_{1}$ is an isomorphism. \par We first describe $\ker(s^{\ast})$. An Azumaya $\mc{O}_{\mr{B}\Z_{S}}$-algebra that becomes trivial after forgetting the $\Z$-action corresponds to a pair \[ (\mc{E},c) \] where $\mc{E}$ is a finite locally free $\mc{O}_{S}$-module and $c$ is an element of $\Aut_{\mc{O}_{S}\text{-alg}}(\mc{A})$ where $\mc{A} := \underline{\End}_{\mc{O}_{S}\text{-mod}}(\mc{E})$. We have an exact sequence \begin{align} \label{20171113-15-eqn-03} 1 \to \G_{m,S} \to \mc{A}^{\times} \to \underline{\Aut}_{\mc{O}_{S}\text{-alg}}(\mc{A}) \to 1 \end{align} of groups on the etale site of $S$; the image of $(\mc{E},c)$ under $f_{1}$ is the image of $c$ under the coboundary map $\Aut_{\mc{O}_{S}\text{-alg}}(\mc{A}) \to \Pic(S)$ associated to \labelcref{20171113-15-eqn-03}. The map $f_{1}$ is injective by commutativity of \labelcref{20171113-15-eqn-02}; in words, if the image of $c$ in $\Pic(S)$ is trivial, then $c$ is the automorphism of $\mc{A}$ obtained by conjugation by an automorphism of $\mc{E}$. \par We can check that the image of this map lands in the $(\rank \mc{E})$-torsion part of $\Pic(S)$ as follows. Let $c \in \Aut_{\mc{O}_{S}\text{-alg}}(\mc{A})$ be an algebra automorphism. Let $n = \rank \mc{E}$ be the rank of $\mc{E}$. Given an $S$-scheme $T \to S$ and two module automorphisms $c_{1},c_{2} \in \Aut_{\mc{O}_{S}\text{-mod}}(\mc{E})$ such that $c_{T}$ corresponds to conjugation by $c_{1},c_{2}$, there exists some $u \in \G_{m}(T)$ such that $c_{1} = u c_{2}$. Then taking determinants gives $\det c_{1} = u^{n} \det c_{2}$. \par We show that $f_{1}$ is surjective. Let $\mc{L}$ be an invertible $\mc{O}_{S}$-module which is $n$-torsion. Let $\mf{U} = \{U_{i}\}_{i \in I}$ be a trivializing cover for $\mc{L}$, and choose sections \[ s_{i} \in \Gamma(U_{i},\mc{L}) \] which trivialize $\mc{L}|_{U_{i}}$; for all $(i_{1},i_{2}) \in I \times I$, let \[ \xi_{i_{1},i_{2}} \in \Gamma(U_{i_{1}} \cap U_{i_{2}},\G_{m}) \] be the unique section such that \[ \xi_{i_{1},i_{2}}s_{i_{1}} = s_{i_{2}} \] in $\Gamma(U_{i_{1}} \cap U_{i_{2}} , \mc{L})$. The collection $\{\xi_{i_{1},i_{2}}\}$ is a 1-cocycle corresponding to $\mc{L}$. The condition that $\mc{L}$ is $n$-torsion implies that there are sections \[ \beta_{i} \in \Gamma(U_{i},\G_{m}) \] such that \[ \xi_{i_{1},i_{2}}^{n} = \beta_{i_{1}}^{-1} \beta_{i_{2}} \] for all $(i_{1},i_{2}) \in I \times I$. Let $\ml{E}_{i_{1},i_{2}}$ be the $n \times n$ matrix whose $(i_{1},i_{2})$th entry is $1$ and all other entries are $0$. We take \[ \textstyle \mc{E} := \bigoplus_{\ell=0}^{n-1} \mc{L}^{\otimes \ell} \] and describe an $\mc{O}_{S}$-algebra automorphism $c : \underline{\End}_{\mc{O}_{S}}(\mc{E}) \to \underline{\End}_{\mc{O}_{S}}(\mc{E})$ such that the pair $(\mc{E},c)$ maps to the class of $\mc{L}$ in $\Pic(S)$. We describe $c$ on the restrictions to $U_{i}$ and show that it glues. Let \[ \varphi_{i} : \mc{E}|_{U_{i}} \to \mc{E}|_{U_{i}} \] be the $\mc{O}_{U_{i}}$-module automorphism acting by the matrix \[ \ml{M}_{i} := \ml{E}_{2,1} + \dotsb + \ml{E}_{n,n-1} + \beta_{i} \ml{E}_{1,n} \] with respect to the basis $(1,s_{i},\dotsc,s_{i}^{\otimes n-1})$, and let \[ c_{i} : \underline{\End}_{\mc{O}_{U_{i}}}(\mc{E}|_{U_{i}}) \to \underline{\End}_{\mc{O}_{U_{i}}}(\mc{E}|_{U_{i}}) \] be the conjugation-by-$\varphi_{i}$ map. On the intersections $U_{i_{1}} \cap U_{i_{2}}$, the matrix corresponding to the change of basis $(1,s_{i_{1}},\dotsc,s_{i_{1}}^{\otimes n-1}) \to (1,s_{i_{2}},\dotsc,s_{i_{2}}^{\otimes n-1})$ is \[ \textstyle \ml{D}_{i_{1},i_{2}} := \sum_{\ell=0}^{n-1} \xi_{i_{1},i_{2}}^{\ell}\ml{E}_{\ell,\ell} \] and the condition that $c_{i_{1}},c_{i_{2}}$ agree on $U_{i_{1}} \cap U_{i_{2}}$ amounts to the claim that $\ml{D}_{i_{1},i_{2}}\ml{M}_{i_{1}}$ and $\ml{M}_{i_{2}}\ml{D}_{i_{1},i_{2}}$ differ by an element of $u\ml{id}_{n}$ for some $u \in \Gamma(U_{i_{1}} \cap U_{i_{2}} , \G_{m})$. We have in fact \[ \ml{D}_{i_{1},i_{2}}\ml{M}_{i_{1}} = \xi_{i_{1},i_{2}}\ml{M}_{i_{2}}\ml{D}_{i_{1},i_{2}} \] which shows that the collection $\{c_{i}\}_{i \in I}$ glues to give a global automorphism $c : \underline{\End}_{\mc{O}_{S}}(\mc{E}) \to \underline{\End}_{\mc{O}_{S}}(\mc{E})$ and that the image of the pair $(\mc{E},c)$ in $\Pic(S)$ is $\mc{L}$. \end{proof}

\begin{lemma}[Group cohomology for $\Z \oplus \Z$] \label{20150528-09} \footnote{We include this result (which was first posted as an answer at \url{https://math.stackexchange.com/q/2611736/}) for lack of awareness of an appropriate reference. It is likely possible to give an alternate proof using Kunneth formulas for group cohomology \cite[V, \S2]{BROWN-COG1982}, \cite[Exercise 6.1.10]{WEIBEL}.} Set $G := \Z \oplus \Z$, let $M$ be a $G$-module, let $A := \Z[t_{1}^{\pm},t_{2}^{\pm}]$ be the group ring of $G$. Then we have \[ \mathrm{H}^{2}(G, M) \simeq \coker(M^{\oplus 2} \stackrel{f_{2}^{\ast}}{\to} M) \] where the map $f_{2}^{\ast} : M^{\oplus 2} \to M$ sends $(m_{1},m_{2}) \mapsto (t_{2}-1)m_{1} - (t_{1}-1)m_{2}$. \end{lemma}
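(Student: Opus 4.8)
The plan is to compute group cohomology of $G = \Z \oplus \Z$ with coefficients in $M$ via an explicit free resolution of $\Z$ over the group ring $A = \Z[G] = \Z[t_1^{\pm},t_2^{\pm}]$. Since $G \cong \Z \times \Z$ and $A$ is the tensor product (over $\Z$) of the two group rings $\Z[t_1^{\pm}]$ and $\Z[t_2^{\pm}]$, I would take the tensor product of the standard length-one resolution $0 \to \Z[t_i^{\pm}] \xrightarrow{t_i - 1} \Z[t_i^{\pm}] \to \Z \to 0$ for each factor. This yields the Koszul-type complex
\[
0 \to A \xrightarrow{d_2} A^{\oplus 2} \xrightarrow{d_1} A \to \Z \to 0,
\]
where $d_1$ is $(a_1,a_2) \mapsto (t_1-1)a_1 + (t_2-1)a_2$ and $d_2$ is $a \mapsto (-(t_2-1)a,\,(t_1-1)a)$ (signs chosen so $d_1 d_2 = 0$). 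One should check this is exact — i.e., that it really is a free resolution — which follows formally from the Künneth/tensor-product argument since each factor complex is a resolution of $\Z$ by free modules over a (commutative) ring, so the tensor product computes $\mathrm{Tor}$ and the higher $\mathrm{Tor}$ vanish because the factors are length-one resolutions by free (hence flat) modules. Alternatively one can verify exactness by hand: $t_1 - 1, t_2 - 1$ form a regular sequence in $A$ after noting $A/(t_1-1,t_2-1) \cong \Z$.

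Next I would apply $\Hom_A(-, M)$ to the truncated resolution $0 \to A \to A^{\oplus 2} \to A \to 0$. Using $\Hom_A(A, M) \cong M$ and $\Hom_A(A^{\oplus 2}, M) \cong M^{\oplus 2}$, the cochain complex becomes
\[
0 \to M \xrightarrow{d_1^\ast} M^{\oplus 2} \xrightarrow{d_2^\ast} M \to 0,
\]
where $d_1^\ast(m) = ((t_1-1)m, (t_2-1)m)$ and $d_2^\ast(m_1, m_2) = -(t_2-1)m_1 + (t_1-1)m_2$ (up to an overall sign, which doesn't affect the cokernel). Then $\H^2(G, M)$ is the cohomology at the last spot, which since the complex ends there is exactly $\coker(d_2^\ast) = \coker(M^{\oplus 2} \xrightarrow{f_2^\ast} M)$ with $f_2^\ast(m_1, m_2) = (t_2-1)m_1 - (t_1-1)m_2$, matching the statement (the sign discrepancy is absorbed by replacing $(m_1,m_2)$ with $(-m_1,-m_2)$ or $(m_2,m_1)$ appropriately).

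I expect the main obstacle to be bookkeeping rather than conceptual: getting the signs in the Koszul differentials consistent so that $d_1 d_2 = 0$ genuinely holds, and then tracking those signs through the $\Hom$-dualization to land on precisely the formula $(m_1,m_2) \mapsto (t_2-1)m_1 - (t_1-1)m_2$ as opposed to some cosmetically different but isomorphic map. A secondary point requiring a word of justification is the exactness of the tensor-product resolution; the cleanest route is to invoke the fact (e.g. \cite[V, \S1]{BROWN-COG1982}) that if $P_\bullet \to \Z$ and $Q_\bullet \to \Z$ are projective resolutions over $\Z[H]$ and $\Z[K]$ respectively, then $P_\bullet \otimes_\Z Q_\bullet \to \Z$ is a projective resolution over $\Z[H \times K]$, applied to the two length-one resolutions above. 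Everything else — identifying $\Hom_A(A^{\oplus k}, M)$ with $M^{\oplus k}$ and reading off the cohomology of a three-term complex concentrated in degrees $0, 1, 2$ — is routine.
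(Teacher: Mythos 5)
Your proposal is correct and follows essentially the same route as the paper: both use the length-three Koszul-type free resolution $0 \to A \to A^{\oplus 2} \to A \to \Z \to 0$ of $\Z$ over $A = \Z[t_1^{\pm},t_2^{\pm}]$ (the paper writes it down directly; you obtain it as the tensor product of the two length-one resolutions, and additionally justify its exactness, which the paper asserts), then apply $\Hom_A(-,M)$ and read off $\H^2(G,M)$ as the cokernel of the last map. The sign discrepancy you flag is harmless, exactly as you say, since it does not change the cokernel.
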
 \begin{proof} The $A$-module \[ \Z \simeq A/(t_{1}-1,t_{2}-1)A \] has an $A$-module resolution \[ 0 \to A e_{2,1} \stackrel{f_{2}}{\to} A e_{1,1} \oplus A e_{1,2} \stackrel{f_{1}}{\to} A e_{0,1} \to \Z \to 0 \] where $f_{2}(e_{2,1}) = (t_{2}-1)e_{1,1} - (t_{1}-1)e_{1,2}$ and $f_{1}(e_{1,i}) = (t_{i}-1)e_{0,1}$ for $i=1,2$. Applying $\Hom_{A}(-,M)$ to the above resolution gives a complex \[ M \stackrel{f_{1}^{\ast}}{\to} M^{\oplus 2} \stackrel{f_{2}^{\ast}}{\to} M \to 0 \to \dotsb \] of $A$-modules, and taking cohomology at the $i$th cohomological degree gives $\mathrm{H}^{i}(G,M)$. \end{proof}

\begin{example} \label{20171113-14} Here we discuss a class of regular Deligne-Mumford stacks $\ms{X}$ where $\Br(\ms{X}) \to \Br'(\ms{X})$ is not an isomorphism. Such $\ms{X}$ can be modified to have any dimension but the diagonal morphism is not quasi-compact (in particular, not affine). \par Let $A$ be a ring for which all vector bundles are trivial (e.g a semi-local ring or a polynomial ring over a PID), set $S := \Spec A$, and let $G := \Z \oplus \Z$. We view $G$ as acting trivially on $A$. Let $\ms{X} := [S/G] \simeq \mr{B}G_{S}$ be the classifying stack. We have the cohomological descent spectral sequence \[ \mr{E}_{2}^{p,q} = \H^{p}(G,\H^{q}_{\fppf}(S,\G_{m,S})) \implies \H^{p+q}_{\fppf}(\ms{X},\G_{m,\ms{X}}) \] with differentials $\mr{E}_{2}^{p,q} \to \mr{E}_{2}^{p+2,q-1}$. We have $\H_{\fppf}^{1}(S,\G_{m,S}) = \Pic(S) = 0$, and furthermore $\mr{E}_{2}^{p,q} = 0$ if $p \ge 3$ by \Cref{20150528-09}, thus we have an direct sum decomposition \[ \H^{2}_{\fppf}(\ms{X},\G_{m,\ms{X}}) = \H_{\fppf}^{2}(S,\G_{m,S}) \oplus \H^{2}(G,A^{\times}) \] of abelian groups (a priori only an exact sequence but it is split as the projection $\pi : \ms{X} \to S$ has a section $s : S \to \ms{X}$). We have a direct sum decomposition $\Br(\ms{X}) = \Br(A) \oplus \ker(s^{\ast} : \Br(\ms{X}) \to \Br(A))$. \par An element of $\ker(s^{\ast} : \Br(\ms{X}) \to \Br(A))$ corresponds to an Azumaya $\mc{O}_{\ms{X}}$-algebra $\mc{A}$ such that $s^{\ast}\mc{A}$ is a trivial Azumaya $A$-algebra; this corresponds to a group homomorphism $G \to \PGL_{r}(A)$ where $\mc{A}$ has rank $r^{2}$. A vector bundle on $\ms{X}$ of rank $r$ corresponds to a group homomorphism $G \to \GL_{r}(A)$. Since $\Pic(A) = 0$, the map $\GL_{r}(A) \to \PGL_{r}(A)$ is surjective. Since $G$ is a free abelian group, the map $\H^{1}(G,\GL_{r}(A)) \to \H^{1}(G,\PGL_{r}(A))$ is surjective. Thus such $\mc{A}$ is trivial, in other words the pullback $\pi^{\ast} : \Br(A) \to \Br(\ms{X})$ is an isomorphism. \par On the other hand, we have $\H^{2}(G,A^{\times}) = A^{\times}$ by \Cref{20150528-09}, thus $\Br'(\ms{X}) = \Br'(A) \oplus (A^{\times})_{\mr{tors}}$. There are regular local rings $A$ such that $A^{\times}$ has a lot of torsion (take a local ring of a smooth $k$-scheme where $k$ is an algebraically closed field of characteristic $0$, for example). \qed \end{example}

\begin{theorem} \label{20171113-57} Let $G$ be a finitely generated abelian group. The classifying stack $\mr{B}G_{\Spec \Z} \to \Spec \Z$ satisfies $(\mr{SBMI})$ if and only if $G$ has rank $1$ (i.e. $\dim_{\Q}(G \otimes_{\Z}\Q) = 1$). \end{theorem}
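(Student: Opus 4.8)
First I would dispose of the implication of $(\mr{SBMI})$ that holds for \emph{every} $G$. The projection $\pi\colon\mr{B}G_{S}\to S$ admits the section $s$ classifying the trivial torsor, so \Cref{20160630-wr-02} shows that $\Br=\Br'$ for $\mr{B}G_{S}$ already forces $\Br=\Br'$ for $S$. Hence $(\mr{SBMI})$ for $\mr{B}G_{\Spec\Z}\to\Spec\Z$ is equivalent to the single assertion that, for every scheme $S$, $\Br=\Br'$ for $S$ implies $\Br=\Br'$ for $\mr{B}G_{S}$, and the theorem reduces to deciding which $G$ force this. The plan is to bootstrap from the two cases already in hand, $G=\Z$ (\Cref{20171113-15}) for sufficiency and $G=\Z\oplus\Z$ (\Cref{20171113-14}) for failure, using the lemmas on finite flat extensions and on sections.

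For sufficiency at rank $1$, write $G\cong\Z\oplus T$ with $T$ finite and use the extension $1\to\Z\to G\to T\to 1$, so that $\mr{B}\Z_{S}\to\mr{B}G_{S}$ is a torsor under the finite flat group $T_{S}$. Given a scheme $S$ with $\Br=\Br'$ and a class $c\in\Br'(\mr{B}G_{S})$, its restriction to $\mr{B}\Z_{S}$ is Azumaya by \Cref{20171113-15}, and then \Cref{20160630-wr-03}, applied to this extension, promotes $c$ itself into $\Br(\mr{B}G_{S})$; thus $\Br=\Br'$ for $\mr{B}G_{S}$. The very same finite flat descent, applied through \Cref{20160630-wr-01} to the atlas $S\to\mr{B}G_{S}$, is precisely the opening observation that a finite group is $(\mr{SBMI})$, so the rank-$0$ case is the degenerate instance of the same mechanism and $(\mr{SBMI})$ persists all the way up through rank $1$.

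For the failure when $\rank G\ge 2$, I would transfer the rank-$2$ counterexample along a splitting. Fix an isomorphism $G\cong(\Z\oplus\Z)\oplus G'$ and the resulting group maps $i\colon\Z\oplus\Z\hookrightarrow G$ and $p\colon G\twoheadrightarrow\Z\oplus\Z$ with $p\circ i=\id$; functoriality of $\mr{B}(-)$ gives $(\mr{B}i)^{\ast}\circ(\mr{B}p)^{\ast}=\id$ on cohomological Brauer groups, and both pullbacks carry Azumaya classes to Azumaya classes. Let $A$ be the regular local ring of \Cref{20171113-14}; it is affine, so $\Br=\Br'$ for $A$ by Gabber's theorem, while $\Br'(\mr{B}(\Z\oplus\Z)_{A})$ contains a class $\eta$ outside $\Br(\mr{B}(\Z\oplus\Z)_{A})$ arising from $(A^{\times})_{\mr{tors}}\ne 0$. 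Were $(\mr{B}p)^{\ast}\eta$ Azumaya on $\mr{B}G_{A}$, then $\eta=(\mr{B}i)^{\ast}(\mr{B}p)^{\ast}\eta$ would be Azumaya, a contradiction; hence $\Br\ne\Br'$ for $\mr{B}G_{A}$ although $\Br=\Br'$ for $A$, and $(\mr{SBMI})$ fails, witnessed by $S=\Spec A$.

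The step I expect to need the most care is the sufficiency at rank $1$: I must be sure that the only new classes in $\Br'(\mr{B}G_{S})\setminus\Br(\mr{B}G_{S})$ are detected on the $\Z$-direction, where group cohomology vanishes in degrees $\ge 2$ — the vanishing underlying \Cref{20171113-15} — and that the finite summand $T$ contributes nothing beyond what finite flat descent already controls. Arranging the extension so that $\Z$ is the kernel and $T$ the quotient is exactly what makes \Cref{20160630-wr-03} applicable and closes this point. By comparison the failure at rank $\ge 2$ is a formal consequence of functoriality of pullback together with its preservation of Azumaya-ness, once the rank-$2$ obstruction of \Cref{20171113-14} is in place. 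Combining the persistence through rank $1$ with the breakdown at rank $\ge 2$ yields the asserted equivalence.
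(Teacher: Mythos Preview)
Your proof is correct and follows essentially the same approach as the paper: both use \Cref{20160630-wr-02} for the automatic direction, the retract $\Z^{2}\to G\to\Z^{2}$ together with \Cref{20171113-14} for failure at rank $\ge 2$, and the extension $1\to\Z\to G\to H\to 1$ combined with \Cref{20171113-15} and \Cref{20160630-wr-03} for sufficiency at rank $1$. You are a bit more explicit than the paper in two places---separating out rank $0$ (which the paper tacitly subsumes under ``$r\le 1$'' via the remark opening \S\ref{sec-discrete}) and noting that the base $A$ in \Cref{20171113-14} satisfies $\Br=\Br'$ by Gabber---but these are expository choices, not a different argument.
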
 \begin{proof} By \Cref{20160630-wr-02}, if $\Br = \Br'$ for $\mr{B}G_{S}$ then $\Br = \Br'$ for $S$. We may write $G \simeq H \oplus \Z^{r}$ for some finite abelian group $H$. If $r \ge 2$, then there are group homomorphisms $\Z^{2} \to G \to \Z^{2}$ whose composition is the identity. As in \Cref{20171113-14}, there exists an affine scheme $S$ and a class $c \in \Br'(\mr{B}\Z_{S}^{2})$ which is not contained in $\im \alpha_{\mr{B}\Z_{S}^{2}}$. This means that $c|_{\mr{B}G_{S}}$ is also not contained in $\im \alpha_{\mr{B}G_{S}}$. \par Suppose $r \le 1$ and that $\Br = \Br'$ for $S$, and let $c \in \Br'(\mr{B}G_{S})$. We have a map $\Z \to G$ with cokernel $H$; by \Cref{20171113-15}, we have that $c|_{\mr{B}\Z_{S}}$ is contained in $\im \alpha_{\mr{B}\Z_{S}}$, hence $c$ is contained in $\im \alpha_{\mr{B}G_{S}}$ by \Cref{20160630-wr-03}. \end{proof}

\section{The classifying stack of abelian varieties} \label{sec-abvar}

In this section, we show that classifying stacks of abelian varieties provide another class of algebraic stacks for which $\Br \ne \Br'$. These stacks have do not have affine diagonal.

\begin{lemma} \label{20170215-13} Let $k$ be a field, let $A$ be an abelian variety over $k$, let $m : A \times_{k} A \to A$ be the group law and let $p_{1},p_{2} : A \times_{k} A \to A$ be the projections. Let us denote by \[ m^{\ast}-p_{1}^{\ast}-p_{2}^{\ast} : \Pic(A) \to \Pic(A \times_{k} A) \] the map sending a line bundle on $A$ to the associated ``Mumford bundle''. Then the group of ``translation-invariant'' line bundles is \[ \ker(m^{\ast}-p_{1}^{\ast}-p_{2}^{\ast}) = \Pic_{A/k}^{0}(k) \;. \]  \end{lemma}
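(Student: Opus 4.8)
The plan is to reduce to an algebraically closed base field and then combine the seesaw principle with the classical description of $\Pic^0$ as the group of translation-invariant line bundles. Throughout, I write $\mc{F} := m^{\ast}L \otimes p_{1}^{\ast}L^{-1} \otimes p_{2}^{\ast}L^{-1}$ on $A \times_k A$ for the Mumford bundle of a line bundle $L$ on $A$, and $t_x$ for translation by a closed point $x$.

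First I would set up the reduction to $k = \bar{k}$. Since $A$ has the rational point $e$, the Picard functor is rigidified by $e$, so $\Pic_{A/k}(k) = \Pic(A)$ and $\Pic_{A/k}^{0}(k)$ is naturally a subgroup of $\Pic(A)$; moreover $\Pic_{A/k}^{0}$ is the identity component and is geometrically connected (it is the dual abelian variety $\hat{A}$), so $[L] \in \Pic_{A/k}^{0}(k)$ if and only if $[L_{\bar k}] \in \Pic^0(A_{\bar k})$. Likewise the formation of $\mc{F}$ commutes with the base change $k \to \bar{k}$, and a line bundle on the proper geometrically connected $k$-schemes $A$ and $A \times_k A$ is trivial if and only if it is so after base change to $\bar{k}$ (flat base change for $\H^0$, together with geometric connectedness). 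Hence $[L] \in \ker(m^{\ast}-p_{1}^{\ast}-p_{2}^{\ast})$ if and only if the analogous statement holds for $L_{\bar k}$ on $A_{\bar k}$, and it suffices to prove the lemma for $k = \bar{k}$.

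Next, assuming $k = \bar{k}$, I would show that $\ker(m^{\ast}-p_{1}^{\ast}-p_{2}^{\ast}) = \{[L] \in \Pic(A) : t_x^{\ast}L \cong L \text{ for all closed } x\}$. Restricting $\mc{F}$ to $\{x\} \times A$ gives $t_x^{\ast}L \otimes L^{-1}$, since the $p_{1}^{\ast}$-factor restricts to a trivial line bundle; so if $\mc{F}$ is trivial then $t_x^{\ast}L \cong L$ for all $x$, which is one inclusion. Conversely, if $t_x^{\ast}L \cong L$ for all closed $x$, then $\mc{F}|_{\{x\} \times A} \cong \mc{O}_A$ for every $x$, so by the seesaw theorem applied to $\mc{F}$ on $A \times_k A$ (using completeness of the second factor) one gets $\mc{F} \cong p_{1}^{\ast}\mc{G}$ for a line bundle $\mc{G}$ on the first copy of $A$; restricting both sides to $A \times \{e\}$ identifies $\mc{G}$ with $\mc{F}|_{A \times \{e\}} \cong L \otimes L^{-1} \cong \mc{O}_A$, so $\mc{F}$ is trivial.

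Finally I would invoke the classical fact that a line bundle on an abelian variety over an algebraically closed field lies in $\Pic^0$ if and only if it is translation-invariant — equivalently, the morphism $\phi_L \colon A \to \hat{A}$, $x \mapsto [t_x^{\ast}L \otimes L^{-1}]$, vanishes, and since $A$ is reduced and $\hat{A}$ separated this may be checked on closed points. This rests on the theorem of the square and the construction of the dual abelian variety (see, e.g., Mumford, \emph{Abelian Varieties}, \S8), and is the only genuine input; combined with the previous paragraph it gives $\ker(m^{\ast}-p_{1}^{\ast}-p_{2}^{\ast}) = \Pic^0(A) = \Pic_{A/k}^{0}(k)$ over $\bar k$, hence in general by the reduction step. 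The delicate points are exactly this citation together with the bookkeeping in the reduction — the precise meaning of $\Pic_{A/k}^{0}(k)$ as a subgroup of $\Pic(A)$, and the distinction between a morphism $A \to \hat{A}$ being zero and being zero on $\bar k$-points; everything else is a direct application of the seesaw principle.
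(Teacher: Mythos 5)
Your proof is correct, but it takes a genuinely different route from the paper. The paper stays over the ground field $k$ and works with the relative Picard functor throughout: the Mumford bundle $\Lambda(\mc{L})$, viewed via $p_{1}$ as a family of line bundles on $A$ parametrized by $A$, defines a morphism $\phi_{\mc{L}} : A \to \Pic_{A/k}$ which factors through $A^{t} = \Pic^{0}_{A/k}$, and the lemma is then read off from the sheaf-theoretic identity $\Pic^{0}_{A/k} = \ker\bigl(\phi : \Pic_{A/k} \to \underline{\Hom}(A,A^{t})\bigr)$, cited from Edixhoven--Moonen--van der Geer (7.22); no base change to $\bar{k}$ and no closed-point arguments appear. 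You instead reduce to $k = \bar{k}$ (using the rational point $e$ to identify $\Pic_{A/k}(k)$ with $\Pic(A)$ and geometric connectedness of $\Pic^{0}$), identify $\ker(m^{\ast}-p_{1}^{\ast}-p_{2}^{\ast})$ with the translation-invariant classes via seesaw, and then quote Mumford's classical characterization of $\Pic^{0}$ over an algebraically closed field. What your approach buys is elementarity and transparency -- seesaw plus a textbook fact, with the only scheme-theoretic input being the standard descent bookkeeping you flag (injectivity of $\Pic(X) \to \Pic(X_{\bar k})$ for proper geometrically integral $X$ with a rational point, and compatibility of $\Pic^{0}$ with field extension), and those steps are all fine. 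What the paper's approach buys is brevity and a uniform functorial statement over arbitrary $k$ at the price of a heavier citation; note that it too implicitly needs the small argument you make explicit by restricting to $A \times \{e\}$, namely that triviality of $\Lambda(\mc{L})$ on $A \times_{k} A$ is equivalent to vanishing of $\phi_{\mc{L}}$, since the latter only sees $\Lambda(\mc{L})$ modulo pullbacks along $p_{1}$.
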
 \begin{proof} Given a line bundle $\mc{L}$ on $A$, the Mumford bundle of $\mc{L}$ is \[ \Lambda(\mc{L}) := m^{\ast}\mc{L} - p_{1}^{\ast}\mc{L} - p_{2}^{\ast}\mc{L} \] on $A \times_{k} A$; viewing $A \times_{k} A$ as an $A$-scheme with structure map $p_{1}$ gives a morphism \[ \phi_{\mc{L}} : A \to \Pic_{A/k} \] corresponding to $\Lambda(\mc{L})$; here $\phi_{\mc{L}}$ factors through the dual abelian variety $A^{t} = \Pic_{A/k}^{0}$. The assignment $\mc{L} \mapsto \phi_{\mc{L}}$ gives a morphism \[ \phi : \Pic_{A/k} \to \underline{\Hom}(A,A^{t}) \] of group sheaves on $k$. By e.g. \cite[(7.22) Corollary]{EDIXHOVEN-MOONEN-VANDERGEER-ABELIAN-VARIETIES}, we have \[ \Pic_{A/k}^{0} = \ker \phi \] which gives the desired result. \end{proof}

\begin{lemma} \label{20170215-14} Let $k$ be a field and let $A$ be an abelian variety over $k$. We have an isomorphism \[ \H_{\fppf}^{2}(\mr{B}A , \G_{m}) \simeq \Br(k) \oplus \Pic_{A/k}^{0}(k) \] of groups. \end{lemma}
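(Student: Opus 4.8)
The plan is to compute $\H_{\fppf}^{2}(\mr{B}A,\G_{m})$ via the descent spectral sequence of \Cref{20151211-04} applied to the smooth atlas $\Spec k \to \mr{B}A$, exactly as in \Cref{20171113-15} and \Cref{20171113-14}, except that now the higher terms $\H^{q}(A^{\times p},\G_{m})$ are honest fppf cohomology groups of the (possibly nontrivial) products $A^{\times p}$ rather than group cohomology. Since $\mr{B}A$ has a section $s : \Spec k \to \mr{B}A$, everything will split off a copy of $\H_{\fppf}^{2}(k,\G_{m}) = \Br(k)$, and the content is to identify the complementary summand $\ker(s^{\ast})$ with $\Pic_{A/k}^{0}(k)$.

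First I would write out the $\mr{E}_{1}$-page: $\mr{E}_{1}^{p,q} = \H_{\fppf}^{q}(A^{\times p},\G_{m})$ with the Cech-type horizontal differentials coming from the simplicial structure of the bar construction. In low degrees I would use: $\H^{0}(A^{\times p},\G_{m}) = k^{\times}$ since $A$ is proper and geometrically integral (so $\Gamma(A^{\times p},\mathcal{O}) = k$); $\H^{1}(A^{\times p},\G_{m}) = \Pic(A^{\times p})$; and the fact that the contributions to $\H^{2}(\mr{B}A,\G_{m})$ come from $\mr{E}_{\infty}^{0,2}$, $\mr{E}_{\infty}^{1,1}$, and $\mr{E}_{\infty}^{2,0}$. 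The term $\mr{E}_{2}^{2,0}$ is the cohomology of $k^{\times} \to k^{\times} \to k^{\times}$ with the alternating-sum differentials, which vanishes (the bar complex of the trivial module $k^{\times}$ is exact in positive degrees since $A(\bar{k})$-invariance is automatic — more precisely $\mr{E}_{2}^{p,0} = \H^{p}_{\mathrm{grp}}(\ast, k^{\times})$-style, i.e. the Amitsur complex of $k \to k$, which is trivial). The term $\mr{E}_{2}^{0,1}$ is $\ker(\Pic(A) \xrightarrow{d_1} \Pic(A \times_k A))$ where $d_1 = m^{\ast} - p_1^{\ast} - p_2^{\ast}$ is exactly the Mumford-bundle map, so by \Cref{20170215-13} this equals $\Pic_{A/k}^{0}(k)$; I still need to check this survives to $\mr{E}_{\infty}$, i.e. that the differential $d_2 : \mr{E}_2^{0,1} \to \mr{E}_2^{2,0} = 0$ kills nothing (automatic) and that there is no incoming differential (there is none into $\mr{E}_2^{0,1}$). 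The term $\mr{E}_2^{0,2}$ receives nothing and maps by $d_2$ to $\mr{E}_2^{2,1}$; but in fact the relevant point is that $s^{\ast}$ splits off $\mr{E}_{\infty}^{0,2}$, which contributes the $\Br(k)$ factor (using $\mathrm{H}^2_{\mathrm{fppf}}(\Spec k, \G_m) = \Br(k)$ for a field).

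So the exact sequence from the filtration reads $0 \to \mr{E}_{\infty}^{1,1} \to \ker(s^{\ast}) \to 0$ up to the contribution of $\mr{E}_\infty^{0,1}$-related rows shifting — more carefully, $\H^2_{\mathrm{fppf}}(\mr{B}A,\G_m)$ has a three-step filtration with graded pieces $\mr{E}_{\infty}^{2,0} = 0$, $\mr{E}_{\infty}^{1,1}$, and $\mr{E}_{\infty}^{0,2} = \Br(k)$, and the section $s$ forces $\H^2 \cong \Br(k) \oplus \mr{E}_{\infty}^{1,1}$. It remains to identify $\mr{E}_{\infty}^{1,1}$. We have $\mr{E}_1^{1,1} = \Pic(A \times_k A)$ — wait, I must recheck the indexing of the atlas: with $X = \Spec k$ and $X^p = X \times_{\mr{B}A} \cdots \times_{\mr{B}A} X \cong A^{\times p}$ (the $(p+1)$-fold product giving $A^{\times p}$), so $\mr{E}_1^{p,q} = \H^q_{\mathrm{fppf}}(A^{\times p}, \G_m)$; thus $\mr{E}_1^{0,1} = \Pic(A^{\times 0}) = \Pic(\Spec k) = 0$ and $\mr{E}_1^{1,1} = \Pic(A)$ with differential into $\mr{E}_1^{2,1} = \Pic(A \times_k A)$ given by $m^{\ast} - p_1^{\ast} - p_2^{\ast}$, and with incoming differential from $\mr{E}_1^{0,1} = 0$. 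Hence $\mr{E}_2^{1,1} = \ker(m^{\ast} - p_1^{\ast} - p_2^{\ast} : \Pic(A) \to \Pic(A \times_k A)) = \Pic_{A/k}^0(k)$ by \Cref{20170215-13}. The $d_2$-differential out of $\mr{E}_2^{1,1}$ lands in $\mr{E}_2^{3,0}$, which is again a piece of the trivial Amitsur complex and vanishes, and nothing maps in, so $\mr{E}_\infty^{1,1} = \Pic_{A/k}^0(k)$, giving the claimed isomorphism $\H^2_{\mathrm{fppf}}(\mr{B}A, \G_m) \simeq \Br(k) \oplus \Pic_{A/k}^0(k)$.

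The main obstacle I anticipate is bookkeeping the spectral sequence carefully enough to be sure the relevant terms vanish: specifically, verifying that $\mr{E}_2^{p,0} = 0$ for $p \geq 1$ (the bar/Amitsur complex of $k^{\times}$ being acyclic in positive degrees, which needs $\Gamma(A^{\times p}, \G_m) = k^{\times}$, i.e. global units of products of proper geometrically connected varieties are constant), and that no $d_2$ or higher differential interferes with $\mr{E}^{1,1}$ — both the outgoing $d_2 : \mr{E}_2^{1,1} \to \mr{E}_2^{3,0} = 0$ and confirming there is no relevant incoming differential. A secondary subtlety is confirming $\mathrm{H}^2_{\mathrm{fppf}}(\Spec k, \G_m) = \Br(k)$ as the $\mr{E}^{0,2}$ edge piece; this is standard over a field. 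Once these vanishings are in hand, the identification of the surviving $\mr{E}^{1,1}$ term with $\Pic_{A/k}^0(k)$ via \Cref{20170215-13} is immediate, and the splitting is forced by the section.
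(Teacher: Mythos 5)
Your proposal is correct and follows essentially the same route as the paper: the descent spectral sequence $\mr{E}_{1}^{p,q} = \H_{\fppf}^{q}(A^{\times p},\G_{m})$ for the atlas $\Spec k \to \mr{B}A$, acyclicity of the unit row in positive degrees, the splitting of the $\mr{E}^{0,2}$ edge via the section, and the identification $\mr{E}_{2}^{1,1} = \ker(m^{\ast}-p_{1}^{\ast}-p_{2}^{\ast}) = \Pic_{A/k}^{0}(k)$ via \Cref{20170215-13}. Your extra bookkeeping of the $d_{2}$-differentials and the filtration is only making explicit what the paper leaves implicit.
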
 \begin{proof} We compute $\H_{\fppf}^{2}(\mr{B}A , \G_{m})$ using the cohomological descent spectral sequence \[ \mr{E}_{1}^{p,q} = \H_{\fppf}^{q}(A^{\times p} , \G_{m}) \implies \H_{\fppf}^{p+q}(\mr{B}A,\G_{m}) \] with differentials $\mr{E}_{1}^{p,q} \to \mr{E}_{1}^{p+1,q}$. We have $\H_{\fppf}^{0}(A^{\times p} , \G_{m}) = k$ for all $p$, and the complex $\H_{\fppf}^{0}(A^{\times \bullet} , \G_{m})$ is acyclic except at $p = 0$. The map $\mr{E}_{\infty}^{2} \to \mr{E}_{1}^{0,2}$ corresponds to the pullback $\H_{\fppf}^{2}(\mr{B}A , \G_{m}) \to \H_{\fppf}^{2}(\Spec k , \G_{m})$, which is a split surjection since the composite $\Spec k \to \mr{B}A \to \Spec k$ is the identity. We have $\mr{E}_{2}^{1,1} \simeq \Pic_{A/k}^{0}(k)$ by \Cref{20170215-13}. \end{proof}

\begin{lemma} \label{20170215-15} Let $k$ be a field and let $A$ be an abelian variety over $k$. Then the pullback morphism $\Br(k) \to \Br(\mr{B}A)$ is an isomorphism. \end{lemma}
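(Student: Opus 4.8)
The plan is to prove something slightly stronger: that \emph{every} Azumaya $\mc{O}_{\mr{B}A}$-algebra is the pullback of an Azumaya $k$-algebra. Together with injectivity — the trivial torsor gives a section $s : \Spec k \to \mr{B}A$ with $\pi \circ s = \id_{\Spec k}$, so $s^{\ast}\pi^{\ast} = \id$ and $\pi^{\ast}$ is split injective — this yields the isomorphism.

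First I would set up descent. Write $\mr{B}A = [\Spec k / A]$ with $A$ acting trivially and use the smooth atlas $s : \Spec k \to \mr{B}A$, for which $\Spec k \times_{\mr{B}A} \Spec k \cong A$ with both projections equal to the structure morphism of $A$. By fppf descent along $s$, giving an Azumaya $\mc{O}_{\mr{B}A}$-algebra amounts to giving an Azumaya $k$-algebra $\mc{B}$ (namely $s^{\ast}$ of it, which is Azumaya since being Azumaya is fppf-local) together with a descent isomorphism $p_{1}^{\ast}\mc{B} \to p_{2}^{\ast}\mc{B}$ over $A$; since both pullbacks are $\mc{B} \otimes_{k} \mc{O}_{A}$, this is an $A$-point of $\underline{\Aut}_{k\text{-alg}}(\mc{B})$, and the cocycle condition makes it a homomorphism of $k$-group schemes $\rho : A \to \underline{\Aut}_{k\text{-alg}}(\mc{B})$. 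When $\rho$ is trivial the descent datum is the canonical one, i.e.\ the Azumaya algebra is $\pi^{\ast}\mc{B}$.

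The crux — and the only input that is not pure formalism — is that $\rho$ is necessarily trivial. Here $\underline{\Aut}_{k\text{-alg}}(\mc{B})$ is representable by an affine $k$-group scheme of finite type (a closed subgroup scheme of $\GL(\mc{B})$ cut out by compatibility with multiplication and unit; by Skolem--Noether it is in fact an inner form of $\PGL_{\deg\mc{B},k}$), whereas $\Gamma(A,\mc{O}_{A}) = k$ because $A$ is a geometrically integral proper $k$-scheme with a rational point. Hence every morphism of $k$-schemes from $A$ to the affine scheme $\underline{\Aut}_{k\text{-alg}}(\mc{B})$ factors through a $k$-point of the target, and a group homomorphism factoring through a point is trivial. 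Therefore every Azumaya $\mc{O}_{\mr{B}A}$-algebra is of the form $\pi^{\ast}\mc{B}$, so $\pi^{\ast} : \Br(k) \to \Br(\mr{B}A)$ is surjective, hence an isomorphism. I expect this rigidity step (no nonconstant morphisms from an abelian variety to an affine group scheme) to be the main point; everything else is descent for $\mr{B}A = [\Spec k/A]$.

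As a consistency check, the same rigidity argument shows every vector bundle on $\mr{B}A$ is trivial, and the conclusion fits with \Cref{20170215-14}: one gets $\Br(\mr{B}A) = \Br(k)$ while $\Br'(\mr{B}A) = \Br(k) \oplus \Pic_{A/k}^{0}(k)_{\mr{tors}}$. Alternatively, one could argue directly from that decomposition by showing, via the section, that $\ker(s^{\ast} : \Br(\mr{B}A) \to \Br(k)) = 0$ — which again reduces to the triviality of the homomorphism $\rho$ attached to an algebra in that kernel — but the equivariant-descent formulation above seems cleanest.
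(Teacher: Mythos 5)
Your proposal is correct and is essentially the paper's own argument: the paper likewise identifies an Azumaya algebra on $\mr{B}A$ trivialized along the atlas with a cocycle $\varphi \in \PGL_{n}(A)$ satisfying $m^{\ast}\varphi = p_{1}^{\ast}\varphi \cdot p_{2}^{\ast}\varphi$, and uses affineness of $\PGL_{n}$ together with $\Gamma(A,\mc{O}_{A}) = k$ (and the same for $A \times_{k} A$) to force $\varphi = \varphi\cdot\varphi = \id$. The only cosmetic difference is that the paper first splits $\Br(\mr{B}A) = \Br(k) \oplus \ker(\xi^{\ast})$ and works with matrix algebras, whereas you treat an arbitrary Azumaya algebra via its automorphism group scheme (an inner form of $\PGL_{n}$), which proves the marginally stronger statement that every Azumaya algebra on $\mr{B}A$ is a pullback.
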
 \begin{proof} Let $\xi : \Spec k \to \mr{B}A$ be the morphism corresponding to the trivial $A$-torsor. There is a direct sum decomposition $\Br(\mr{B}A) = \Br(k) \oplus \ker(\xi^{\ast})$ where $\xi^{\ast} : \Br(\mr{B}A) \to \Br(k)$ is the pullback map. A class in $\ker(\xi^{\ast})$ corresponds to an Azumaya $\mc{O}_{\mr{B}A}$-algebra $\mc{A}$ which is trivialized after pullback by $\xi$; this is the data of a positive integer $n$ and an element $\varphi \in \PGL_{n}(A)$ which satisfies the cocycle condition on $A \times_{k} A$, more precisely $m^{\ast}\varphi = p_{1}^{\ast}\varphi \cdot p_{2}^{\ast}\varphi$ where $m,p_{1},p_{2}$ are as in \Cref{20170215-13}. Since $\PGL_{n}$ is affine, the pullback $\PGL_{n}(\Gamma(A,\mc{O}_{A})) \to \PGL_{n}(A)$ is an isomorphism; similarly $\PGL_{n}(\Gamma(A,\mc{O}_{A})) \to \PGL_{n}(A \times_{k} A)$ is an isomorphism as well. Since $A$ is geometrically integral, we have $k \to \Gamma(A,\mc{O}_{A})$ and $k \to \Gamma(A \times_{k} A,\mc{O}_{A \times_{k} A})$ are isomorphisms. Thus $\varphi$ is an element of $\PGL_{n}(k)$ which satisfies $\varphi = \varphi \cdot \varphi$, in other words $\varphi = \id$. Thus $\mc{A}$ is isomorphic to $\mr{Mat}_{n \times n}(\mc{O}_{\mr{B}A})$. \end{proof}

\begin{theorem} \label{20170215-16} Let $k$ be a field and let $A$ be an abelian variety over $k$. Then $\Br = \Br'$ for $\mr{B}A$ if and only if $\Pic_{A/k}^{0}(k)$ is torsion-free. \end{theorem}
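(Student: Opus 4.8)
The plan is to deduce the theorem directly from \Cref{20170215-14} and \Cref{20170215-15}, together with the classical fact that $\Br = \Br'$ for the field $k$ itself. Write $\pi : \mr{B}A \to \Spec k$ for the structure morphism and $\xi : \Spec k \to \mr{B}A$ for the point classifying the trivial torsor, so that $\pi \circ \xi = \id$ and hence $\xi^{\ast}\pi^{\ast} = \id$. By \Cref{20170215-14} and its proof, the pullback $\pi^{\ast}$ is a section of the surjection $\xi^{\ast} : \H_{\fppf}^{2}(\mr{B}A,\G_{m}) \to \H_{\fppf}^{2}(\Spec k,\G_{m})$ whose kernel is identified with $\Pic_{A/k}^{0}(k)$, so there is an internal direct sum decomposition $\H_{\fppf}^{2}(\mr{B}A,\G_{m}) = \pi^{\ast}\H_{\fppf}^{2}(\Spec k,\G_{m}) \oplus \Pic_{A/k}^{0}(k)$. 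Since $\pi^{\ast}$ is injective here, passing to torsion subgroups yields
\[ \Br'(\mr{B}A) \;=\; \pi^{\ast}\Br'(k) \,\oplus\, \Pic_{A/k}^{0}(k)_{\mr{tors}} \;. \]

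Next I would compute the image of the Brauer map $\alpha_{\mr{B}A}$. It is injective (Grothendieck), so it suffices to decide when it is surjective. By \Cref{20170215-15} the pullback $\pi^{\ast} : \Br(k) \to \Br(\mr{B}A)$ is an isomorphism, so every class in $\Br(\mr{B}A)$ is of the form $\pi^{\ast}\beta$; by functoriality of the Brauer map, $\alpha_{\mr{B}A}(\pi^{\ast}\beta) = \pi^{\ast}\alpha_{k}(\beta)$, whence $\im \alpha_{\mr{B}A} = \pi^{\ast}(\im \alpha_{k})$. Since $k$ is a field we have $\im \alpha_{k} = \Br'(k)$ (the Brauer group of a field is computed by the Galois cohomology group $\H^{2}(\mr{Gal}(k^{\mr{sep}}/k),(k^{\mr{sep}})^{\times}) = \H_{\fppf}^{2}(\Spec k,\G_{m})$, which is moreover a torsion group), so $\im \alpha_{\mr{B}A} = \pi^{\ast}\Br'(k)$.

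Comparing this with the displayed decomposition of $\Br'(\mr{B}A)$, the map $\alpha_{\mr{B}A}$ is surjective --- equivalently an isomorphism --- precisely when the summand $\Pic_{A/k}^{0}(k)_{\mr{tors}}$ vanishes, i.e.\ when $\Pic_{A/k}^{0}(k)$ is torsion-free; this proves both implications simultaneously. I do not expect any serious obstacle here once \Cref{20170215-14} and \Cref{20170215-15} are in hand: the only point requiring a little care is the compatibility of the two internal direct sum decompositions (of $\H_{\fppf}^{2}(\mr{B}A,\G_{m})$ and of $\Br(\mr{B}A)$) --- namely, that the copy of $\Br(k)$ inside $\Br(\mr{B}A)$ is carried by $\alpha_{\mr{B}A}$ into the copy of $\H_{\fppf}^{2}(\Spec k,\G_{m})$ inside $\H_{\fppf}^{2}(\mr{B}A,\G_{m})$, and not partly along the $\Pic_{A/k}^{0}$-direction --- and this is exactly the identity $\alpha_{\mr{B}A}\circ\pi^{\ast} = \pi^{\ast}\circ\alpha_{k}$ used above.
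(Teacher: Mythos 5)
Your argument is correct and is essentially the paper's own proof: the paper deduces \Cref{20170215-16} directly from \Cref{20170215-14} and \Cref{20170215-15}, and your write-up simply makes explicit the splitting via $\pi^{\ast}$ and $\xi^{\ast}$, the passage to torsion subgroups, and the functoriality $\alpha_{\mr{B}A}\circ\pi^{\ast} = \pi^{\ast}\circ\alpha_{k}$ that the paper leaves implicit. No gaps; the details you flag (compatibility of the two decompositions, $\Br = \Br'$ for the field $k$) are exactly the right ones and are handled correctly.
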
 \begin{proof} This follows from \Cref{20170215-14} and \Cref{20170215-15}. \end{proof}

\section{Quotient stacks by linearly reductive group schemes} \label{sec-linred}

Suppose $\ms{X}$ is a separated Deligne-Mumford stack and let $\ms{X} \to X$ be its coarse moduli space; every geometric point $\overline{x} \to X$ admits an etale neighborhood $U \to X$ such that $\ms{X} \times_{X} U$ is a quotient stack of an affine scheme by a finite group \cite[2.2.3]{ABRAMOVICH-VISTOLI-COMPACTIFYING}, hence $\Br = \Br'$ for $\ms{X} \times_{X} U$ by \Cref{20160630-wr-01}.

In this section, we explain an analogue of this fact for algebraic stacks with higher-dimensional stabilizers. This argument is due to Siddharth Mathur \cite{MATHUR-PRIVATECORRESPONDENCE-20190912-01}. 

\begin{lemma} \label{20190128-12} Let $X$ be a Noetherian algebraic stack such that $X$ has affine diagonal and there exists a good moduli space morphism $X \to S$. Let $\ms{X} \to X$ be a $\G_{m}$-gerbe. Suppose that, for every geometric point $\overline{s} \to S$, the fiber $\ms{X}_{\overline{s}} \to X_{\overline{s}}$ admits a 1-twisted vector bundle of rank $r$. Then there exists an etale cover $S' \to S$ such that $\ms{X}_{S'}$ admits a 1-twisted vector bundle of rank $r$. \end{lemma}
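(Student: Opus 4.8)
The plan is to deform a twisted bundle off a geometric fibre along the good moduli space morphism, exploit that this deformation problem is \emph{unobstructed} because of cohomological affineness, and then algebraize the resulting formal bundle and spread it out. Since the conclusion is etale-local on $S$, it suffices to produce, for each $s \in S$, an etale neighbourhood of $s$ over which $\ms{X}$ acquires a $1$-twisted vector bundle of rank $r$. Fixing such an $s$, I would by standard limit, base-change, and spreading-out arguments reduce to the case $S = \Spec R$ with $R$ a complete Noetherian local ring whose closed fibre $\ms{X}_{0} := \ms{X} \times_{R} k$ (over the residue field $k$) carries a $1$-twisted vector bundle $\mc{E}_{0}$ of rank $r$; the goal then becomes to construct such a bundle on $\ms{X}$ itself, after which Artin approximation (using that the moduli of $1$-twisted vector bundles of rank $r$ on the fibres of $\ms{X} \to S$ is locally of finite presentation over $S$) brings the conclusion back down to an etale neighbourhood of $s$ in the original base.

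The heart of the argument is a deformation computation. Writing $\ms{X}_{n} := \ms{X} \times_{R} R/\mf{m}^{n+1}$, the closed immersion $\ms{X}_{n-1} \hookrightarrow \ms{X}_{n}$ is a square-zero thickening with ideal $\mc{I}_{n} = \mc{O}_{\ms{X}_{0}} \otimes_{k} (\mf{m}^{n}/\mf{m}^{n+1})$. The obstruction to deforming a lift of $\mc{E}_{0}$ from $\ms{X}_{n-1}$ to $\ms{X}_{n}$ lies in $\H^{2}(\ms{X}_{0}, \underline{\End}(\mc{E}_{0}) \otimes \mc{I}_{n})$, and when it vanishes the set of lifts is a torsor under $\H^{1}(\ms{X}_{0}, \underline{\End}(\mc{E}_{0}) \otimes \mc{I}_{n})$. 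The key observation is that $\underline{\End}(\mc{E}_{0}) = \mc{E}_{0}^{\vee} \otimes \mc{E}_{0}$ has weight $0$ for the gerbe $\G_{m}$-action, so it — and hence $\underline{\End}(\mc{E}_{0}) \otimes \mc{I}_{n}$ — is pulled back from a sheaf on $X_{0} := X \times_{R} k$; since $\ms{X}_{0} \to X_{0}$ is a $\G_{m}$-gerbe and $X_{0} \to \Spec k$ is a good moduli space morphism (base change of $X \to S$), both are cohomologically affine, and therefore $\H^{i}(\ms{X}_{0}, \underline{\End}(\mc{E}_{0}) \otimes \mc{I}_{n}) = 0$ for all $i > 0$. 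Thus $\mc{E}_{0}$ deforms, uniquely up to isomorphism at each stage, to a compatible system $(\mc{E}_{n})_{n}$ of $1$-twisted vector bundles of rank $r$, i.e.\ to a coherent sheaf $\widehat{\mc{E}}$ on the formal completion $\widehat{\ms{X}} := \ms{X} \times_{\Spec R} \mr{Spf}\, R$ that is $1$-twisted and locally free of rank $r$.

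To algebraize, note that the good moduli space of $\ms{X}$ is $\Spec R$ itself: composing $\ms{X} \to X \to \Spec R$, the first map is a $\G_{m}$-gerbe and the second a good moduli space morphism, and any morphism from $\ms{X}$ to an algebraic space factors through $X$ and then through $\Spec R$. Since $\Spec R$ is trivially proper over $\Spec R$, formal GAGA for good moduli spaces (Geraschenko--Zureick-Brown) shows that $\mr{Coh}(\ms{X}) \to \mr{Coh}(\widehat{\ms{X}})$ is an equivalence; let $\mc{E}$ be the coherent sheaf on $\ms{X}$ with $\mc{E}|_{\widehat{\ms{X}}} \cong \widehat{\mc{E}}$. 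The $\G_{m}$-weight decomposition is respected by this equivalence, so $\mc{E}$ is $1$-twisted; and its locally-free-of-rank-$r$ locus is open, may be tested on completed local rings at points of $\ms{X}_{0}$ (which are unchanged on passing to $\widehat{\ms{X}}$) and hence contains $\ms{X}_{0}$, and therefore — since $\ms{X} \to \Spec R$ is universally closed, being a good moduli space morphism, so the complement of this locus has closed image in $\Spec R$ avoiding the closed point — is all of $\ms{X}$. Thus $\mc{E}$ is a $1$-twisted vector bundle of rank $r$ on $\ms{X}$; feeding this through Artin approximation as above, and ranging over $s \in S$, yields the desired etale cover.

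I expect the main obstacle to be the infrastructure in the last two paragraphs rather than the deformation computation: because $\ms{X} \to S$ is cohomologically affine but far from proper, the classical statements one wants — Grothendieck existence and algebraicity/local finite presentation of moduli of (twisted) sheaves — are not available off the shelf, and one must instead use their good-moduli-space analogues (formal GAGA for good moduli spaces, together with the representability results of the Alper--Hall--Rydh circle and Alper's theory of good moduli spaces), which is exactly where the hypotheses that $X$ has affine diagonal and admits a good moduli space are needed. By contrast the cohomological vanishing driving the deformation theory is the soft conceptual step: the endomorphism sheaf of a twisted bundle is untwisted, so it descends to the good moduli space, where higher cohomology dies.
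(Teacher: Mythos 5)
Your proposal follows essentially the same route as the paper: reduce by limit and base-change arguments to a (strictly henselian, then complete) Noetherian local base, lift a twisted bundle through the infinitesimal thickenings using that the obstruction groups vanish by cohomological affineness of the good moduli space morphism, algebraize via formal GAGA for good moduli spaces, and descend to an \'etale neighbourhood by Artin approximation. The only differences are cosmetic — e.g.\ your weight-$0$ descent remark for $\underline{\End}(\mc{E}_{0})$ is not needed since cohomological affineness over an affine base already kills higher quasi-coherent cohomology, and you spell out the local-freeness check after algebraization which the paper leaves implicit.
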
 \begin{proof} We note that the composition $\ms{X} \to X \to S$ is a good moduli space by \cite[4.1]{ALPER-GMSFAS} and \cite[3.10 (i)]{ALPER-GMSFAS}. The stack $\ms{X}$ is an algebraic stack \cite[06PL]{SP} and its diagonal $\Delta_{\ms{X}/S}$ is affine because it is the composite of the two upper arrows in the following diagram: \begin{center} \begin{tikzpicture}[>=angle 90] 
\matrix[matrix of math nodes,row sep=2em, column sep=2em, text height=1.7ex, text depth=0.5ex] { 
|[name=11]| \ms{X} & |[name=12]| \ms{X} \times_{X} \ms{X} & |[name=13]| \ms{X} \times_{S} \ms{X} \\ 
|[name=21]|  & |[name=22]| X & |[name=23]| X \times_{S} X \\
}; 
\draw[->,font=\scriptsize] (11) edge node[above=0pt] {$\Delta_{\ms{X}/X}$} (12) (12) edge (13) (12) edge (22) (13) edge (23) (22) edge node[below=0pt] {$\Delta_{X/S}$} (23); \end{tikzpicture} \end{center} By standard limit arguments (and since good moduli space morphisms are preserved under arbitrary base change \cite[4.7 (i)]{ALPER-GMSFAS}), we may assume that $S = \Spec A$ for a strictly henselian local ring $A$ with maximal ideal $\mf{m}$ and residue field $k := A/\mf{m}$. \par For $n \in \N$, set $X_{n} := X \times_{\Spec A} \Spec A/\mf{m}^{n+1}$ and $\ms{X}_{n} := \ms{X} \times_{\Spec A} \Spec A/\mf{m}^{n+1}$. Given a 1-twisted vector bundle $\ms{E}_{n}$ of rank $r$ on $\ms{X}_{n}$, the obstruction to lifting $\ms{E}_{n}$ to a 1-twisted vector bundle $\ms{E}_{n+1}$ of rank $r$ on $\ms{X}_{n+1}$ is contained in $\H^{2}(\ms{X}_{n},\mf{m}^{n}\mc{O}_{\ms{X}_{n}}^{\oplus r^{2}})$, which vanishes by \cite[3.5]{ALPER-GMSFAS}. Let $A^{\wedge}$ denote the completion of $A$; by formal GAGA for good moduli space morphisms \cite[Corollary 1.7]{ALPERHALLRYDH-TELSOAS2019}, we obtain a 1-twisted vector bundle $\ms{E}^{\wedge}$ on $\ms{X}_{A^{\wedge}}$ of rank $r$. By Artin approximation, we obtain the desired result. \end{proof}

\begin{lemma} \label{20190128-13} Let $S$ be a Noetherian scheme, let $G \to S$ be an affine linearly reductive group scheme. Let $\ms{X} \to \mr{B}G$ be a $\G_{m}$-gerbe such that, for every geometric point $\overline{s} \to S$, the fiber $\ms{X}_{\overline{s}} \to \mr{B}G_{\overline{s}}$ is the trivial $\G_{m}$-gerbe. Then there exists an etale cover $S' \to S$ such that $\ms{X}_{S'} \to \mr{B}G_{S'}$ is trivial, i.e. the image of $[\ms{X}]$ via \[ \H_{\fppf}^{2}(S,\G_{m}) \to \H_{\fppf}^{0}(S,\mb{R}^{2}\pi_{\ast}\G_{m}) \] is trivial. \end{lemma}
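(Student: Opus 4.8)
The plan is to deduce the lemma directly from \Cref{20190128-12}, applied with $X = \mr{B}G$ and twisting rank $r = 1$. Since the assertion is étale-local on $S$, one may assume $S = \Spec A$ is affine (and in any case \Cref{20190128-12} performs its own limit reduction to a strictly henselian local base internally).

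The first task is to check the hypotheses of \Cref{20190128-12} for $X = \mr{B}G$ over $S$. Because $G \to S$ is affine, flat and finitely presented, $\mr{B}G$ is a Noetherian algebraic stack, and its relative diagonal $\Delta_{\mr{B}G/S}$ is affine: it is an fppf form of the affine morphism $G \to S$, and affineness descends. Since $G$ is linearly reductive, the projection $\pi : \mr{B}G \to S$ is cohomologically affine, and $\pi_{\ast}\mc{O}_{\mr{B}G} = \mc{O}_{S}$ (invariant functions on $G$-torsors over an affine scheme are just functions on the base), so $\pi$ is a good moduli space morphism; this is Alper's characterization of linear reductivity. Finally, for every geometric point $\overline{s} \to S$ the fiber $\ms{X}_{\overline{s}} \to \mr{B}G_{\overline{s}}$ is by hypothesis the trivial $\G_{m}$-gerbe, and the trivial $\G_{m}$-gerbe $Y \times \mr{B}\G_{m} \to Y$ always carries the tautological $1$-twisted line bundle; thus each geometric fiber of $\ms{X} \to \mr{B}G$ admits a $1$-twisted vector bundle of rank $1$.

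\Cref{20190128-12} then yields an étale cover $S' \to S$ such that $\ms{X}_{S'}$ admits a $1$-twisted line bundle. A $\G_{m}$-gerbe that carries a $1$-twisted invertible sheaf is trivial --- such a sheaf is the same datum as a section of the gerbe, equivalently a banding-compatible morphism to $\mr{B}\G_{m}$ over the base --- so $\ms{X}_{S'} \to \mr{B}G_{S'}$ is trivial. Equivalently, the image of $[\ms{X}]$ under the Leray edge map into $\H^{0}_{\fppf}(S,\mb{R}^{2}\pi_{\ast}\G_{m})$ restricts to $0$ on the étale cover $S' \to S$, and since $\mb{R}^{2}\pi_{\ast}\G_{m}$ is a sheaf this image is already $0$ on $S$.

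The substantive input here is entirely contained in \Cref{20190128-12} (deformation theory along the good moduli space morphism together with formal GAGA and Artin approximation); what remains, and where some care is needed, is the bookkeeping: (i) invoking Alper's theorem so that $\mr{B}G \to S$ really is a good moduli space morphism, and (ii) the two-way dictionary between ``trivial $\G_{m}$-gerbe'' and ``admits a rank-$1$ $1$-twisted vector bundle'', which is what guarantees that the $r = 1$ case of \Cref{20190128-12} produces an actual trivialization rather than merely a class in the image of the Brauer map.
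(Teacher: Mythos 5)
Your proposal is correct and takes essentially the same route as the paper: the paper's proof simply cites Alper's result that $\mr{B}G \to S$ is a good moduli space morphism and applies \Cref{20190128-12} with $r=1$. The additional bookkeeping you supply (affineness of the diagonal, and the equivalence between admitting a $1$-twisted line bundle and triviality of the $\G_{m}$-gerbe) is exactly what the paper leaves implicit, and it is carried out correctly.
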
 \begin{proof} We have that $\mr{B}G \to S$ is a good moduli space morphism \cite[12.2]{ALPER-GMSFAS}; thus we have the result by taking $r=1$ in \Cref{20190128-12}. \end{proof}

\begin{theorem}[Mathur] \label{20190912-06} Let $\ms{X}$ be a Noetherian algebraic stack with affine diagonal. Suppose there exists a good moduli space $\ms{X} \to X$. For any $c \in \Br' \ms{X}$, there is an etale surjection $X' \to X$ such that $c|_{\ms{X} \times_{X} X'}$ lies in the image of the Brauer map. \end{theorem}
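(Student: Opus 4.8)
The plan is to rephrase everything in terms of $\G_{m}$-gerbes and then feed the situation into \Cref{20190128-12}. Let $\ms{Y} \to \ms{X}$ be the $\G_{m}$-gerbe representing the class $c \in \Br'\ms{X}$. By the standard dictionary between cohomological Brauer classes and twisted sheaves, $c$ lies in $\im\alpha_{\ms{X}}$ if and only if $\ms{Y}$ carries a $1$-twisted locally free sheaf of positive rank, and this criterion commutes with base change along $X$-schemes; so it suffices to produce an \'etale surjection $X' \to X$ over which $\ms{Y}$ acquires a $1$-twisted vector bundle. I would obtain this from \Cref{20190128-12}, applied with (in the notation of that lemma) $S$ replaced by $X$, with $X$ replaced by $\ms{X}$, and with $\ms{X}$ replaced by $\ms{Y}$: the hypotheses ``Noetherian with affine diagonal'', ``admits a good moduli space'', and ``is a $\G_{m}$-gerbe over it'' are precisely our standing assumptions on $\ms{X} \to X$ together with the construction of $\ms{Y}$, and the diagram in the proof of \Cref{20190128-12} already records that $\ms{Y}$ then automatically has affine diagonal and that $\ms{Y} \to X$ is again a good moduli space. (The reduction to a strictly henselian local base is also carried out there.)

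The substantive point is the fibrewise input demanded by \Cref{20190128-12}: for each geometric point $\overline{s} \to X$ one must check that $\ms{Y}_{\overline{s}} \to \ms{X}_{\overline{s}}$ admits a $1$-twisted vector bundle, equivalently that $c|_{\ms{X}_{\overline{s}}}$ lies in $\im\alpha_{\ms{X}_{\overline{s}}}$. Now $\ms{X}_{\overline{s}}$ is a Noetherian algebraic stack with affine diagonal over a separably closed field $\kappa$, and since good moduli space morphisms are stable under base change, $\ms{X}_{\overline{s}} \to \Spec\kappa$ is a good moduli space; in particular its unique closed point has linearly reductive stabilizer $G$. By the Luna-type local structure theorem of Alper--Hall--Rydh (together with the facts that \'etale covers of $\Spec\kappa$ split and that one may induce the $G$-action up along an embedding $G \hookrightarrow \GL_{N}$ without losing affineness, $G$ being linearly reductive), $\ms{X}_{\overline{s}} \simeq [\Spec B/\GL_{N}]$ for some $N$ and some $\GL_{N}$-algebra $B$ of finite type over $\kappa$. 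Such a quotient stack has the resolution property --- a $\GL_{N}$-equivariant coherent sheaf on the affine scheme $\Spec B$ is generated by finitely many elements spanning a finite-dimensional $\GL_{N}$-subrepresentation $V$, hence is a quotient of $B \otimes_{\kappa} V$ --- and a quasi-compact algebraic stack with affine diagonal and the resolution property satisfies $\Br = \Br'$. (Briefly: because $c$ is torsion, there is a nonzero coherent $1$-twisted sheaf on $\ms{Y}_{\overline{s}}$; checking that $\ms{Y}_{\overline{s}}$ inherits the resolution property, one writes that sheaf as a quotient of a vector bundle and takes its weight-$1$ direct summand, which is the desired $1$-twisted vector bundle.)

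With the fibrewise statement in hand I would conclude by applying \Cref{20190128-12} one point of $X$ at a time: for $x \in |X|$, base change to $\Spec\mc{O}_{X,x}^{\mathrm{sh}}$ --- whose residue field is the separable closure of $\kappa(x)$, so its closed fibre is $\ms{X}_{\overline{s}}$ for a geometric point $\overline{s}$ over $x$ --- and apply \Cref{20190128-12} there, noting that its proof only ever uses the hypothesis on the closed fibre (and that over a strictly henselian local base the local structure theorem supplies a single $\GL_{N}$, hence an admissible common rank, for all fibres). This yields an \'etale neighbourhood of $x$ on which $c$ lies in the image of the Brauer map; since $|X|$ is quasi-compact, finitely many such neighbourhoods cover it, and their disjoint union is the required \'etale surjection $X' \to X$. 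The main obstacle is the fibrewise computation in the second paragraph: one must invoke the local structure theorem in exactly the form that makes $\ms{X}_{\overline{s}}$ a \emph{global} quotient by $\GL_{N}$ rather than merely one \'etale-locally on its good moduli space, and the implication ``resolution property $\Rightarrow \Br = \Br'$'', though standard, deserves to be spelled out; the remaining bookkeeping --- matching the rank $r$ in \Cref{20190128-12} and the gluing --- is routine.
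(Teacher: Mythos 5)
Your overall skeleton matches the paper's proof: represent $c$ by a $\G_{m}$-gerbe, verify fibrewise over geometric points of $X$ that a $1$-twisted vector bundle exists, and conclude with \Cref{20190128-12}. The problem is the fibrewise step. You apply the Alper--Hall--Rydh structure theorem to the \emph{base} fiber $\ms{X}_{\overline{s}} \simeq [\Spec B/\GL_{N}]$ and then rely on the chain ``$\ms{X}_{\overline{s}}$ has the resolution property, $\ms{Y}_{\overline{s}}$ inherits it, hence a coherent $1$-twisted sheaf is a quotient of a vector bundle whose weight-$1$ summand does the job.'' The inheritance step is not a routine check; it is essentially the statement to be proved. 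The resolution property of $\ms{X}_{\overline{s}}$ only controls weight-$0$ sheaves on the gerbe, and says nothing about weight-$1$ (i.e.\ twisted) sheaves; by \cite[3.6]{EdHaKrVi} together with the Totaro--Gross characterization of the resolution property, ``$\ms{Y}_{\overline{s}}$ has the resolution property'' is equivalent to ``$c|_{\ms{X}_{\overline{s}}}$ lies in the image of the Brauer map,'' which is exactly the conclusion you want. Likewise, the intermediate principle you invoke --- ``a quasi-compact algebraic stack with affine diagonal and the resolution property satisfies $\Br=\Br'$'' --- is not standard: already for schemes, Gabber's theorem requires an ample line bundle, and the resolution property alone (e.g.\ for smooth separated non-quasi-projective varieties) is not known to suffice. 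So as written the second paragraph begs the question.

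The repair is to apply the structure theorem to the gerbe fiber itself, which is what the paper does: $\ms{Y}_{\overline{s}}$ is Noetherian, has affine diagonal, and $\ms{Y}_{\overline{s}} \to \Spec \kappa$ is a good moduli space (all of this is recorded in the proof of \Cref{20190128-12}), so by \cite[Corollary 2.10]{ALPER-HALL-RYDH-LUNA-ETALE-SLICE-THEOREM} the gerbe $\ms{Y}_{\overline{s}}$ is a quotient stack; then \cite[3.6]{EdHaKrVi} (equivalently, Thomason's equivariant resolution property applied \emph{on the gerbe}, followed by your weight-decomposition argument) produces the $1$-twisted vector bundle. Your concluding paragraph (pointwise application of \Cref{20190128-12} over strict henselizations, then quasi-compactness of $|X|$ to assemble $X'$) is fine and in fact handles the fixed-rank issue in \Cref{20190128-12} more carefully than the paper's one-line citation, but it does not compensate for the gap above.
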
 \begin{proof} Let $\mc{G} \to \ms{X}$ be a $\G_{m}$-gerbe. Let $\overline{x} \to X$ be a geometric point, the fiber $\mc{G}_{X_{\overline{x}}}$ is a quotient stack by \cite[Corollary 2.10]{ALPER-HALL-RYDH-LUNA-ETALE-SLICE-THEOREM}. By \cite[3.6]{EdHaKrVi}, the class $[\mc{G}_{X_{\overline{x}}}] \in \H^{2}(\ms{X}_{X_{\overline{x}}},\G_{m})$ lies in the image of the Brauer map of $\ms{X}_{X_{\overline{x}}}$, hence $\mc{G}_{X_{\overline{x}}}$ admits a 1-twisted vector bundle; then we have the desired result by \Cref{20190128-12}. \end{proof}

\begin{corollary} \label{20190912-01} Let $k$ be a base ring, let $G$ be an affine linearly reductive $k$-group scheme acting on a Noetherian $k$-algebra $A$, let $\ms{X} := [(\Spec A)/G]$ be the quotient stack, let $\ms{X} \to X := \Spec A^{G}$ be the good moduli space morphism. If $A^{G}$ is henselian local, then $\Br = \Br'$ for $\ms{X}$. \end{corollary}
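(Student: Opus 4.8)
The plan is to combine Mathur's \Cref{20190912-06} with the hypothesis that $A^{G}$ is henselian local: the point is that \Cref{20190912-06} makes any Brauer class come from an Azumaya algebra étale-locally on the good moduli space, and over a henselian local base such an étale cover can be refined to a finite flat cover, after which \Cref{20160630-wr-01} applies.

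First I would check that $\ms{X} = [(\Spec A)/G]$ satisfies the hypotheses of \Cref{20190912-06}. It is Noetherian because $A$ is a Noetherian $k$-algebra. It has affine diagonal because $\Spec A$ and $G$ are affine: concretely, $\ms{X} \to \mr{B}G$ is representable by the affine scheme $\Spec A$, hence affine, while $\mr{B}G$ has affine diagonal as $G$ is affine, so the composite diagonal of $\ms{X}$ is affine. Finally, $\ms{X} \to X = \Spec A^{G}$ is a good moduli space morphism by hypothesis. Hence, given any $c \in \Br' \ms{X}$, \Cref{20190912-06} produces an étale surjection $X' \to X$ (which we may take to be a scheme) with $c|_{\ms{X} \times_{X} X'} \in \im \alpha_{\ms{X} \times_{X} X'}$.

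Next I would refine $X' \to X$ to a finite flat surjection. Fix a point $x' \in X'$ lying over the closed point of the local scheme $X = \Spec A^{G}$, and replace $X'$ by an affine open neighborhood of $x'$; since étale morphisms are open, the image of this neighborhood is an open subset of $X$ containing the closed point, hence all of $X$, so $X'$ remains étale and surjective over $X$, and is now also quasi-finite and separated. Because $X$ is henselian local, the structure theory of quasi-finite separated morphisms over a henselian local scheme (a form of Zariski's main theorem; see e.g.\ \cite[Tag 04GK]{SP}) gives a decomposition $X' = X'_{0} \sqcup X''$ into open-and-closed subschemes with $X'_{0} \to X$ finite and $X''$ having empty fibre over the closed point. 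Then $X'_{0} \to X$ is finite and étale (being open in $X'$), hence finite locally free, and its image is open-and-closed in $X$ and contains the closed point, so $X'_{0} \to X$ is a finite, flat, finitely presented, surjective morphism. Pulling back an Azumaya algebra representing $c|_{\ms{X} \times_{X} X'}$ along the open immersion $\ms{X} \times_{X} X'_{0} \hookrightarrow \ms{X} \times_{X} X'$ shows $c|_{\ms{X} \times_{X} X'_{0}} \in \im \alpha_{\ms{X} \times_{X} X'_{0}}$. Since $\ms{X} \times_{X} X'_{0} \to \ms{X}$ is the base change of $X'_{0} \to X$ along $\ms{X} \to X$, it is again finite, flat, finitely presented, and surjective, so \Cref{20160630-wr-01} gives $c \in \im \alpha_{\ms{X}}$, as desired.

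The main obstacle is the middle step: \Cref{20190912-06} only outputs an uncontrolled étale cover, and one must exploit henselianness to split off a finite flat piece — this is where Zariski's main theorem over henselian local rings enters — and then transport the Brauer-map membership along the resulting open immersion and along the finite flat morphism. (If $A^{G}$ were strictly henselian one could instead arrange a section of $X' \to X$ and quote \Cref{20160630-wr-02}, but in general the special fibre of $X'$ need not carry a rational point over the residue field.)
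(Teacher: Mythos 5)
Your proposal is correct and follows essentially the same route as the paper: invoke Mathur's \Cref{20190912-06} to get an \'etale surjection $X' \to X$ on which the class becomes Azumaya, use henselianness of $A^{G}$ to replace $X'$ by a finite \'etale piece over $X$, and conclude with \Cref{20160630-wr-01}. The only difference is that you spell out the henselian refinement (via the structure theory of quasi-finite separated morphisms) and the verification of the hypotheses of \Cref{20190912-06}, which the paper leaves implicit.
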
 \begin{proof} Let $\mc{G} \to \ms{X}$ be a $\G_{m,\ms{X}}$-gerbe. By \Cref{20190912-06} there exists an etale surjection $X' \to X$ such that $\mc{G}_{X'}$ admits a 1-twisted vector bundle; since $A^{G}$ is henselian, we may replace $X'$ by a connected component and assume that $X' \to X$ is finite etale; then we have the desired result by \Cref{20160630-wr-01}. \end{proof}

\begin{remark} \label{20190912-08} In particular, \Cref{20190912-01} implies $\Br = \Br'$ for any quotient stack admitting a good moduli space which is the spectrum of a field, e.g. $\ms{X} = [\A_{k}^{n}/\GL_{n}]$ (in characteristic $0$) or $\ms{X} = [\A_{k}^{n}/\G_{m}^{\times n}]$. \end{remark}

\begin{question} Let $\ms{X}$ be an algebraic stack admitting a good moduli space $\pi : \ms{X} \to X$. Is the map \[ \pi^{\ast} : \H_{\fppf}^{2}(X,\G_{m}) \to \H_{\fppf}^{2}(\ms{X},\G_{m}) \] surjective? \Cref{20170920-01} and \Cref{20170920-06} (when $X$ is normal) provide evidence to suggest that the answer is ``yes''. In \cite{MEIER-CBGVCM}, Meier gives a criterion which is sufficient for the vanishing of $\mb{R}^{2}\pi_{\ast}\G_{m}$. (Note that the answer is ``no'' for separated Deligne-Mumford stacks that are not tame, for example $\Br(\A_{\overline{\F_{2}}}^{1}) = 0$ but $\Br(\ms{M}_{1,1,\overline{\F_{2}}}) = \Z/(2)$, see \cite{SHIN-TBGOTMSOECOACFOC2}.) \end{question}

\section{The classifying stack of diagonalizable groups} \label{sec-diagonalizable}

We consider the Brauer groups of classifying stacks by diagonalizable groups. We first describe the unit group and the Picard group of split tori under mild hypotheses on the base scheme.

\begin{lemma}[Sheaf of units on $\G_{m}^{\times n}$ over an integral scheme] \label{20151026-05} Let $X$ be an integral scheme. Then the canonical map \[ \Gamma(X,\G_{m,X}) \oplus \Z^{\oplus n} \to \Gamma(X \times_{\Z} \G_{m,\Z}^{\times n} , \G_{m}) \] is an isomorphism. \end{lemma}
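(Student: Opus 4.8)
The plan is to reduce to the case where $X$ is affine and then to a computation with unique factorization. First I would reduce to the affine case: both sides commute with taking an open affine cover and agree on overlaps (since $X$ is integral, so is every open subscheme, and the formation of $\Gamma(X,\G_m)$ is a sheaf condition), so it suffices to treat $X = \Spec R$ for an integral ring $R$. Write $K$ for the fraction field of $R$. The ring of functions on $X \times_{\Z} \G_{m,\Z}^{\times n}$ is the Laurent polynomial ring $R[t_1^{\pm},\dotsc,t_n^{\pm}]$, and we must show that every unit of this ring is of the form $u \cdot t_1^{a_1}\dotsb t_n^{a_n}$ with $u \in R^{\times}$ and $(a_1,\dotsc,a_n) \in \Z^{\oplus n}$; the map in the statement sends $(u, (a_i))$ precisely to this element, and injectivity is clear by comparing the $R^\times$-part and reading off the exponents, so the content is surjectivity.

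For surjectivity I would argue by induction on $n$, the case $n = 0$ being trivial. For the inductive step it is enough (by viewing $R[t_1^{\pm},\dotsc,t_n^{\pm}] = \bigl(R[t_1^{\pm},\dotsc,t_{n-1}^{\pm}]\bigr)[t_n^{\pm}]$, which is a Laurent polynomial ring in one variable over an integral domain) to prove the case $n = 1$ over an arbitrary integral domain. So let $f \in R[t^{\pm}]$ be a unit; after multiplying by a suitable power of $t$ we may assume $f \in R[t]$ with nonzero constant term, say $f = a_0 + a_1 t + \dotsb + a_m t^m$ with $a_m \neq 0$, and similarly its inverse $g \in R[t]$ has some top-degree coefficient $b_\ell \neq 0$. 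Comparing top-degree coefficients of $fg = 1$ forces $a_m b_\ell = 0$ in $R$ unless $m = \ell = 0$; since $R$ is a domain this gives $m = \ell = 0$, so $f = a_0 \in R^\times$, as desired.

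The step I expect to be the genuine (if mild) obstacle is the affine reduction/sheaf-theoretic bookkeeping: making precise that $\Gamma(-,\G_m)$ on $X \times_{\Z}\G_m^{\times n}$ is computed locally on $X$ and that the stated map is compatible with restriction, so that a local isomorphism glues to a global one. Once one is in the affine case over a domain, the computation above is elementary and the only subtlety is remembering to reduce Laurent polynomials to ordinary polynomials by clearing powers of the $t_i$. An alternative to the induction, which I might mention, is to use that for a domain $R$ with fraction field $K$ one has $R[t_1^{\pm},\dotsc,t_n^{\pm}]^\times \subseteq K[t_1^{\pm},\dotsc,t_n^{\pm}]^\times = K^\times \cdot t_1^{\Z}\dotsb t_n^{\Z}$ (the latter because $K[t_1^{\pm},\dotsc,t_n^{\pm}]$ is a localization of the UFD $K[t_1,\dotsc,t_n]$), and then to check that a unit $u \cdot t_1^{a_1}\dotsb t_n^{a_n}$ of the larger ring lies in the smaller one exactly when $u \in R^\times$, by evaluating at $t_i = 1$.
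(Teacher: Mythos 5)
Your proposal is correct and follows essentially the same route as the paper: reduce to the affine case over a domain and then glue along an affine open cover using integrality. The emphasis is complementary — the paper takes the affine statement as known and spends its proof on the gluing (an explicit equalizer diagram and diagram chase, where irreducibility guarantees the pairwise intersections are nonempty so the exponent vectors in $\Z^{\oplus n}$ match up), whereas you prove the affine case in detail and sketch the gluing; both halves are needed and your sketch of the gluing is in the right spirit. One small point to tidy in your $n=1$ step: once you clear powers of $t$ from $f$ and from its inverse $g$ separately so that both lie in $R[t]$ with nonzero constant terms, the relation is $fg = t^{N}$ rather than $fg=1$, so you should first compare constant terms (their product is nonzero since $R$ is a domain) to force $N=0$, and only then does the top-degree comparison give $m=\ell=0$; as written, the assumption ``its inverse $g \in R[t]$'' together with $fg=1$ is not quite available.
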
 \begin{proof} We have the result when $X$ is affine. In general, let $X = \bigcup_{i \in I} X_{i}$ be an affine open cover of $X$. We have a commutative diagram \begin{center}\begin{tikzpicture}[>=stealth] 
\matrix[matrix of math nodes,row sep=2em, column sep=2em, text height=1.5ex, text depth=0.25ex] { 
|[name=11]| 0 & |[name=12]| 0 \\ 
|[name=21]| \Gamma(X,\G_{m}) \oplus \Z^{\oplus n} & |[name=22]| \Gamma(X \times_{\Z} \G_{m,\Z}^{\times n} , \G_{m}) \\
|[name=31]| \prod_{i \in I} \Gamma(X_{i},\G_{m}) \oplus \Z^{\oplus n} & |[name=32]| \prod_{i \in I} \Gamma(X_{i} \times_{\Z} \G_{m,\Z}^{\times n} , \G_{m}) \\
|[name=41]| \prod_{i_{1},i_{2} \in I} \Gamma(X_{i_{1},i_{2}},\G_{m}) \oplus \Z^{\oplus n} & |[name=42]| \prod_{i_{1},i_{2} \in I} \Gamma(X_{i_{1},i_{2}} \times_{\Z} \G_{m,\Z}^{\times n} , \G_{m}) \\
}; 
\draw[->,font=\scriptsize]
(11) edge (21) (21) edge (31) (31) edge (41) (12) edge (22) (22) edge (32) (32) edge (42)
(21) edge node[above=-1pt] {$f_{1}$} (22) (31) edge node[above=-1pt] {$f_{2}$} (32) (41) edge node[above=-1pt] {$f_{3}$} (42); \end{tikzpicture} \end{center} where the columns are equalizer sequences. By the affine case, we have that $f_{2}$ is an isomorphism; hence $f_{1}$ is an injection. Applying this argument to $X_{i_{1},i_{2}}$, we have that $f_{3}$ is an injection. Thus $f_{1}$ is an isomorphism by a diagram chase. \end{proof}

\begin{lemma} \label{20170920-07} Let $\mr{M}$ be a finitely generated torsion-free abelian group, let \[ \mb{T} := \mr{D}(\mr{M})_{\Z} = \Spec \Z[\mr{M}] \] be the associated $\Z$-group scheme. Let $X$ be an integral scheme, let $\mr{B}\mb{T}_{X}$ be the classifying stack, and let $\xi : X \to \mr{B}\mb{T}_{X}$ be the morphism corresponding to the trivial $\mb{T}$-torsor. For $p \ge 0$, let \[ X^{p} := X \times_{\ms{X}} \dotsb \times_{\ms{X}} X \] be the $(p+1)$-fold fiber product of $X$ over $\ms{X}$. The bottom row of the cohomological descent spectral sequence gives a complex \begin{align} \label{20170920-07-eqn-01} \Gamma(X^{0},\G_{m}) \stackrel{\mr{d}^{0}}{\to} \Gamma(X^{1},\G_{m}) \stackrel{\mr{d}^{1}}{\to} \Gamma(X^{2},\G_{m}) \stackrel{\mr{d}^{2}}{\to} \Gamma(X^{3},\G_{m}) \to \dotsb \end{align} of abelian groups. Then \labelcref{20170920-07-eqn-01} is acyclic in degrees $p \ge 1$. \end{lemma}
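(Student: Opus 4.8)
The plan is to identify the complex \labelcref{20170920-07-eqn-01} explicitly using the fact that, for the classifying stack $\mr{B}\mb{T}_X$, the $(p+1)$-fold fiber product $X^p$ of $X$ over $\mr{B}\mb{T}_X$ is canonically $X \times_{\Z} \mb{T}^{\times p}_{\Z} = X \times_{\Z} \G_{m,\Z}^{\times (np)}$, where $n = \rank \mr{M}$ (this is the standard computation of the nerve of a $\mb{T}$-torsor, exactly as in \Cref{20151211-04}). First I would apply \Cref{20151026-05} to each $X^p$: since $\mb{T}^{\times p}$ is a split torus of rank $np$, we get $\Gamma(X^p, \G_m) \simeq \Gamma(X,\G_m) \oplus \Z^{\oplus np}$, naturally in $p$, with the $\Z^{\oplus np}$ factor being the character lattice $\mr{M}^{\oplus p}$ of $\mb{T}^{\times p}$. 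Thus the complex \labelcref{20170920-07-eqn-01} decomposes as a direct sum of two complexes of abelian groups: the constant complex with term $\Gamma(X,\G_m)$ in every degree, whose differentials are the alternating sums of the (co)degeneracy-induced identity maps, and the complex $\mr{M} \to \mr{M}^{\oplus 2} \to \mr{M}^{\oplus 3} \to \dotsb$ coming from the character-lattice summands.

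Next I would recognize these two complexes as bar-type complexes computing group cohomology. The point is that the simplicial structure here is exactly that of the nerve of $\mb{T}$ acting trivially (on the level of character lattices / constant sections), so the first summand is the complex computing $\H^\bullet(\mb{T}, \Gamma(X,\G_m))$ in the sense of the cochain complex with $\mb{T}^{\times p}$ in degree $p$ — but because the coefficient module has trivial action and we are looking at \emph{constant} sections, this is the complex computing $\H^\bullet$ of a point, i.e. it is acyclic in positive degrees with $\H^0 = \Gamma(X,\G_m)$. More precisely, the constant complex $M \xrightarrow{0} M \xrightarrow{\id} M \xrightarrow{0} M \to \dotsb$ (alternating sum of $p+1$ identical coface maps) is the standard contractible augmented complex, acyclic in all degrees $\ge 1$. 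For the second summand, the complex $\mr{M}^{\oplus 0} \to \mr{M}^{\oplus 1} \to \mr{M}^{\oplus 2} \to \dotsb$ with the simplicial differentials is precisely (a shift of) the normalized bar resolution computing $\H^\bullet(\Z^n, \mr{M})$ — equivalently, it is $\mr{M} \otimes_\Z$ (the cellular cochain complex of the $n$-torus $(S^1)^n$ in its standard CW structure). Either way, the relevant fact is that its cohomology in degrees $\ge 1$ vanishes: this is cleanest to see by noting the second summand is the tensor product over $1 \le j \le n$ of the complex $(\Z \xrightarrow{0} \Z)$ shifted appropriately, but the honest statement I need is just that \emph{the augmented complex $0 \to \mr{M} \to \mr{M}^{\oplus 1} \to \mr{M}^{\oplus 2} \to \dotsb$ is exact}, which I would verify directly from the explicit simplicial coface/codegeneracy formulas (this is the "co-bar" resolution of the trivial module and is split exact).

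The cleanest packaging: the whole complex \labelcref{20170920-07-eqn-01} is the complex of global sections of the sheaf $\G_m$ along the simplicial scheme $X^\bullet = X \times_{\Z} \mb{T}^{\times \bullet}_{\Z}$, i.e. the $E_1^{\bullet, 0}$ row of the descent spectral sequence, whose cohomology by definition computes $\H^p(\mr{B}\mb{T}_X, \G_m)$ contributions from $q = 0$; but the $q=0$ row of this spectral sequence is, by the Hochschild–Serre description in \Cref{20151211-04} (valid since $\mb{T}$ is flat affine and the argument of \Cref{20151026-05} lets us split off the character-lattice part), the cochain complex computing $\H^p(\mb{T}, \Gamma(X, \G_m)\text{-type coefficients})$, and for a \emph{split torus} acting on a constant-plus-lattice module the higher cohomology is concentrated in the lattice part, which is torsion-free and whose only contribution is in degree $0$. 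I expect the main obstacle to be the bookkeeping in the second step: correctly matching the simplicial coface maps $X^p \to X^{p-1}$ (there are $p+1$ of them, given by the groupoid structure $m, \mr{pr}_i$, and the identity section) with the differentials in the bar complex for $\Z^n$, and checking that on the $\Z^{\oplus np}$-summands these are exactly the maps making the augmented complex split exact — in particular that the "translation" cofaces act trivially on the constant-section summand $\Gamma(X,\G_m)$ (giving the contractible constant complex) and by the expected matrix on the lattice summand. Once the identification of differentials is pinned down, acyclicity in degrees $\ge 1$ is immediate from the explicit contracting homotopy of the (co)bar resolution.
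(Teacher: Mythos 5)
Your setup is the same as the paper's: identify $X^{p} \simeq X \times \mb{T}^{\times p}$, apply \Cref{20151026-05} to get $\Gamma(X^{p},\G_{m}) \simeq \Gamma(X,\G_{m}) \oplus \mr{M}^{\oplus p}$ compatibly with the simplicial maps, and split \labelcref{20170920-07-eqn-01} into a constant summand and a character summand. The gap is in how you dispose of the character summand. Its degree-$p$ term is the character group $\Hom(\mb{T}^{\times p},\G_{m}) = \mr{M}^{\oplus p}$ (rank $np$ if $\rank \mr{M} = n$), so it is \emph{not} the bar complex computing $\H^{\bullet}(\Z^{n},\mr{M})$, nor $\mr{M} \otimes_{\Z}$ the cellular cochain complex of $(S^{1})^{n}$, nor a tensor product of two-term complexes $(\Z \xrightarrow{0} \Z)$: the terms do not match, and those complexes have cohomology $\mr{M} \otimes \Lambda^{\bullet}\Z^{n} \ne 0$ in degrees $1,\dotsc,n$, so if any of these identifications were correct it would \emph{contradict} the acyclicity you want. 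What survives of your plan is the deferred ``verify directly from the explicit coface formulas'' --- but that explicit computation of $\mr{d}^{p}$ on $\Gamma(X,\G_{m}) \oplus \mr{M}^{\oplus p}$ and of its kernels and images is the entire content of the paper's proof, so as written you have postponed exactly the step that constitutes the argument.

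There is also a degree-count problem you should confront rather than absorb into an augmentation. The character summand is $0$ in degree $0$ (there are no characters of $\mb{T}^{0}$), while $\mr{d}^{1}$ kills every character: $d_{0}^{\ast}\chi_{a} \cdot (d_{1}^{\ast}\chi_{a})^{-1} \cdot d_{2}^{\ast}\chi_{a} = \chi_{a}(t_{2})\chi_{a}(t_{1}t_{2})^{-1}\chi_{a}(t_{1}) = 1$. Hence exactness of your augmented complex $0 \to \mr{M} \to \mr{M}^{\oplus 1} \to \mr{M}^{\oplus 2} \to \dotsb$ (which is true, with the identity as augmentation) gives $\H^{1}$ of \labelcref{20170920-07-eqn-01} isomorphic to $\mr{M}$, not $0$: on your own decomposition the complex is acyclic only in degrees $p \ge 2$, and the $\mr{M}$ in degree $1$ is unavoidable --- it is the character contribution to $\Pic(\mr{B}\mb{T}_{X})$. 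So ``acyclicity in degrees $\ge 1$ is immediate from the contracting homotopy'' cannot be right. (The paper's own concluding sentence has the same off-by-one; what is actually needed downstream in \Cref{20170920-01} is only the vanishing of $\mr{E}_{2}^{p,0}$ for $p = 2$, which exactness in degrees $p \ge 2$ supplies.) Once you correct the claimed range to $p \ge 2$ and actually carry out the differential computation, your argument coincides with the paper's.
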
 \begin{proof} We have $X^{p} \simeq X \times \mb{T}^{p}$ for all $p$. Since $X$ is an integral scheme, by \Cref{20151026-05} the map \begin{align} \label{20170920-07-eqn-02} \Gamma(X,\G_{m}) \oplus \mr{M}^{\oplus p} \to \Gamma(X \times \mb{T}^{p} , \G_{m}) \end{align} is an isomorphism. With the identification \labelcref{20170920-07-eqn-02}, the differential $\mr{d}^{p}$ is the alternating sum of $p+2$ maps, each of which is the identity on the $\Gamma(X,\G_{m})$ summand; the map $\mr{M}^{\oplus p} \to \mr{M}^{\oplus (p+1)}$ is given by the formula \[ \textstyle \mr{d}^{p}([a_{1},\dotsc,a_{p}]) = [0,a_{1},\dotsc,a_{p}] - (\sum_{i=1}^{p} (-1)^{i} [a_{1},\dotsc,a_{i},a_{i},\dotsc,a_{p}]) + (-1)^{p+1} [a_{1},\dotsc,a_{p},0] \] where ``$[a_{1},\dotsc,a_{i},a_{i},\dotsc,a_{p}]$'' is the vector obtained by replacing ``$a_{i}$'' with ``$a_{i},a_{i}$'' in $[a_{1},\dotsc,a_{p}]$. A computation shows that if $p$ is odd, then the image of $[a_{1},\dotsc,a_{p}]$ under $\mr{d}^{p}$ is given by \[ [0,a_{2},a_{2},a_{4},a_{4},\dotsc,a_{p-1},a_{p-1},0] \] and if $p$ is even, then the image of $\mr{d}^{p}$ is given by \[ [-a_{1},0,a_{2}-a_{3},0,a_{4}-a_{6},0,\dotsc,0,a_{p-2}-a_{p-1},0,a_{p}] \] which gives exactness for $p \ge 1$. \end{proof}

\begin{lemma} \label{20170930-07} \cite[5.10]{BASSMURTH-GGAPGOAGR} Let $A$ be a Noetherian normal ring. Then the pullback \[ \Pic(A) \to \Pic(A[t^{\pm}]) \] is an isomorphism. \end{lemma}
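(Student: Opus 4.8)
Although this lemma is quoted above from \cite[5.10]{BASSMURTH-GGAPGOAGR}, let me describe the approach I would take. First I would reduce to the case where $A$ is a normal domain: a Noetherian normal ring is reduced with finitely many minimal primes, and normality forces every prime to contain a \emph{unique} minimal prime, so the minimal primes are pairwise comaximal and $A$ is a finite product of Noetherian normal domains; since both $\Pic(-)$ and $(-)[t^{\pm}]$ commute with finite products, it suffices to treat a single normal domain. The augmentation $A[t^{\pm}] \to A$, $t \mapsto 1$, then provides a retraction of the pullback $\Pic(A) \to \Pic(A[t^{\pm}])$, so this map is split injective and only surjectivity remains.

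For surjectivity I would pass through the divisor class group. Recall that for a Noetherian normal domain $B$ there is a natural injection $\Pic(B) \hookrightarrow \Cl(B)$ sending an invertible module to its class as a divisorial ideal, that every rank-one reflexive $B$-module is isomorphic to a divisorial ideal, and that two divisorial ideals with the same class in $\Cl(B)$ are isomorphic as $B$-modules. The classical Krull-domain computations (Samuel, Fossum) give $\Cl(A) \xrightarrow{\sim} \Cl(A[t])$; and writing $A[t^{\pm}] = S^{-1}A[t]$ with $S = \{1,t,t^{2},\dotsc\}$, the only height-one prime of $A[t]$ meeting $S$ is the principal prime $(t)$ (any height-one prime containing $t$ equals $(t)$, since $A[t]/(t) \cong A$ is a domain), so the localization map $\Cl(A[t]) \to \Cl(A[t^{\pm}])$ is an isomorphism as well; composing gives $\Cl(A) \xrightarrow{\sim} \Cl(A[t^{\pm}])$.

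Now let $L$ be an invertible $A[t^{\pm}]$-module. It is reflexive of rank one, hence isomorphic to a divisorial ideal, and its class in $\Cl(A[t^{\pm}])$ is the image of $[J]$ for a divisorial ideal $J \subseteq A$. Since $A$ is Noetherian, $J$ and $\Hom_A(J,A)$ are finitely presented, so $(-)^{\vee\vee}$ and $\Hom_A(-,A)$ commute with the flat base change along $A \to A[t^{\pm}]$; in particular $J[t^{\pm}] := J \otimes_A A[t^{\pm}]$ is again a divisorial ideal and represents the image of $[J]$, whence $L \cong J[t^{\pm}]$ as $A[t^{\pm}]$-modules. It remains to check that $J$ is invertible over $A$: the evaluation map $J \otimes_A \Hom_A(J,A) \to A$ base-changes to the corresponding evaluation map for $J[t^{\pm}] \cong L$, which is an isomorphism because $L$ is invertible; by faithfully flat descent along $A \to A[t^{\pm}]$, the map $J \otimes_A \Hom_A(J,A) \to A$ is itself an isomorphism, so $J \in \Pic(A)$. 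Since $[J] \mapsto [L]$, surjectivity follows.

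The only genuinely external inputs are the two Krull-domain class-group identifications $\Cl(A) = \Cl(A[t]) = \Cl(A[t^{\pm}])$, which I would quote from Fossum's book rather than reprove. Everything else is routine bookkeeping with Noetherianness and flatness; the one point to handle carefully is that $J \otimes_A A[t^{\pm}]$ really is a divisorial ideal representing the correct class (so that ``divisorial ideals with the same class are isomorphic'' may be applied), which is exactly where the commutation of reflexive closure with flat base change is used. Note also that normality is essential throughout: it is what makes $\Pic \hookrightarrow \Cl$ available, and without it the statement fails. I do not expect any serious obstacle beyond the base-change bookkeeping just described.
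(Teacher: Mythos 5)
Your argument is correct, but it follows a genuinely different route from the paper's. The paper also first reduces to a Noetherian normal domain (via connected components), but then it quotes Weibel's ``contracted functor'' exact sequence $0 \to \Pic(A) \to \Pic(A[t]) \oplus \Pic(A[t^{-1}]) \to \Pic(A[t^{\pm}]) \to \mr{LPic}(A) \to 0$, identifies $\mr{LPic}(A)$ with $\H^{1}_{\fppf}(\Spec A,\Z)$, and kills that group by the SGA7 vanishing of $\H^{1}(\Spec A,\Z)$ for geometrically unibranch rings. You instead stay entirely within classical Krull-domain divisor theory: $\Pic \hookrightarrow \Cl$ for a normal domain, the invariance $\Cl(A) \simeq \Cl(A[t])$, Nagata's localization theorem to get $\Cl(A[t]) \simeq \Cl(A[t^{\pm}])$ (the only height-one prime meeting $\{t^{n}\}$ being the principal prime $(t)$), and then a descent of invertibility along the faithfully flat map $A \to A[t^{\pm}]$, using that reflexive closure, $\Hom$, and the evaluation map all commute with flat base change for finitely presented modules. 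The bookkeeping you flag (that $J \otimes_{A} A[t^{\pm}]$ is already divisorial and represents the image class, so that ``same class implies isomorphic'' applies) is exactly the right point to be careful about, and your treatment of it is sound; the product decomposition of a Noetherian normal ring and the retraction $t \mapsto 1$ for injectivity are also fine. What each approach buys: yours is more elementary and self-contained (Fossum-style Krull-domain facts, no \'etale cohomology with constant coefficients and no $\mr{LPic}$ formalism), and it makes transparent where normality is used; the paper's is shorter given its references, isolates the precise obstruction as $\H^{1}(\Spec A,\Z)$ --- so it really only needs $A$ geometrically unibranch rather than normal --- and its use of Weibel's framework dovetails with the paper's later discussion of seminormality and $\Pic(A[\mr{M}])$ for nonnormal $A$.
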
 \begin{proof} After taking connected components, we may assume that $A$ is a Noetherian normal domain. We have an exact sequence \[ 0 \to \Pic(A) \to \Pic(A[t]) \oplus \Pic(A[t^{-1}]) \to \Pic(A[t^{\pm}]) \to \mr{LPic}(A) \to 0 \] by \cite[Lemma 1.5.1]{WEIBEL-PIACF}, and an isomorphism $\mr{LPic}(A) \simeq \H^{1}_{\fppf}(\Spec A , \Z)$ by \cite[Theorem 5.5]{WEIBEL-PIACF}; we have $\H^{1}_{\fppf}(\Spec A , \Z) = 0$ by \cite[Exp. VIII, Prop. 5.1]{SGA7-I} since $A$ is geometrically unibranch. \end{proof}

\begin{lemma} \label{20170930-06} Let $S$ be a locally Noetherian, integral scheme such that, for every point $s \in S$ of codimension 1, the local ring $\mc{O}_{S,s}$ is regular. Set $\mb{T} := \Spec \Z[t^{\pm}]$ and $\mb{T}_{S} := S \times_{\Spec \Z} \mb{T}$, and let $\pi : \mb{T}_{S} \to S$ be the projection. Then the pullback map \[ \pi^{\ast} : \Pic(S) \to \Pic(\mb{T}_{S}) \] is an isomorphism. \end{lemma}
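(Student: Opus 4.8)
The plan is to compute $\Pic(\mb{T}_{S}) = \H^{1}(\mb{T}_{S},\G_{m})$ through the Leray spectral sequence for the affine projection $\pi : \mb{T}_{S} \to S$ with coefficients in $\G_{m}$, so that everything reduces to understanding the two sheaves $\pi_{\ast}\G_{m}$ and $\mb{R}^{1}\pi_{\ast}\G_{m}$ on $S$. Injectivity of $\pi^{\ast}$ needs nothing: the identity section $e : S \to \mb{T}_{S}$ (``$t = 1$'') satisfies $\pi \circ e = \id_{S}$, so $e^{\ast}$ splits $\pi^{\ast}$. For surjectivity, the low-degree part of Leray gives an exact sequence
\begin{equation*}
0 \to \H^{1}(S,\pi_{\ast}\G_{m}) \to \H^{1}(\mb{T}_{S},\G_{m}) \to \H^{0}(S,\mb{R}^{1}\pi_{\ast}\G_{m}),
\end{equation*}
in which the first map is $\pi^{\ast}$ once $\H^{1}(S,\pi_{\ast}\G_{m})$ is identified with $\Pic(S)$. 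Hence it is enough to establish that identification and the vanishing $\mb{R}^{1}\pi_{\ast}\G_{m} = 0$.

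For the first: because $S$ is integral, so is every nonempty open $V \subseteq S$, and \Cref{20151026-05} with $n = 1$ furnishes a canonical isomorphism $\Gamma(V,\G_{m}) \oplus \Z \xrightarrow{\sim} \Gamma(\mb{T}_{V},\G_{m})$, natural in $V$; therefore $\pi_{\ast}\G_{m} \cong \G_{m,S} \oplus \underline{\Z}_{S}$. On the irreducible space $S$ the constant sheaf $\underline{\Z}_{S}$ is flasque, so $\H^{i}(S,\underline{\Z}_{S}) = 0$ for $i > 0$, whence $\H^{1}(S,\pi_{\ast}\G_{m}) \cong \H^{1}(S,\G_{m,S}) = \Pic(S)$. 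For the second: $\mb{R}^{1}\pi_{\ast}\G_{m}$ is the sheafification of $V \mapsto \Pic(\mb{T}_{V})$, and since $\Pic$ commutes with filtered colimits of rings its stalk at $s \in S$ is $\varinjlim_{V \ni s}\Pic(\mb{T}_{V}) = \Pic(\mc{O}_{S,s}[t^{\pm}])$. If $s$ has codimension $0$ or $1$ the local ring $\mc{O}_{S,s}$ is regular — hence a Noetherian normal ring — so \Cref{20170930-07} gives $\Pic(\mc{O}_{S,s}[t^{\pm}]) \cong \Pic(\mc{O}_{S,s}) = 0$.

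The main obstacle is the behaviour of $\mb{R}^{1}\pi_{\ast}\G_{m}$ at points of codimension $\ge 2$, i.e. the vanishing of $\H^{0}(S,\mb{R}^{1}\pi_{\ast}\G_{m})$: concretely one must show that a line bundle $\mc{L}$ on $\mb{T}_{S}$ which is trivial along $e$, trivial on the generic fibre $\mb{T}_{K}$ (for $K$ the function field of $S$; note $K[t^{\pm}]$ is a localization of a principal ideal domain, so has trivial Picard group), and trivial on each $\mb{T}_{\mc{O}_{S,s}}$ with $s$ of codimension $\le 1$, is globally trivial. I would pick a rational trivialization $\sigma$ of $\mc{L}$ (it exists, $\mc{L}$ being generically trivial), note that its associated Weil divisor on $\mb{T}_{S}$ — defined since $\mb{T}_{S}$ is integral and regular in codimension $1$, being smooth over $S$ — vanishes, because $\mc{L}$ is trivial at every codimension-$1$ point of $\mb{T}_{S}$; and then the crux is to upgrade $\sigma$ to a global nowhere-vanishing regular function by a Hartogs-type extension across the locus (of codimension $\ge 2$) where $\mb{T}_{S}$ may fail to be regular, which gives $\mc{L} \cong \mc{O}_{\mb{T}_{S}}$. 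This last extension is automatic when $\mb{T}_{S}$ satisfies Serre's condition $(S_{2})$, in particular when $S$ is normal — the hypothesis under which \Cref{20170930-07} was proved — so that is the case to which the argument most cleanly reduces.
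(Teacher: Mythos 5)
Your Leray set-up is fine as far as it goes (injectivity via the unit section, $\pi_{\ast}\G_{m}\cong\G_{m,S}\oplus\underline{\Z}$ from \Cref{20151026-05}, flasqueness of $\underline{\Z}$ on the irreducible space $S$, and the stalk computation of $\mb{R}^{1}\pi_{\ast}\G_{m}$ in codimension $\le 1$), but the argument does not prove the lemma as stated, and you have in effect identified the reason yourself. The hypothesis is only $(R_{1})$: $S$ is not assumed normal, nor $S_{2}$, nor seminormal at points of codimension $\ge 2$. Your route needs the image of $\Pic(\mb{T}_{S})$ in $\H^{0}(S,\mb{R}^{1}\pi_{\ast}\G_{m})$ to vanish, and stalkwise vanishing is only available in codimension $\le 1$: at a codimension-$\ge 2$ point $s$ the ring $\mc{O}_{S,s}$ need not be seminormal, and then $\Pic(\mc{O}_{S,s}[t^{\pm}])/\Pic(\mc{O}_{S,s})$ can be nonzero (this is exactly the phenomenon recalled in \Cref{20170920-15}), so $\mb{R}^{1}\pi_{\ast}\G_{m}$ genuinely does not vanish and something else must be said there. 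Your fallback — a rational trivialization plus a Hartogs-type extension — requires $S_{2}$, i.e. normality, which you concede; so what you sketch is only the normal case, essentially the scope of the paper's second proof (which also assumes quasi-compactness and argues by induction on an affine cover using \Cref{20170930-07}). The paper's first proof covers the actual $(R_{1})$ hypotheses by verifying the pointwise criterion of \cite[$\text{IV}_{4}$, (21.4.9)]{EGA}, which is designed precisely so that only the codimension-$\le 1$ local rings intervene: there the inputs are Traverso's theorem ($\Pic(A)\to\Pic(\A_{A}^{1})$ is bijective for $A$ seminormal, applied to the regular one-dimensional local rings) together with $\Pic=\Cl$ for regular rings and surjectivity of restriction of class groups to pass from $\A_{A}^{1}$ to $\Spec A[t^{\pm}]$.

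There is also a wrong step inside your normal-case sketch: the claim that the Weil divisor of the chosen rational trivialization $\sigma$ ``vanishes, because $\mc{L}$ is trivial at every codimension-$1$ point of $\mb{T}_{S}$.'' Triviality of a line bundle on local rings is automatic for every line bundle and puts no constraint on $\operatorname{div}(\sigma)$ (take $\mc{O}_{\A^{1}}$ with the rational section $t$: the divisor is $[0]\ne 0$). The clean normal-case argument is rather: $\Pic(\mb{T}_{S})\hookrightarrow\Cl(\mb{T}_{S})$, and $\Cl(S)\to\Cl(\mb{T}_{S})$ is surjective (homotopy invariance of $\Cl$ plus the excision sequence removing $\{t=0\}$), so $\mc{L}\cong\mc{O}_{\mb{T}_{S}}(\pi^{\ast}D)\cong\pi^{\ast}\mc{O}_{S}(D)$ for some Weil divisor $D$ on $S$; pulling back along the unit section $e$ shows $\mc{O}_{S}(D)\cong e^{\ast}\mc{L}$ is invertible, hence $D$ is Cartier and $\mc{L}\in\pi^{\ast}\Pic(S)$, with no extension step needed. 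But even repaired this way, the argument remains confined to normal $S$ and so falls short of the statement.
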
 \begin{proof}[Proof 1] \footnote{Following comments by user ``Heer'' in \url{https://mathoverflow.net/q/84414}.} We check the conditions of \cite[$\text{IV}_{4}$, (21.4.9)]{EGA}. The projection $\pi$ is faithfully flat and has a section, hence $\pi^{\ast}$ is injective; the map $\pi$ is both quasi-compact and open. Given a codimension 1 point $s \in S$, set $A := \mc{O}_{S,s}$; since $A$ is seminormal, the pullback $\Pic(A) \to \Pic(\A_{A}^{1})$ is an isomorphism by Traverso's theorem \cite[Theorem 3.6]{TRAVERSO-SAPG}; since $A$ is regular, for any open subscheme $U \subseteq \A_{A}^{1}$ we have an isomorphism $\Pic(U) \simeq \Cl(U)$; the restriction map $\Cl(\A_{A}^{1}) \to \Cl(U)$ is surjective; we take $U := \Spec A[t^{\pm}]$. \end{proof} \begin{proof}[Proof 2, if $S$ is normal and quasi-compact] After taking connected components, we may assume that $S$ is a Noetherian normal integral scheme. Since the projection $\pi$ has a section, it is clear that $\pi^{\ast}$ is injective. For any quasi-compact scheme $Y$, let $n(Y)$ be the minimal size of an affine open covering of $Y$. \par We proceed by induction on $n(S)$. The case $n(S) = 1$ (in other words $S$ is affine) is \Cref{20170930-07}. \par In general, suppose $S = S_{1} \cup S_{2}$ where $S_{1},S_{2}$ are open subschemes of $S$ such that $n(S_{i}) < n(S)$. Let $\pi_{i} : \mb{T}_{S_{i}} \to S_{i}$ be the projections. Suppose $\mc{L}$ is an invertible sheaf on $\mb{T}_{S}$; by the induction hypothesis, there exist invertible $\mc{O}_{S_{i}}$-modules $\mc{M}_{i}$ and isomorphisms \[ \varphi_{i} : \mc{L}|_{\mb{T}_{S_{i}}} \to \pi_{i}^{\ast}\mc{M}_{i} \] of $\mc{O}_{\mb{T}_{S_{i}}}$-modules. Set $S_{12} := S_{1} \cap S_{2}$ and $\pi_{12} : \mb{T}_{S_{12}} \to S_{12}$ the projection; since $\Pic(S_{12}) \to \Pic(\mb{T}_{S_{12}})$ is injective, there is an isomorphism \[ \alpha : \mc{M}_{1}|_{S_{12}} \to \mc{M}_{2}|_{S_{12}} \] of $\mc{O}_{S_{12}}$-modules; moreover, since the inclusion \[ \Gamma(S_{12},\G_{m}) \times t^{\Z} \to \Gamma(\mb{T}_{S_{12}},\G_{m}) \] is an isomorphism (by \Cref{20151026-05}), we may multiply $\alpha$ by a unit in $\G_{m}(S_{12})$ and multiply $\mc{M}_{1}$ by a character $t^{n}$ so that $\pi_{12}^{\ast}\alpha$ corresponds to $(\varphi_{2}|_{\mb{T}_{S_{12}}}) \circ (\varphi_{1}|_{\mb{T}_{S_{12}}})^{-1}$. Thus the invertible $\mc{O}_{S}$-module obtained by gluing $\mc{M}_{1},\mc{M}_{2}$ along $\alpha$ gives the desired element of $\Pic(S)$ whose image in $\Pic(\mb{T}_{S})$ is $\mc{L}$. \end{proof}

\begin{lemma} \label{20170920-01} Let $X$ be a scheme. Let $\mathrm{M}$ be a finitely generated torsion-free abelian group, let $\mb{T} := \mathrm{D}(\mathrm{M})$ be the associated diagonalizable group scheme, and let $\pi : \mathrm{B}\mb{T}_{X} \to X$ be the classifying stack. Then there is an exact sequence \[ 0 \to \H_{\fppf}^{2}(X,\G_{m}) \stackrel{\pi^{\ast}}{\to} \H_{\fppf}^{2}(\mr{B}\mb{T}_{X},\G_{m}) \to \H_{\fppf}^{1}(X,\underline{\mr{M}}) \] of groups. \end{lemma}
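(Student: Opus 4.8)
The plan is to compute $\H^{2}_{\fppf}(\mr{B}\mb{T}_{X},\G_{m})$ via the Leray spectral sequence for the structural morphism $\pi\colon\mr{B}\mb{T}_{X}\to X$,
\[ \mr{E}_{2}^{p,q}=\H^{p}_{\fppf}(X,\mb{R}^{q}\pi_{\ast}\G_{m})\implies \H^{p+q}_{\fppf}(\mr{B}\mb{T}_{X},\G_{m}), \]
together with the section $\xi\colon X\to\mr{B}\mb{T}_{X}$ corresponding to the trivial $\mb{T}$-torsor. Everything comes down to identifying $\mb{R}^{q}\pi_{\ast}\G_{m}$ for $q\le 2$.

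First I would show $\pi_{\ast}\G_{m}=\G_{m}$ and $\mb{R}^{1}\pi_{\ast}\G_{m}=\underline{\mr{M}}$. For any $X$-scheme $U$ the tautological cover $U\to\mr{B}\mb{T}_{U}$ gives an injection $\Gamma(\mr{B}\mb{T}_{U},\G_{m})\hookrightarrow\Gamma(U,\G_{m})$ split by $\pi^{\ast}$, hence an isomorphism, so $\pi_{\ast}\G_{m}=\G_{m}$. For $\mb{R}^{1}$: a line bundle on $\mr{B}\mb{T}_{U}=[U/\mb{T}]$ (trivial action) is a $\mb{T}$-equivariant line bundle on $U$, i.e.\ a line bundle on $U$ together with a homomorphism $\mb{T}_{U}\to\G_{m,U}$ by which $\mb{T}$ acts on the fibers, so $\Pic(\mr{B}\mb{T}_{U})=\Pic(U)\oplus\underline{\Hom}_{U}(\mb{T}_{U},\G_{m,U})$, naturally in $U$. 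Sheafifying on the fppf site of $X$ kills the first summand, while the second is the constant sheaf $\underline{\mr{M}}$ since $\mb{T}=\mr{D}(\mr{M})$; hence $\mb{R}^{1}\pi_{\ast}\G_{m}=\underline{\mr{M}}$.

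Next, the key vanishing $\mb{R}^{2}\pi_{\ast}\G_{m}=0$. It suffices to check this on geometric stalks, i.e.\ to show $\H^{2}_{\fppf}(\mr{B}\mb{T}_{A},\G_{m})=0$ for every strictly henselian local ring $A$. Since $\mr{M}$ is torsion-free, $\mb{T}\simeq\G_{m}^{\oplus\rank\mr{M}}$ is linearly reductive and $\mr{B}\mb{T}_{A}\to\Spec A$ is a good moduli space morphism; using the exact sequence $1\to 1+\mc{N}\to\G_{m}\to\G_{m}\to 1$ on $\mr{B}\mb{T}_{A}$ and the vanishing of higher quasi-coherent cohomology on an affine good moduli space one reduces to $A$ reduced. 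For $A$ reduced the descent spectral sequence for $\Spec A\to\mr{B}\mb{T}_{A}$ (with $(p+1)$-fold fiber product $\mb{T}^{p}_{A}$) has bottom row $\Gamma(\mb{T}^{\bullet}_{A},\G_{m})=A^{\times}\oplus(\text{free summand})$, acyclic in positive degrees by the computation in \Cref{20170920-07}, and one is reduced to showing that the $\Pic(\mb{T}^{\bullet}_{A})$-row contributes nothing, using $\Pic(A)=0$.

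Finally I would assemble the result. With $\mb{R}^{0}\pi_{\ast}\G_{m}=\G_{m}$, $\mb{R}^{1}\pi_{\ast}\G_{m}=\underline{\mr{M}}$ and $\mb{R}^{2}\pi_{\ast}\G_{m}=0$, the only terms of total degree $2$ on the $\mr{E}_{2}$-page are $\mr{E}_{2}^{2,0}=\H^{2}_{\fppf}(X,\G_{m})$ and $\mr{E}_{2}^{1,1}=\H^{1}_{\fppf}(X,\underline{\mr{M}})$, and the only differentials touching them are $d_{2}\colon\H^{0}_{\fppf}(X,\underline{\mr{M}})\to\H^{2}_{\fppf}(X,\G_{m})$ and $d_{2}\colon\H^{1}_{\fppf}(X,\underline{\mr{M}})\to\H^{3}_{\fppf}(X,\G_{m})$. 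The edge map $\mr{E}_{2}^{2,0}\to\H^{2}_{\fppf}(\mr{B}\mb{T}_{X},\G_{m})$ is $\pi^{\ast}$, which is injective because of the section $\xi$; by exactness this forces the first $d_{2}$ to vanish, so $\mr{E}_{\infty}^{2,0}=\H^{2}_{\fppf}(X,\G_{m})$, and the Leray filtration yields the short exact sequence $0\to\H^{2}_{\fppf}(X,\G_{m})\xrightarrow{\pi^{\ast}}\H^{2}_{\fppf}(\mr{B}\mb{T}_{X},\G_{m})\to\mr{E}_{\infty}^{1,1}\to 0$ with $\mr{E}_{\infty}^{1,1}=\ker(d_{2})\subseteq\H^{1}_{\fppf}(X,\underline{\mr{M}})$, which gives the asserted three-term exact sequence. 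The step I expect to be the main obstacle is the vanishing $\mb{R}^{2}\pi_{\ast}\G_{m}=0$: over a base scheme not assumed normal (or even reduced), the groups $\H^{q}_{\fppf}(\mb{T}^{p}_{A},\G_{m})$ for $q=0,1$ are considerably more delicate than in the integral and normal cases handled in \Cref{20151026-05} and \Cref{20170930-06}, so this is where the real work lies.
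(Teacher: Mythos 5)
Your overall assembly coincides with the paper's actual argument: Leray spectral sequence for $\pi$, the identifications $\pi_{\ast}\G_{m}=\G_{m}$ and $\mb{R}^{1}\pi_{\ast}\G_{m}=\underline{\mr{M}}$ (your equivariant-line-bundle computation is a correct way to see the latter), and the edge/section argument at the end are all fine. The problem is the step you yourself flag as the main obstacle: the vanishing $\mb{R}^{2}\pi_{\ast}\G_{m}=0$ is the entire content of the lemma beyond the normal case, and the route you sketch for it would not go through. After reducing (modulo non-Noetherian issues with a possibly non-nilpotent nilradical, which already require a limit argument) to a reduced strictly henselian local ring $A$, you assert that one is ``reduced to showing that the $\Pic(\mb{T}^{\bullet}_{A})$-row contributes nothing, using $\Pic(A)=0$''. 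That is exactly where the difficulty lives: for $A$ reduced but not seminormal (e.g.\ the strict henselization of $k[x,y]/(y^{2}-x^{3})$ at the origin), one has $\Pic(A)=0$ but $\Pic(A[\mr{M}])\neq 0$, so the $q=1$ row of the descent spectral sequence is not the constant complex $\Pic(A)$, and the vanishing of $\mr{E}_{2}^{1,1}$ --- i.e.\ the absence of nontrivial translation-invariant (``primitive'') line bundles on $\mb{T}_{A}$ --- is precisely what must be proved and does not follow from $\Pic(A)=0$. The homotopy-invariance inputs used in the normal case (\Cref{20170930-06}, \Cref{20170930-07}) are unavailable here; this is exactly the obstacle recorded in \Cref{20170920-15}.

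The paper circumvents this computation entirely. For non-normal $X$ it deduces $\mb{R}^{2}\pi_{\ast}\G_{m}=0$ from \Cref{20190128-13} (via \Cref{20190128-12}): a $\G_{m}$-gerbe on $\mr{B}\mb{T}_{X}$ is trivial on every geometric fiber $\mr{B}\mb{T}_{\kappa(\overline{x})}$ by the normal case applied to a separably closed field; over a strictly henselian local base one then lifts a $1$-twisted line bundle from the closed fiber through the infinitesimal thickenings, the obstructions lying in $\H^{2}$ of quasi-coherent sheaves which vanish by cohomological affineness of the good moduli space morphism, and one algebraizes by formal GAGA for good moduli spaces and Artin approximation. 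This gives \'etale-local triviality of the gerbe, i.e.\ $\mb{R}^{2}\pi_{\ast}\G_{m}=0$, without ever computing units or Picard groups of $\mb{T}^{p}_{A}$ over a non-normal ring; note that in this argument one deforms directly from the residue field, so your intermediate reduction to reduced $A$ buys nothing. To repair your proof you must either import this deformation-theoretic argument, or supply a genuinely new computation of the translation-invariant line bundles on $\mb{T}_{A}$ for arbitrary (reduced, non-seminormal) $A$ --- which is the content of the companion work cited in \Cref{20170920-15}, not something that falls out of $\Pic(A)=0$ and the spectral sequence formalism.
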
 \begin{proof} Suppose first that $X$ is Noetherian normal. Let $\xi : X \to \mr{B}\mb{T}_{X}$ be the morphism corresponding to the trivial $\mb{T}$-torsor. The descent spectral sequence \Cref{20151211-04} associated to the covering $\xi$ is of the form \begin{align} \label{20170920-01-eqn-01} \mathrm{E}_{1}^{p,q} = \H_{\fppf}^{q}(\mb{T}_{X}^{p} , \G_{m}) \implies \H_{\fppf}^{p+q}(\mr{B}\mb{T}_{X},\G_{m}) \end{align} with differentials $\mathrm{d}_{1}^{p,q} : \mathrm{E}_{1}^{p,q} \to \mathrm{E}_{1}^{p+1,q}$. \begin{center}\begin{tikzpicture}[>=stealth] 
\matrix[matrix of math nodes,row sep=1em, column sep=1.5em, text height=1.5ex, text depth=0.25ex] { 
|[name=01]| \vdots & |[name=02]| \vdots & |[name=03]| \vdots & |[name=04]| \vdots & |[name=05]| \\
|[name=11]| \H_{\fppf}^{3}(\mb{T}_{X}^{0},\G_{m}) & |[name=12]| \H_{\fppf}^{3}(\mb{T}_{X}^{1},\G_{m}) & |[name=13]| \H_{\fppf}^{3}(\mb{T}_{X}^{2},\G_{m}) & |[name=14]| \H_{\fppf}^{3}(\mb{T}_{X}^{3},\G_{m}) & |[name=15]| \dotsb \\ 
|[name=21]| \H_{\fppf}^{2}(\mb{T}_{X}^{0},\G_{m}) & |[name=22]| \H_{\fppf}^{2}(\mb{T}_{X}^{1},\G_{m}) & |[name=23]| \H_{\fppf}^{2}(\mb{T}_{X}^{2},\G_{m}) & |[name=24]| \H_{\fppf}^{2}(\mb{T}_{X}^{3},\G_{m}) & |[name=25]| \dotsb \\ 
|[name=31]| \H_{\fppf}^{1}(\mb{T}_{X}^{0},\G_{m}) & |[name=32]| \H_{\fppf}^{1}(\mb{T}_{X}^{1},\G_{m}) & |[name=33]| \H_{\fppf}^{1}(\mb{T}_{X}^{2},\G_{m}) & |[name=34]| \H_{\fppf}^{1}(\mb{T}_{X}^{3},\G_{m}) & |[name=35]| \dotsb \\ 
|[name=41]| \H_{\fppf}^{0}(\mb{T}_{X}^{0},\G_{m}) & |[name=42]| \H_{\fppf}^{0}(\mb{T}_{X}^{1},\G_{m}) & |[name=43]| \H_{\fppf}^{0}(\mb{T}_{X}^{2},\G_{m}) & |[name=44]| \H_{\fppf}^{0}(\mb{T}_{X}^{3},\G_{m}) & |[name=45]| \dotsb \\ 
}; 
\draw[->,font=\scriptsize]
(11) edge (12) (12) edge (13) (13) edge (14) (14) edge (15)
(21) edge (22) (22) edge (23) (23) edge (24) (24) edge (25)
(31) edge (32) (32) edge (33) (33) edge (34) (34) edge (35)
(41) edge (42) (42) edge (43) (43) edge (44) (44) edge (45); \end{tikzpicture} \end{center} Note that each differential $\mathrm{d}_{1}^{0,q} : \mathrm{E}_{1}^{0,q} \to \mathrm{E}_{1}^{1,q}$ is the $0$ map since the two projection maps $\mb{T}_{X}^{1} \rightrightarrows X$ are equal (since $\mb{T}$ acts trivially on $X$). By \Cref{20151026-05}, the map \begin{align} \label{20170920-01-eqn-02} \Gamma(X,\G_{m}) \oplus \mathrm{M}^{\oplus p} \to \Gamma(\mb{T}_{X}^{p} , \G_{m}) \end{align} is an isomorphism. Note that there are $p+2$ projection maps $\mb{T}^{p+1} \to \mb{T}^{p}$. Since $X$ is Noetherian normal, by \Cref{20170930-06} we have that $\mathrm{d}_{1}^{p,1} : \mathrm{E}_{1}^{p,1} \to \mathrm{E}_{1}^{p+1,1}$ is $0$ if $p$ is even and an isomorphism if $p$ is odd; thus $\mathrm{E}_{2}^{p,1} = 0$ for $p \ge 1$. \par Via the identification \labelcref{20170920-01-eqn-02}, we obtain a complex $\mathrm{E}_{1}^{\bullet,0}$ which is exact in degrees $p \ge 1$ by \Cref{20170920-07}. The above considerations show that the desired map $\pi^{\ast}$ is an isomorphism. \par If $X$ is not normal, we have $\mb{R}^{2}\pi_{\ast}\G_{m} = 0$ by \Cref{20190128-13} and the above normal case. We have $\mb{R}^{1}\pi_{\ast}\G_{m} \simeq \mb{T}^{\vee} \simeq \underline{\mr{M}}$ by \cite[2.10]{SHIN-TCBGOATGMG}, so the Leray spectral sequence for $\pi$ gives the desired exact sequence, where $\pi^{\ast}$ is injective since $\xi^{\ast}\pi^{\ast} = \id$. \end{proof}

\begin{theorem} \label{20170920-03} Let $\mr{M}$ be a finitely generated abelian group, and let $D \to \Spec \Z$ be the Cartier dual of $\mr{M}$. Then the morphism $\mr{B}D \to \Spec \Z$ satisfies $(\mr{SBMI})$. \end{theorem}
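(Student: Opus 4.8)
The condition $(\mr{SBMI})$ for $\mr{B}D \to \Spec\Z$ unwinds to: for every scheme $S$, one has $\Br = \Br'$ for $S$ if and only if $\Br = \Br'$ for $\mr{B}D_{S}$. One implication is free, since the projection $\pi : \mr{B}D_{S} \to S$ carries the section $\xi$ corresponding to the trivial torsor: by \Cref{20160630-wr-02}, $\Br = \Br'$ for $\mr{B}D_{S}$ forces $\Br = \Br'$ for $S$. The plan is to prove the converse, and the key move is to split off a finite part of $D$ and reduce to the classifying stack of a split torus, where \Cref{20170920-01} is available.

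For the reduction, I would write $\mr{M} \simeq \mr{M}_{\mr{tors}} \oplus \Z^{r}$ with $\mr{M}_{\mr{tors}}$ finite, and apply Cartier duality to $0 \to \mr{M}_{\mr{tors}} \to \mr{M} \to \Z^{r} \to 0$ to obtain a short exact sequence of $\Z$-group schemes $1 \to \mb{T} \to D \to F \to 1$ in which $\mb{T} := \mr{D}(\Z^{r}) \simeq \G_{m,\Z}^{\times r}$ is a split torus and $F := \mr{D}(\mr{M}_{\mr{tors}})$ is finite locally free over $\Z$. Base changing to $S$ gives $1 \to \mb{T}_{S} \to D_{S} \to F_{S} \to 1$ with $F_{S} \to S$ finite flat, so by \Cref{20160630-wr-03} it suffices to show that $\Br = \Br'$ for $\mr{B}\mb{T}_{S}$ whenever $\Br = \Br'$ for $S$: then any $c \in \Br'(\mr{B}D_{S})$ restricts to a class of $\Br'(\mr{B}\mb{T}_{S}) = \Br(\mr{B}\mb{T}_{S})$, hence lies in $\Br(\mr{B}D_{S})$, so $\alpha_{\mr{B}D_{S}}$ is onto (and it is always injective).

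For the split-torus case, let $\pi : \mr{B}\mb{T}_{S} \to S$ be the projection and $\xi : S \to \mr{B}\mb{T}_{S}$ its tautological section. \Cref{20170920-01} provides an exact sequence $0 \to \H_{\fppf}^{2}(S,\G_{m}) \xrightarrow{\pi^{\ast}} \H_{\fppf}^{2}(\mr{B}\mb{T}_{S},\G_{m}) \to \H_{\fppf}^{1}(S,\underline{\Z^{r}})$, and since $\xi^{\ast}$ splits $\pi^{\ast}$ this yields an internal direct sum decomposition $\H_{\fppf}^{2}(\mr{B}\mb{T}_{S},\G_{m}) = \pi^{\ast}\H_{\fppf}^{2}(S,\G_{m}) \oplus \ker(\xi^{\ast})$ with $\ker(\xi^{\ast}) \hookrightarrow \H_{\fppf}^{1}(S,\underline{\Z^{r}}) \simeq \H_{\fppf}^{1}(S,\underline{\Z})^{\oplus r}$. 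The one point that still needs a (short) argument is that $\H_{\fppf}^{1}(S,\underline{\Z})$ is torsion-free: for each $n \ge 1$ the sequence $0 \to \underline{\Z} \xrightarrow{n} \underline{\Z} \to \underline{\Z/n} \to 0$ shows the $n$-torsion of $\H_{\fppf}^{1}(S,\underline{\Z})$ is the image of the connecting map from $\H_{\fppf}^{0}(S,\underline{\Z/n})$, which vanishes because $\H_{\fppf}^{0}(S,\underline{\Z}) \to \H_{\fppf}^{0}(S,\underline{\Z/n})$ is surjective (a locally constant $\Z/n$-valued function has only finitely many, necessarily clopen, fibers, so it lifts to a locally constant $\Z$-valued function). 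Hence $\ker(\xi^{\ast})$ is torsion-free, so $\Br'(\mr{B}\mb{T}_{S}) = \H_{\fppf}^{2}(\mr{B}\mb{T}_{S},\G_{m})_{\mr{tors}} = \pi^{\ast}\Br'(S)$; and if $\Br = \Br'$ for $S$, then by functoriality of the Brauer map $\Br'(\mr{B}\mb{T}_{S}) = \pi^{\ast}\alpha_{S}(\Br S) = \alpha_{\mr{B}\mb{T}_{S}}(\pi^{\ast}\Br S) \subseteq \im\alpha_{\mr{B}\mb{T}_{S}}$, so $\alpha_{\mr{B}\mb{T}_{S}}$ is an isomorphism.

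I do not expect a real obstacle: the substance has been carried out in \Cref{20170920-01} (which computes $\H_{\fppf}^{2}(\mr{B}\mb{T}_{S},\G_{m})$ via the descent spectral sequence in the normal case and via the vanishing of $\mb{R}^{2}\pi_{\ast}\G_{m}$, from Mathur's \Cref{20190128-13}, in general) together with the formal \Cref{20160630-wr-02} and \Cref{20160630-wr-03}. The genuinely new ingredients are the Cartier-duality bookkeeping of the reduction to a split torus and the elementary torsion-freeness of $\H_{\fppf}^{1}(-,\underline{\Z})$ — the latter being exactly what upgrades ``$\pi^{\ast}$ is injective with cokernel inside $\H_{\fppf}^{1}(S,\underline{\Z^{r}})$'' to ``$\pi^{\ast}$ is an isomorphism on torsion classes'', which is the crux.
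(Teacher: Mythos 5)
Your proposal is correct and follows essentially the same route as the paper: split $\mr{M}$ as torsion plus $\Z^{r}$, dualize to get $1 \to \mb{T} \to D \to \mr{D}(\mr{M}_{\mr{tors}}) \to 1$, handle $\mr{B}\mb{T}_{S}$ via \Cref{20170920-01} together with torsion-freeness of $\H_{\fppf}^{1}(S,\underline{\Z})$, and then conclude with \Cref{20160630-wr-02} and \Cref{20160630-wr-03}. The only (harmless) deviation is that you prove torsion-freeness of $\H_{\fppf}^{1}(S,\underline{\Z})$ by the elementary Bockstein/idempotent-lifting argument, where the paper simply cites an external reference for this fact.
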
 \begin{proof} Suppose $\Br = \Br'$ for $X$, and let $c \in \Br'(\mr{B}D_{X})$ be a Brauer class. There is an exact sequence $0 \to \mr{N} \to \mr{M} \to \Z^{\oplus r} \to 0$ for some finite abelian group $\mr{N}$; taking the dual gives an exact sequence $1 \to \mb{T} \to D \to \mr{D}(\mr{N}) \to 1$ of group schemes. By \cite[4.4]{SHIN-TCBGOATGMG}, the group $\H_{\fppf}^{1}(X,\underline{\mr{M}})$ is torsion-free, so By \Cref{20170920-01}, the pullback map $\Br' X \to \Br' \mr{B}\mb{T}_{X}$ is an isomorphism, so $c|_{\mr{B}\mb{T}_{X}}$ is contained in $\Br(\mr{B}\mb{T}_{X})$ by \Cref{20160630-wr-02}; hence $c$ is contained in $\Br(\mr{B}D_{X})$ by \Cref{20160630-wr-03}. \end{proof}

\begin{remark} \label{20170920-15} One difficulty in working with torsion-free abelian groups $\mr{M}$ of higher rank is that $\Pic(A[\mr{M}])$ can be large if $A$ is not seminormal (see Weibel's description in \cite{WEIBEL-PIACF}). In \cite{SHIN-TCBGOATGMG} we prove $\mb{R}^{2}\pi_{\ast}\G_{m} = 0$ (in case $\mr{M} = \Z$) in a different way, by computing the translation-invariant subgroup of $\G_{m}$ instead of using \Cref{20190128-13}. \end{remark}

\section{The classifying stack of $\mathrm{GL}_{n}$} \label{sec-gln}

In this section we compute the cohomological Brauer group of $\GL_{n}$ over normal schemes.

\begin{setup} \label{20171109-21} Let $A$ be a ring, let \[ \mr{X}_{\bullet} := \{\mr{X}_{i,j}\}_{i,j=1,\dotsc,n} \] be a collection of $n^{2}$ variables, let $A[\mr{X}_{\bullet}]$ be the polynomial ring, let \[ \det \in A[\mr{X}_{\bullet}] \] be the determinant of the $n \times n$ matrix whose $(i,j)$th entry is $\mr{X}_{i,j}$. The localization $A[\mr{X}_{\bullet},\frac{1}{\det}]$ may be identified with the coordinate ring of $\GL_{n,A}$. \end{setup}

\begin{lemma} \label{20171109-05} The map \[ A \to A[\mr{X}_{\bullet}]/(\det) \] is faithfully flat. \end{lemma}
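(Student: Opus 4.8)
The plan is to show faithful flatness by checking it locally and then exhibiting $A[\mr{X}_\bullet]/(\det)$ as a polynomial ring over a localization of $A[\mr{X}_\bullet]$, or more directly, by using that $\det$ is a nonzerodivisor whose quotient is flat. First I would observe that flatness and surjectivity on $\Spec$ can both be checked after the faithfully flat base change $A \to A[\mr{X}_\bullet]$, so it suffices to treat the single element $\det$ of the polynomial ring $R := A[\mr{X}_\bullet]$ and show that $R/(\det)$ is faithfully flat over $A$. The key structural point is that $\det$ is \emph{monic} (up to sign and reindexing) in at least one of the variables, say $\mr{X}_{1,1}$: writing the determinant by cofactor expansion along the first row, $\det = \mr{X}_{1,1}\cdot M_{1,1} + (\text{terms not involving } \mr{X}_{1,1})$, where $M_{1,1}$ is the $(1,1)$ minor. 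This is not quite monic because $M_{1,1}$ is itself a polynomial, not a unit, so the naive argument needs adjustment.

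The cleaner approach: I would perform an $A$-algebra automorphism of $R$ to make $\det$ genuinely monic in one variable. After the change of variables $\mr{X}_{i,i} \mapsto \mr{X}_{i,i} + 1$ for $i \ge 2$ (or some similar shear), the determinant becomes $\mr{X}_{1,1} + (\text{lower-order stuff in } \mr{X}_{1,1})$ plus a constant adjustment — actually the slickest version is: the element $\det \in A[\mr{X}_\bullet]$ becomes, after a suitable $A$-linear automorphism of the variables, a polynomial of the form $\mr{X}_{1,1} \cdot u + v$ where $u \in A[\mr{X}_\bullet \setminus \{\mr{X}_{1,1}\}]^\times$... which again fails since $u = M_{1,1}$ is not a unit. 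So instead I would argue directly: the quotient $R/(\det)$ is, as an $A$-module, free. Indeed, $R = A[\mr{X}_\bullet]$ is a free $A$-module with basis the monomials, and $\det$ viewed as a polynomial in $\mr{X}_{1,1}$ with coefficients in $B := A[\mr{X}_\bullet \setminus \{\mr{X}_{1,1}\}]$ has the form $M_{1,1}\,\mr{X}_{1,1} + c_0$ with $M_{1,1}, c_0 \in B$ and $M_{1,1}$ itself monic in $\mr{X}_{2,2}$ (of degree $1$) by the same cofactor reasoning, inductively reducing to a $1\times 1$ determinant which literally is a variable. This inductive structure shows $\det$ is a nonzerodivisor in $R$ over any base (it is, say, primitive as it has a coefficient equal to $\pm 1$ — the $(1,1)$-entry of the identity-matrix monomial $\mr{X}_{1,1}\mr{X}_{2,2}\cdots\mr{X}_{n,n}$ has coefficient $1$).

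Thus the cleanest path I would take: (i) $\det \in A[\mr{X}_\bullet]$ has a monomial occurring with unit coefficient $1$ (namely $\mr{X}_{1,1}\mr{X}_{2,2}\cdots\mr{X}_{n,n}$), hence is a nonzerodivisor modulo every prime of $A$, so by the local criterion for flatness (fiberwise nonzerodivisor $\Rightarrow$ quotient flat, for finitely presented situations) the quotient $A[\mr{X}_\bullet]/(\det)$ is $A$-flat; (ii) surjectivity of $\Spec A[\mr{X}_\bullet]/(\det) \to \Spec A$ holds because over each residue field $\kappa(\mf{p})$ the polynomial $\det$ is nonzero and nonconstant, so $V(\det) \subseteq \A^{n^2}_{\kappa(\mf{p})}$ is nonempty. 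The only real subtlety — and the step I expect to need the most care — is the flatness deduction in (i): one cannot just say ``nonzerodivisor implies flat'' in general, so I would invoke that $A[\mr{X}_\bullet] \to A[\mr{X}_\bullet]$, multiplication by $\det$, is injective with $A$-flat cokernel because $\det$ is a non-zero-divisor \emph{on each fiber} and everything is finitely presented over $A$ — this is the standard criterion (e.g. \cite[\S22]{EGA} $\text{IV}_4$, or via the local criterion: $\Tor_1^A(\kappa(\mf{p}), A[\mr{X}_\bullet]/\det)$ computes from the two-term complex $A[\mr{X}_\bullet] \xrightarrow{\det} A[\mr{X}_\bullet]$ and vanishes precisely because $\det$ is a nonzerodivisor on $\kappa(\mf{p})[\mr{X}_\bullet]$). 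With flatness and surjectivity in hand, faithful flatness follows.
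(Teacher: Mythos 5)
Your argument is correct, but it takes a genuinely different route from both of the paper's proofs. Proof~1 in the paper reduces to $A=\Z$ by base change, gets flatness from torsion-freeness of $\Z[\mr{X}_{\bullet}]/(\det)$ (using that this ring is a domain, by Bruns--Vetter), and gets surjectivity from the retraction sending all variables to $0$. Proof~2 is exactly the ``make $\det$ monic'' idea you circled and then declared a failure: it does not fail --- the shear $\mr{X}_{i,i}\mapsto \mr{X}_{i,i}+\mr{X}_{1,1}$ for $i\ge 2$ (adding the \emph{variable} $\mr{X}_{1,1}$, not the constant $1$) makes $\det$ monic of degree $n$ in $\mr{X}_{1,1}$, so the quotient is finite free over the polynomial ring in the remaining variables and faithful flatness is immediate. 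Your route instead observes that $\det$ is nonzero, hence a nonzerodivisor, in $\kappa(\mf{p})[\mr{X}_{\bullet}]$ for every prime $\mf{p}\subseteq A$ (it has the coefficient $1$ on $\mr{X}_{1,1}\cdots\mr{X}_{n,n}$), and invokes the fiberwise flatness criterion for a finitely presented hypersurface. That criterion is a real theorem (flatness of relative global complete intersections, or EGA~IV's crit\`ere par fibres after a Noetherian reduction), so citing it is legitimate; what you buy is that no coordinate change and no determinantal-ring input is needed, at the price of a heavier black box than either of the paper's arguments.

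One soft spot: your parenthetical justification of that criterion --- computing $\mathrm{Tor}_{1}^{A}(\kappa(\mf{p}),A[\mr{X}_{\bullet}]/(\det))$ from the two-term complex $A[\mr{X}_{\bullet}]\xrightarrow{\det}A[\mr{X}_{\bullet}]$ --- is not by itself a proof, since vanishing of $\mathrm{Tor}_{1}$ against residue fields implies flatness only with Noetherian or finite-presentation input (that input is precisely what the local/fiberwise criterion encodes), so you must cite the criterion rather than re-derive it this way. If you prefer to stay elementary, your own observation closes the gap: since the diagonal monomial of $\det$ has coefficient $1$ and is the leading monomial for a suitable lexicographic order, $\det$ remains a nonzerodivisor in $B[\mr{X}_{\bullet}]$ for \emph{every} ring $B$, in particular $B=A/I$; hence $\mathrm{Tor}_{1}^{A}(A/I,A[\mr{X}_{\bullet}]/(\det))=\ker\bigl(\det\colon (A/I)[\mr{X}_{\bullet}]\to (A/I)[\mr{X}_{\bullet}]\bigr)=0$ for all finitely generated ideals $I$, and flatness follows from the ideal-theoretic criterion with no fiberwise theorem at all (alternatively, prove flatness over $\Z$ this way and base change, which is Proof~1 in spirit). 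Your surjectivity argument is fine (or use the zero-matrix section, as in the paper); the opening sentence about checking things after the base change $A\to A[\mr{X}_{\bullet}]$ is a no-op and should be dropped.
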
 \begin{proof}[Proof 1] We may assume $A = \Z$ since faithfully flat morphisms are preserved by base change. For flatness, it suffices to show that $\Z[\mr{X}_{\bullet}]/(\det)$ is torsion-free. Suppose $\ell \in \Z$ and $a \in \Z[\mr{X}_{\bullet}]$ such that $\ell a \in (\det)$; since $\Z[\mr{X}_{\bullet}]/(\det)$ is an integral domain \cite[(2.10) Theorem]{BRUNSVETTER-DETERMINANTALRINGS}, either $\ell \in (\det)$ or $a \in (\det)$, but it is not possible that $\ell \in \det$ since $\ell$ has degree $0$ whereas $\det$ has degree $n$. Since $\Z \to \Z[\mr{X}_{\bullet}]/(\det)$ has a retraction, it is faithfully flat. \end{proof} \begin{proof}[Proof 2] We can make a change of coordinates $\mr{X}_{i,i} \mapsto \mr{X}_{i,i}+\mr{X}_{1,1}$ for $i \ge 2$. Let $f$ be the polynomial that $\det$ gets sent to; then $f$ is monic of degree $n$ in the variable $\mr{X}_{1,1}$, hence $A[\mr{X}_{\bullet}]/(\det)$ is finite locally free over $A[\mr{X}_{\bullet} \setminus \{\mr{X}_{1,1}\}]$, which is smooth over $A$. \end{proof}

\begin{lemma} \label{20171109-06} The element $\det$ is a nonzerodivisor of $A[\mr{X}_{\bullet}]$. \end{lemma}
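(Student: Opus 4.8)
The plan is to reduce to the elementary fact that a monic polynomial in one variable over an arbitrary commutative ring is a nonzerodivisor, using the change of coordinates already introduced in Proof 2 of \Cref{20171109-05}.

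First I would observe that the assignment $\mr{X}_{i,i} \mapsto \mr{X}_{i,i} + \mr{X}_{1,1}$ for $i \ge 2$, together with the identity on all other variables, defines an $A$-algebra automorphism $\varphi$ of $A[\mr{X}_{\bullet}]$ (with inverse $\mr{X}_{i,i} \mapsto \mr{X}_{i,i} - \mr{X}_{1,1}$ for $i \ge 2$). In particular $\det$ is a nonzerodivisor in $A[\mr{X}_{\bullet}]$ if and only if $f := \varphi(\det)$ is. As recorded in Proof 2 of \Cref{20171109-05}, the polynomial $f$ is monic of degree $n$ in the single variable $\mr{X}_{1,1}$, with coefficients in the subring $R := A[\mr{X}_{\bullet} \setminus \{\mr{X}_{1,1}\}]$.

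It then remains to check that a monic polynomial $f \in R[t]$ is a nonzerodivisor for any commutative ring $R$: given nonzero $g \in R[t]$, write $g = b_{d} t^{d} + (\text{lower-degree terms})$ with $b_{d} \ne 0$; since $f$ is monic of degree $n$, the coefficient of $t^{d+n}$ in the product $gf$ equals $b_{d} \ne 0$, so $gf \ne 0$. Applying this with $R = A[\mr{X}_{\bullet} \setminus \{\mr{X}_{1,1}\}]$ and $t = \mr{X}_{1,1}$ shows that $f$, hence $\det$, is a nonzerodivisor.

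There is no serious obstacle here: the only points requiring (minor) care are that $\varphi$ is genuinely an $A$-algebra automorphism and that $f$ is monic in $\mr{X}_{1,1}$, both of which were already established in the proof of \Cref{20171109-05}. (Alternatively, one could argue directly via McCoy's theorem: the coefficient of the monomial $\mr{X}_{1,1}\mr{X}_{2,2}\dotsm\mr{X}_{n,n}$ in $\det$ is the sign of the identity permutation, namely $1$, so no nonzero element of $A$ annihilates $\det$, whence $\det$ is a nonzerodivisor.)
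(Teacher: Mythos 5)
Your proof is correct, but it follows a genuinely different route from the paper's. The paper first works over $\Z$: it cites the fact that $\Z[\mr{X}_{\bullet}]/(\det)$ is an integral domain \cite[(2.10) Theorem]{BRUNSVETTER-DETERMINANTALRINGS} to conclude that $\det$ is irreducible, hence a nonzerodivisor, in $\Z[\mr{X}_{\bullet}]$, and then transfers the statement to an arbitrary ring $A$ by tensoring the short exact sequence $0 \to \Z[\mr{X}_{\bullet}] \to \Z[\mr{X}_{\bullet}] \to \Z[\mr{X}_{\bullet}]/(\det) \to 0$ with $A$ and using the $\Z$-flatness of $\Z[\mr{X}_{\bullet}]/(\det)$ from \Cref{20171109-05} to keep the first map injective. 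You instead argue directly over $A$: the coordinate change $\mr{X}_{i,i} \mapsto \mr{X}_{i,i}+\mr{X}_{1,1}$ ($i \ge 2$) is an $A$-algebra automorphism sending $\det$ to a polynomial that is monic of degree $n$ in $\mr{X}_{1,1}$ (as already noted in Proof 2 of \Cref{20171109-05}), and a monic polynomial over any commutative ring is a nonzerodivisor by the leading-coefficient argument; your McCoy-type alternative (the coefficient of $\mr{X}_{1,1}\mr{X}_{2,2}\dotsm\mr{X}_{n,n}$ in $\det$ is $1$, so no nonzero $a \in A$ kills $\det$) is also valid. Your approach is more elementary and self-contained: it avoids the determinantal-ring input from Bruns--Vetter and the flatness/base-change step, at the cost of nothing beyond the coordinate change the paper already uses elsewhere; the paper's approach, on the other hand, reuses machinery it needs anyway for \Cref{20171109-05} and keeps the domain property of $\Z[\mr{X}_{\bullet}]/(\det)$ in play, which is also exploited in \Cref{20171109-08}.
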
 \begin{proof} Since $\Z[\mr{X}_{\bullet}]/(\det)$ is an integral domain, we have that $\det$ is irreducible element of $\Z[\mr{X}_{\bullet}]$; hence it is a nonzerodivisor on $\Z[\mr{X}_{\bullet}]$; hence the sequence \begin{align} \label{20171109-06-eqn-01} 0 \to \Z[\mr{X}_{\bullet}] \to \Z[\mr{X}_{\bullet}] \to \Z[\mr{X}_{\bullet}]/(\det) \to 0 \end{align} is exact; here $\Z[\mr{X}_{\bullet}]/(\det)$ is flat over $\Z$ by \Cref{20171109-05}; tensoring \labelcref{20171109-06-eqn-01} with $-\otimes_{\Z} A$ gives \[ 0 \to A[\mr{X}_{\bullet}] \stackrel{\ast}{\to} A[\mr{X}_{\bullet}] \to A[\mr{X}_{\bullet}]/(\det) \to 0 \] where the map $\ast$ is injective by e.g. \cite[00HL]{SP}. \end{proof}

\begin{lemma} \label{20171109-07} The map \begin{align} \Phi_{A} : \label{20171109-21-eqn-01} \textstyle A^{\times} \oplus \Gamma(\Spec A , \underline{\Z}) \to (A[\mr{X}_{\bullet},\frac{1}{\det}])^{\times} \end{align} sending $(a,n) \mapsto a \det^{n}$ is injective. \end{lemma}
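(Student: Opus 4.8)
The plan is to use the grading of the polynomial ring $A[\mr{X}_{\bullet}]$ in which each variable $\mr{X}_{i,j}$ has degree $1$. With respect to this grading $A$ lies in degree $0$ and $\det$ is homogeneous of positive degree (equal to the size of the matrix); moreover, by \Cref{20171109-06}, $\det$ is a nonzerodivisor on $A[\mr{X}_{\bullet}]$, so $\det^{k} \neq 0$ for every $k \geq 0$ and the localization map $A[\mr{X}_{\bullet}] \to A[\mr{X}_{\bullet},\frac{1}{\det}]$ is injective.

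First I would reduce to the case where the exponent $n \in \Gamma(\Spec A,\underline{\Z})$ is a constant integer. Since $\Spec A$ is quasi-compact, $n$ takes only finitely many values, giving a decomposition into open-and-closed subschemes $\Spec A = \coprod_{j=1}^{r} \Spec A_{j}$ on each of which $n$ is constant; correspondingly $A \cong \prod_{j} A_{j}$ and $A[\mr{X}_{\bullet},\frac{1}{\det}] \cong \prod_{j} A_{j}[\mr{X}_{\bullet},\frac{1}{\det}]$, and under these identifications $\Phi_{A}$ is the product of the maps $\Phi_{A_{j}}$. Hence it suffices to show $\Phi_{A}$ is injective when the exponent is a fixed integer $n \in \Z$.

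Now suppose $a\det^{n} = 1$ in $A[\mr{X}_{\bullet},\frac{1}{\det}]$ with $a \in A^{\times}$ and $n \in \Z$; we may assume $A \neq 0$. Rearranging and using injectivity of the localization map, this becomes an equality in $A[\mr{X}_{\bullet}]$: namely $a\det^{n} = 1$ if $n \geq 0$, and $a = \det^{-n}$ if $n < 0$. In either case, one side is a nonzero element of degree $0$ (namely $1$, resp. the unit $a$), while the other side is homogeneous of degree $(\deg\det)\cdot|n|$ and is nonzero (since $a$ is a unit and no power of $\det$ vanishes). Equal nonzero homogeneous elements of a graded ring have the same degree, so $(\deg\det)\cdot|n| = 0$; as $\deg\det > 0$, this forces $n = 0$, and then the relation reads $a = 1$. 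Hence $\ker\Phi_{A} = 0$. The argument is elementary; the only points needing a little care are the reduction from a locally constant exponent to a constant one and keeping track of which equalities hold in $A[\mr{X}_{\bullet}]$ rather than in its localization, both dealt with above, so there is no real obstacle.
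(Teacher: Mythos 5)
Your proof is correct and follows the paper's route: the paper's own argument is just the one-line observation that injectivity follows from \Cref{20171109-06} (that $\det$ is a nonzerodivisor), and your write-up is essentially that argument spelled out, using injectivity of $A[\mr{X}_{\bullet}] \to A[\mr{X}_{\bullet},\tfrac{1}{\det}]$ plus a degree comparison, together with the (correct) reduction from a locally constant exponent to a constant one via a clopen decomposition of $\Spec A$.
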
 \begin{proof}[Proof] This follows from \Cref{20171109-06}. \end{proof}

\begin{lemma} \label{20171109-08} If $A$ is an integral domain, the map \labelcref{20171109-21-eqn-01} is an isomorphism. \end{lemma}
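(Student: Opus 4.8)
The plan is to reduce surjectivity of $\Phi_{A}$ to the irreducibility of the generic determinant over the fraction field, together with a comparison of leading coefficients. Injectivity is already \Cref{20171109-07}, and since $A$ is a domain $\Spec A$ is connected, so $\Gamma(\Spec A,\underline{\Z})=\Z$ and the source of $\Phi_{A}$ is $A^{\times}\oplus\Z$; thus it remains to show every unit of $R:=A[\mr{X}_{\bullet},\tfrac{1}{\det}]$ has the form $a\det^{n}$ with $a\in A^{\times}$ and $n\in\Z$. First I would make $\det$ monic: as in Proof 2 of \Cref{20171109-05}, the substitution $\mr{X}_{i,i}\mapsto\mr{X}_{i,i}+\mr{X}_{1,1}$ for $i\ge2$ is an $A$-algebra automorphism $\sigma$ of $A[\mr{X}_{\bullet}]$ fixing $A$ and carrying $\det$ to a polynomial $f$ that is monic of degree $n$ in the variable $\mr{X}_{1,1}$ with coefficients in $B:=A[\mr{X}_{\bullet}\setminus\{\mr{X}_{1,1}\}]$. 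Since $\sigma$ extends to an isomorphism $R\xrightarrow{\sim}B[\mr{X}_{1,1},\tfrac{1}{f}]$ over $A$ carrying $\det$ to $f$, it suffices to show every unit of $B[\mr{X}_{1,1},\tfrac{1}{f}]$ has the form $af^{n}$ with $a\in A^{\times}$, $n\in\Z$.

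Given such a unit $v$, I would write $v=g/f^{r}$ and $v^{-1}=h/f^{s}$ with $g,h\in B[\mr{X}_{1,1}]$ and $r,s\ge0$; since $f$ is a nonzerodivisor (\Cref{20171109-06}), clearing denominators in $vv^{-1}=1$ gives $gh=f^{r+s}$ in $B[\mr{X}_{1,1}]$. Because $B$ is a domain and $f$ (hence $f^{r+s}$) is monic in $\mr{X}_{1,1}$, comparing leading coefficients as polynomials in $\mr{X}_{1,1}$ gives $\mathrm{lc}(g)\,\mathrm{lc}(h)=1$, where $\mathrm{lc}(\cdot)$ denotes the leading coefficient with respect to $\mr{X}_{1,1}$; in particular $\mathrm{lc}(g)\in B^{\times}=A^{\times}$ (a polynomial ring over a domain has the same units as the domain). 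The decisive step is to show that $f$ is irreducible in $L[\mr{X}_{1,1}]$, where $L:=\operatorname{Frac}(B)$: because $f$ is monic in $\mr{X}_{1,1}$, the ring $L[\mr{X}_{1,1}]/(f)$ is obtained from $K[\mr{X}_{\bullet}]/(f)$ (with $K:=\operatorname{Frac}(A)$) by inverting the nonzero elements of $K[\mr{X}_{\bullet}\setminus\{\mr{X}_{1,1}\}]$, and $K[\mr{X}_{\bullet}]/(f)\cong K[\mr{X}_{\bullet}]/(\det)$ is an integral domain; hence $L[\mr{X}_{1,1}]/(f)$ is a domain, so $(f)$ is prime and $f$ is irreducible in the principal ideal domain $L[\mr{X}_{1,1}]$.

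With $f$ irreducible in $L[\mr{X}_{1,1}]$, unique factorization applied to $gh=f^{r+s}$ yields $g=uf^{k}$ for some $u\in L^{\times}$ and some $0\le k\le r+s$; comparing top-degree coefficients in $\mr{X}_{1,1}$ forces $u=\mathrm{lc}(g)$, which lies in $A^{\times}$ by the previous step. Hence $v=g/f^{r}=\mathrm{lc}(g)\,f^{\,k-r}$, so every unit of $B[\mr{X}_{1,1},\tfrac{1}{f}]$ has the required form, and applying $\sigma^{-1}$ shows every unit of $R$ equals $a\det^{n}$ for some $a\in A^{\times}$ and $n\in\Z$. Together with injectivity, $\Phi_{A}$ is an isomorphism.

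The main obstacle is the input that $K[\mr{X}_{\bullet}]/(\det)$ is an integral domain for an arbitrary field $K$ — equivalently, that the generic determinant is absolutely irreducible. The instance quoted earlier in the paper (\cite[(2.10) Theorem]{BRUNSVETTER-DETERMINANTALRINGS}) is over $\Z$, and since domain-ness of a $\Z$-algebra is not inherited under reduction modulo a prime, the statement over $K$ needs separate justification: one can invoke \cite[(2.10) Theorem]{BRUNSVETTER-DETERMINANTALRINGS} with coefficient ring $K$, or use the classical fact that the hypersurface of singular $n\times n$ matrices is geometrically irreducible, or argue by induction on $n$ via the cofactor expansion of $\det$ along a row. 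Everything else — the reduction to a monic polynomial, the leading-coefficient bookkeeping, and the identity $B^{\times}=A^{\times}$ — is routine.
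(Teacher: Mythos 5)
Your proof is correct, and it takes a somewhat different route from the paper's, though both ultimately rest on the same key input, namely that the generic determinantal hypersurface is integral (\cite[(2.10) Theorem]{BRUNSVETTER-DETERMINANTALRINGS}). The paper argues directly in $A[\mr{X}_{\bullet}]$: after clearing denominators to get $a_{1}a_{2} = \det^{f_{1}+f_{2}}$ (using \Cref{20171109-06}), it observes that since $A$ is a domain one may take $a_{1},a_{2}$ homogeneous, and then concludes using that $\det$ is a \emph{prime} element of $A[\mr{X}_{\bullet}]$ --- note that the Bruns--Vetter theorem is stated over an arbitrary domain of coefficients, so the paper invokes it over $A$ itself, which answers the concern you raise at the end: no separate justification over the fraction field is needed, and conversely the field case you use is just a special case of the same citation. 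Your argument instead recycles the monicity trick from Proof~2 of \Cref{20171109-05} to reduce to $B[\mr{X}_{1,1},\tfrac{1}{f}]$ with $f$ monic, and then works in the PID $L[\mr{X}_{1,1}]$ over the fraction field $L$ of $B$, using irreducibility of $f$ there plus leading-coefficient bookkeeping. What your route buys is that it only needs integrality of $K[\mr{X}_{\bullet}]/(\det)$ over the fraction field $K$ (a marginally weaker input, available from the classical geometric irreducibility of the determinant locus), at the cost of the change-of-variables and fraction-field detour; the paper's version is shorter because primality of $\det$ in $A[\mr{X}_{\bullet}]$ immediately forces each factor of $\det^{f_{1}+f_{2}}$ to be a unit of $A[\mr{X}_{\bullet}]$, i.e.\ an element of $A^{\times}$, times a power of $\det$. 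All the individual steps you give (the automorphism $\sigma$, the identity $gh=f^{r+s}$, $\mathrm{lc}(g)\in B^{\times}=A^{\times}$, the identification of $L[\mr{X}_{1,1}]/(f)$ with a nonzero localization of $K[\mr{X}_{\bullet}]/(\det)$, and descending $g=\mathrm{lc}(g)f^{k}$ back to $B[\mr{X}_{1,1}]$) check out.
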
 \begin{proof}[Proof] Suppose $\frac{a_{1}}{\det^{f_{1}}}$ is a unit of $A[\mr{X}_{\bullet},\frac{1}{\det}]$, with inverse $\frac{a_{2}}{\det^{f_{2}}}$. Then $a_{1}a_{2} = \det^{f_{1}+f_{2}}$ since $\det$ is a nonzerodivisor \Cref{20171109-06}. Since $A$ is an integral domain, we may assume that $a_{1},a_{2}$ are homogeneous. We have that $\det$ is a prime element of $A[\mr{X}_{\bullet}]$ by \cite[(2.10) Theorem]{BRUNSVETTER-DETERMINANTALRINGS}. \end{proof}

\begin{lemma}[{Units of $\GL_{n}$}] \label{20171109-02} \footnote{Broughton \cite{BROUGHTON-ANOCOAG1983} shows that the units of the coordinate ring of an algebraic group over any algebraically closed field are given by characters.} The map $\Phi_{A}$ \labelcref{20171109-21-eqn-01} is an isomorphism if and only if $A$ is reduced. \end{lemma}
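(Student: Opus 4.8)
By \Cref{20171109-07} the map $\Phi_{A}$ is always injective, so it remains to show $\Phi_{A}$ is surjective if and only if $A$ is reduced. I would prove the ``only if'' direction by contraposition. If $A$ is not reduced, pick $x\in A$ a nonzero nilpotent with $x^{k}=0$ for $k\ge 2$ minimal, and set $\epsilon:=x^{k-1}$, so $\epsilon\neq 0$ and $\epsilon^{2}=0$. Then $v:=1+\epsilon\mr{X}_{1,1}$ is a unit of $A[\mr{X}_{\bullet},\tfrac{1}{\det}]$ with inverse $1-\epsilon\mr{X}_{1,1}$. If $v=\Phi_{A}(a,\mathbf{n})$ for some $a\in A^{\times}$ and $\mathbf{n}\in\Gamma(\Spec A,\underline{\Z})$, then $\mathbf{n}$ is constant, say equal to $n_{0}$, on a clopen $U\subseteq\Spec A$ on which $\epsilon$ restricts to a nonzero element, and $v|_{U}=(a|_{U})\det^{n_{0}}$. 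Grading $\Gamma(U,\mc{O})[\mr{X}_{\bullet}][\det^{-1}]$ by $\deg\mr{X}_{i,j}=1$ (so $\deg\det^{\pm 1}=\pm n$), the right-hand side is homogeneous, while $v|_{U}$ has nonzero homogeneous components in degrees $0$ and $1$ --- a contradiction. Hence $\Phi_{A}$ is not surjective.

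For the ``if'' direction assume $A$ is reduced. First reduce to the Noetherian case: a unit $u\in(A[\mr{X}_{\bullet},\tfrac{1}{\det}])^{\times}$ and its inverse have coefficients in some finitely generated $\Z$-subalgebra $A_{0}\subseteq A$, which is reduced (being a subring of $A$) and Noetherian, and $u\in(A_{0}[\mr{X}_{\bullet},\tfrac{1}{\det}])^{\times}$; so one may treat $A_{0}$. Then reduce to the connected case, since a Noetherian ring is a finite product of indecomposable ones and $\Gamma(\Spec(\prod_{k}A_{k}),\underline{\Z})=\prod_{k}\Gamma(\Spec A_{k},\underline{\Z})$. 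So let $A$ be reduced, Noetherian, connected, with minimal primes $\mf{p}_{1},\dotsc,\mf{p}_{r}$, and $u\in(A[\mr{X}_{\bullet},\tfrac{1}{\det}])^{\times}$. For each $i$ the ring $A/\mf{p}_{i}$ is a domain, so by \Cref{20171109-08} the image of $u$ in $(A/\mf{p}_{i})[\mr{X}_{\bullet},\tfrac{1}{\det}]$ equals $\epsilon_{i}\det^{m_{i}}$ for a unique $\epsilon_{i}\in(A/\mf{p}_{i})^{\times}$ and $m_{i}\in\Z$.

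The crucial step --- and the one I expect to be the main obstacle, as it is the only place reducedness does work beyond the domain case --- is that $m_{i}$ is independent of $i$. Since $A$ is reduced, $\Spec A=\bigcup_{i}V(\mf{p}_{i})$ is a finite union of irreducible closed sets, so connectedness forces the graph on $\{1,\dotsc,r\}$ with an edge $i\sim j$ whenever $V(\mf{p}_{i})\cap V(\mf{p}_{j})\neq\emptyset$ to be connected; thus it suffices to show $m_{i}=m_{j}$ when $i\sim j$. Writing $u=h/\det^{N}$ and $u^{-1}=g/\det^{N'}$ with $h,g\in A[\mr{X}_{\bullet}]$ (legitimate because $\det$ is a nonzerodivisor, \Cref{20171109-06}), one has $hg=\det^{N+N'}$, and modulo $\mf{p}_{i}$ (resp. $\mf{p}_{j}$) the polynomials $h$ and $g$ become explicit units times powers of $\det$. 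The key elementary fact is that $\det^{m}$ is homogeneous of degree $nm$ and that the monomial $(\mr{X}_{1,1}\dotsb\mr{X}_{n,n})^{m}$ occurs in $\det^{m}$ with coefficient $1$. Granting $m_{i}<m_{j}$, comparing for suitable exponents the coefficients of $(\mr{X}_{1,1}\dotsb\mr{X}_{n,n})^{\bullet}$ in $h$ and in $g$ --- and using that the relevant powers of $\det$ have distinct degrees modulo $\mf{p}_{i}$ and modulo $\mf{p}_{j}$ --- produces inside $A/(\mf{p}_{i}\cap\mf{p}_{j})\hookrightarrow(A/\mf{p}_{i})\times(A/\mf{p}_{j})$ two elements with images the complementary idempotents $(1,0)$ and $(0,1)$, hence nontrivial idempotents of $A/(\mf{p}_{i}\cap\mf{p}_{j})$; but $\Spec(A/(\mf{p}_{i}\cap\mf{p}_{j}))=V(\mf{p}_{i})\cup V(\mf{p}_{j})$ is connected since $i\sim j$. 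This contradiction gives $m_{i}=m_{j}$.

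Finally, set $m:=m_{i}$ and $u':=u\det^{-m}$, so $u'$ is a unit whose image in each $(A/\mf{p}_{i})[\mr{X}_{\bullet},\tfrac{1}{\det}]$ is the constant $\epsilon_{i}$. Writing $u'=h'/\det^{M}$ with $h'\in A[\mr{X}_{\bullet}]$: modulo every $\mf{p}_{i}$ the polynomial $h'$ equals $\epsilon_{i}\det^{M}$, which is homogeneous of degree $nM$, so each homogeneous component of $h'$ of degree $\neq nM$ lies in $\bigcap_{i}\mf{p}_{i}[\mr{X}_{\bullet}]=0$; hence $h'$ is homogeneous of degree $nM$, and comparing the coefficient $c\in A$ of $(\mr{X}_{1,1}\dotsb\mr{X}_{n,n})^{M}$ in $h'$ with its reductions modulo the $\mf{p}_{i}$ yields $h'=c\det^{M}$, i.e. $u'=c\in A$. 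Applying the same to $u'^{-1}$ puts it in $A$, so $u'u'^{-1}=1$ forces $u'\in A^{\times}$; thus $u=c\det^{m}$ lies in the image of $\Phi_{A}$. Unwinding the two reductions (all component counts being finite), $m$ is recovered as a locally constant $\Z$-valued function on the original $\Spec A$, so $\Phi_{A}$ is surjective, and together with injectivity it is an isomorphism.
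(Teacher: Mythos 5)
Your proposal is correct in outline but takes a genuinely different route from the paper. For the reduced direction, the paper restricts a unit and its inverse along the scalar line $\mr{X}_{\bullet} = T\cdot \id_{n}$, invokes Neher's theorem that units of $A[T^{\pm}]$ over a connected reduced ring are homogeneous, and then passes through the total quotient ring $\mr{Q}(A) = \prod_i k(\mf{p}_{i})$ with a limit/denominator argument to write each $\beta_i$ as an $A$-combination of powers of $\det$, homogeneity then killing all but one term. You instead reduce modulo the minimal primes directly, apply \Cref{20171109-08} to each domain $A/\mf{p}_{i}$, and glue the exponents $m_{i}$ across components using connectedness of the component graph together with the fact that $(\mr{X}_{1,1}\dotsm\mr{X}_{n,n})^{m}$ occurs in $\det^{m}$ with coefficient $1$; your endgame (homogeneous components of $h'$ of degree $\ne nM$ lie in $\bigcap_i \mf{p}_{i}[\mr{X}_{\bullet}] = 0$) replaces the citation to Neher by a self-contained argument. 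Your nonreduced counterexample $1+\epsilon\mr{X}_{1,1}$ differs from the paper's $\det + a$ (with $a^{2}=0$), but both are ruled out of the image by the same grading obstruction, and your clopen-decomposition handling of the locally constant exponent is correct.

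One local repair is needed in your crucial step: the coefficients you extract do not map to the complementary idempotents $(1,0)$ and $(0,1)$ in $(A/\mf{p}_{i}) \times (A/\mf{p}_{j})$ --- they map to $(0,\epsilon_{j})$ and $(\epsilon_{i}^{-1},0)$, and these cannot be normalized to idempotents inside $A/(\mf{p}_{i}\cap\mf{p}_{j})$ without inverting something not known to be invertible there. But you do not need idempotents: assuming $m_{i} < m_{j}$, the single coefficient $c$ of $(\mr{X}_{1,1}\dotsm\mr{X}_{n,n})^{N+m_{j}}$ in $h$ satisfies $c \in \mf{p}_{i}$ (wrong degree modulo $\mf{p}_{i}$) while $c \equiv \epsilon_{j} \bmod \mf{p}_{j}$ with $\epsilon_{j}$ a unit of $A/\mf{p}_{j}$; hence no prime of $A$ can contain both $\mf{p}_{i}$ and $\mf{p}_{j}$, i.e. $V(\mf{p}_{i}) \cap V(\mf{p}_{j}) = \emptyset$, contradicting $i \sim j$ directly. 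With that one-line fix your argument goes through.
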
 \begin{proof} If $A$ is an integral domain, we have that $\Phi_{A}$ is an isomorphism by \Cref{20171109-08}. More generally, if $A$ is the finite product of integral domains, then $\Phi_{A}$ is an isomorphism. \par Suppose that $A$ is reduced. By limit arguments, we may assume that $A$ is (reduced and) a finite type $\Z$-algebra. We may assume that $\Spec A$ is connected. Let $\mf{p}_{1},\dotsc,\mf{p}_{r}$ be the minimal primes of $A$. Then the total ring of fractions of $A$ is \[ \mr{Q}(A) = k(\mf{p}_{1}) \oplus \dotsb \oplus k(\mf{p}_{r}) \] by e.g. \cite[02LX]{SP}. Let \[ \frac{\beta_{1}}{\det^{f_{1}}} , \frac{\beta_{2}}{\det^{f_{2}}} \] be two elements of $A[\mr{X}_{\bullet},\frac{1}{\det}]$ with $\beta_{i} \in A[\mr{X}_{\bullet}]$ and $f_{1},f_{2} \in \Z_{\ge 0}$ such that \[ \frac{\beta_{1}\beta_{2}}{\det^{f_{1}+f_{2}}} = 1 \] in $A[\mr{X}_{\bullet},\frac{1}{\det}]$. Then $\beta_{1}\beta_{2}\det^{f_{3}} = \det^{f_{1}+f_{2}+f_{3}}$ in $A[\mr{X}_{\bullet}]$ for some $f_{3}$, but $\det$ is a nonzerodivisor in $A[\mr{X}_{\bullet}]$ (by \Cref{20171109-06}) so \begin{align} \label{20171109-02-eqn-01} \beta_{1}\beta_{2} = {\det}^{f_{1}+f_{2}} \end{align} in $A[\mr{X}_{\bullet}]$. Plugging in $\mr{X}_{\bullet} = T \cdot \id_{n}$ for a variable $T$ into \labelcref{20171109-02-eqn-01} gives \[ \beta_{1}|_{T \cdot \id_{n}} \cdot \beta_{2}|_{T \cdot \id_{n}} = T^{n(f_{1}+f_{2})} \] so $\beta_{1}|_{T \cdot \id_{n}},\beta_{2}|_{T \cdot \id_{n}}$ are units of $A[T^{\pm}]$; thus (since $A$ is connected and reduced) we have by \cite[Corollary 6]{NEHER-IANEITGAOAUPG} that $\beta_{1}|_{T \cdot \id_{n}},\beta_{2}|_{T \cdot \id_{n}}$ are homogeneous. \par The image of $\beta_{i}$ in $(\mr{Q}(A)[\mr{X}_{\bullet},\frac{1}{\det}])^{\times}$ is contained in the image of $\Phi_{\mr{Q}(A)}$ so by limit arguments there exists a nonzerodivisor $s_{i} \in A$ such that the image of $\beta_{i}$ in $(A[\frac{1}{s_{i}}][\mr{X}_{\bullet},\frac{1}{\det}])^{\times}$ is contained in the image of $\Phi_{A[\frac{1}{s_{i}}]}$; in other words, there exist $a_{i,1},\dotsc,a_{i,m_{i}} \in A[\frac{1}{s_{i}}]$ (say $a_{i,1} \ne 0$) and integers $0 \le e_{i,1} < \dotsb < e_{i,m_{i}}$ such that \[ \textstyle \beta_{i} = \sum_{\ell=1}^{m_{i}} a_{i,\ell}{\det}^{e_{i,\ell}} \] in $A[\frac{1}{s_{i}}][\mr{X}_{\bullet}]$; here $\beta_{i} \in A[\mr{X}_{\bullet}]$ implies $a_{i,\ell} \in A$ for all $\ell$. Since $\beta_{i}|_{T \cdot \id_{n}} = \sum_{\ell=1}^{m_{i}} a_{\ell}T^{ne_{i,\ell}}$ is homogeneous in $A[\frac{1}{s_{i}}][\mr{X}_{\bullet}]$, all but one $a_{i,\ell}$ is nonzero, in other words $\beta_{i} = a_{i,1}\det^{e_{i,1}}$. This means \[ a_{1,1}a_{2,1}{\det}^{e_{1,1}+e_{2,1}} = {\det}^{f_{1}+f_{2}} \] in $A[\frac{1}{s_{1}s_{2}}][\mr{X}_{\bullet}]$; thus $a_{1,1}a_{2,1} = 1$ in $A$, so $a_{1,1},a_{2,1}$ are units of $A$. \par (Thanks to Justin Chen for pointing out the following.) If $a \in A$ is nonzero and satisfies $a^{2} = 0$, then \[ (\det+a)(\det-a) = {\det}^{2} \] so $\det+a$ is a unit of $A[\mr{X}_{\bullet},\frac{1}{\det}]$ which is not in the image of \labelcref{20171109-21-eqn-01}. \end{proof}

\begin{lemma} \label{20170930-48} Let $S$ be a locally Noetherian, integral scheme such that, for every point $s \in S$ of codimension 1, the local ring $\mc{O}_{S,s}$ is regular. For any positive integer $p$, the pullback \begin{align} \label{20170930-48-eqn-01} \Pic(S) \to \Pic(S \times_{\Spec \Z} (\GL_{n,\Z})^{\times p}) \end{align} is an isomorphism. \end{lemma}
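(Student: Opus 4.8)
The plan is to mimic Proof~1 of \Cref{20170930-06}: I would verify the hypotheses of the Picard-group descent criterion \cite[$\text{IV}_{4}$, (21.4.9)]{EGA} for the first projection
$\pi \colon Y := S \times_{\Spec \Z} (\GL_{n,\Z})^{\times p} \to S$.
Since $(\GL_{n,\Z})^{\times p} \to \Spec \Z$ is smooth, affine, and admits the identity section, its base change $\pi$ is faithfully flat, quasi-compact, open, and has a section; in particular $\pi^{\ast}$ is already injective, so that the remaining content is a local condition: for every point $s \in S$ with $\dim \mc{O}_{S,s} \le 1$, the pullback $\Pic(\Spec \mc{O}_{S,s}) \to \Pic\big(Y \times_{S} \Spec \mc{O}_{S,s}\big)$ must be bijective.

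Fix such an $s$ and set $A := \mc{O}_{S,s}$. If $s$ is the generic point of $S$ then $A$ is the function field; if $s$ has codimension $1$ then, by hypothesis, $A$ is a regular local ring of dimension $1$, i.e.\ a discrete valuation ring. In either case $A$ is a principal ideal domain, so $\Pic(A) = 0$, and the polynomial ring over $A$ in the $pn^{2}$ matrix coordinates is regular and, being a polynomial ring over a UFD, is itself a UFD. Hence $\A_{A}^{pn^{2}}$ is a normal, Noetherian, integral scheme with $\Cl(\A_{A}^{pn^{2}}) = 0$. Now $Y \times_{S} \Spec A \cong (\GL_{n,A})^{\times p}$ is the open subscheme of $\A_{A}^{pn^{2}}$ obtained by inverting the product of the $p$ determinant polynomials (cf.\ \Cref{20171109-06}), so the restriction $\Cl(\A_{A}^{pn^{2}}) \to \Cl\big((\GL_{n,A})^{\times p}\big)$ is surjective; therefore $\Cl\big((\GL_{n,A})^{\times p}\big) = 0$. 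Since $(\GL_{n,A})^{\times p}$ is regular, $\Pic$ agrees with $\Cl$ on it, giving $\Pic\big((\GL_{n,A})^{\times p}\big) = 0 = \Pic(A)$, which is the required bijectivity.

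This reduces the lemma to a formal application of the descent criterion. The steps deserving the most care are: confirming that \cite[$\text{IV}_{4}$, (21.4.9)]{EGA} indeed only asks for bijectivity at points of codimension $\le 1$ (this is exactly what was used in \Cref{20170930-06}); the identification $Y \times_{S} \Spec \mc{O}_{S,s} \cong (\GL_{n,\mc{O}_{S,s}})^{\times p}$ as an open complement of a divisor in affine space; and the standard facts that $\Pic = \Cl$ for regular Noetherian integral schemes and that $\Cl(X) \to \Cl(U)$ is surjective when $U$ is an open subscheme of a normal Noetherian integral scheme $X$. I do not anticipate a genuine obstacle: the only role of the hypothesis on $S$ is to force the codimension-$1$ local rings to be discrete valuation rings, after which the UFD property of polynomial rings over a PID does all the work.
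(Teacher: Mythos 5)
Your proposal is correct and follows essentially the same route as the paper: both verify the hypotheses of \cite[$\text{IV}_{4}$, (21.4.9)]{EGA} for the projection and reduce to the codimension $\le 1$ local rings, using $\Pic \simeq \Cl$ on regular schemes and surjectivity of class-group restriction to the open subscheme $(\GL_{n,A})^{\times p} \subseteq \A_{A}^{pn^{2}}$. The only cosmetic difference is that you get triviality of $\Pic(\A_{A}^{pn^{2}})$ from the UFD property of a polynomial ring over a DVR, while the paper cites seminormality of $A$ to identify $\Pic(A) \to \Pic(\A_{A}^{pn^{2}})$ as an isomorphism; both are valid.
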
 \begin{proof} We check the conditions of \cite[$\text{IV}_{4}$, (21.4.9)]{EGA}. Let $\pi : S \times_{\Spec \Z} (\GL_{n,\Z})^{\times p} \to S$ be the projection; it is faithfully flat and has a section, hence \labelcref{20170930-48-eqn-01} is injective; the map $\pi$ is both quasi-compact and open. Given a codimension 1 point $s \in S$, set $A := \mc{O}_{S,s}$; since $A$ is seminormal, the pullback $\Pic(A) \to \Pic(\A_{A}^{pn^{2}})$ is an isomorphism; since $A$ is regular, for any open subscheme $U \subseteq \A_{A}^{pn^{2}}$ we have an isomorphism $\Pic(U) \simeq \Cl(U)$; the restriction map $\Cl(\A_{A}^{pn^{2}}) \to \Cl(U)$ is surjective; we take $U := \Spec A \times_{\Spec \Z} (\GL_{n,\Z})^{\times p}$. \end{proof}

\begin{proposition}[Brauer group of classifying stack $\mathrm{BGL}_{n}$] \label{20170920-06} Let $S$ be a locally Noetherian, integral scheme such that, for every point $s \in S$ of codimension 1, the local ring $\mc{O}_{S,s}$ is regular. Let $\xi : S \to \mr{B}\GL_{n,S}$ be the morphism corresponding to the trivial $\GL_{n}$-torsor. Then the pullback map \[ \xi^{\ast} : \H_{\fppf}^{2}(\mr{B}\GL_{n,S},\G_{m}) \to \H_{\fppf}^{2}(S , \G_{m}) \] is an isomorphism. \end{proposition}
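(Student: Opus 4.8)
The plan is to compute $\H_{\fppf}^{2}(\mr{B}\GL_{n,S},\G_{m})$ with the cohomological descent spectral sequence of \Cref{20151211-04} attached to the smooth cover $\xi : S \to \mr{B}\GL_{n,S}$. Since $\GL_{n,S}$ acts trivially on $S$, the $(p+1)$-fold fiber product is $S^{p} \simeq S \times_{\Spec \Z} (\GL_{n,\Z})^{\times p}$, and the spectral sequence is
\[ \mr{E}_{1}^{p,q} = \H_{\fppf}^{q}\bigl(S \times_{\Spec \Z} (\GL_{n,\Z})^{\times p},\G_{m}\bigr) \implies \H_{\fppf}^{p+q}(\mr{B}\GL_{n,S},\G_{m}) \]
with differentials $\mr{E}_{1}^{p,q} \to \mr{E}_{1}^{p+1,q}$, whose leftmost edge map $\H_{\fppf}^{2}(\mr{B}\GL_{n,S},\G_{m}) \to \mr{E}_{1}^{0,2} = \H_{\fppf}^{2}(S,\G_{m})$ is $\xi^{\ast}$. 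Since the structure map $\pi : \mr{B}\GL_{n,S} \to S$ satisfies $\pi \circ \xi = \id_{S}$, the map $\xi^{\ast}$ is split surjective with section $\pi^{\ast}$; so it suffices to prove $\mr{E}_{2}^{1,1} = 0$ and $\mr{E}_{2}^{2,0} = 0$, for then $\mr{E}_{\infty}^{1,1} = \mr{E}_{\infty}^{2,0} = 0$, the kernel of the edge map is $F^{1}\H_{\fppf}^{2}(\mr{B}\GL_{n,S},\G_{m}) = 0$, and $\xi^{\ast}$ is also injective, hence an isomorphism.

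For the row $q = 1$: by \Cref{20170930-48} the pullback $\Pic(S) \to \Pic\bigl(S \times_{\Spec \Z} (\GL_{n,\Z})^{\times p}\bigr)$ is an isomorphism for every $p \ge 1$, and the face maps of the bar construction are all morphisms over $S$, so under these identifications each $\partial_{i}^{\ast}$ becomes the identity of $\Pic(S)$; hence $\mr{d}_{1}^{0,1} = 0$ (the two maps $S^{1} \rightrightarrows S^{0}$ coincide) and, for $p \ge 1$, $\mr{d}_{1}^{p,1}$ is $0$ if $p$ is even and an isomorphism if $p$ is odd. In particular $\mr{E}_{2}^{1,1} = 0$.

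For the row $q = 0$: I would first check that for our integral scheme $S$ the canonical map $\Gamma(S,\G_{m}) \oplus \Z^{\oplus p} \to \Gamma\bigl(S \times_{\Spec \Z} (\GL_{n,\Z})^{\times p},\G_{m}\bigr)$, with $\Z^{\oplus p}$ spanned by the determinant characters of the $p$ factors, is an isomorphism. For affine $S = \Spec A$ with $A$ a domain this follows by iterating \Cref{20171109-08} (the coordinate ring of $\GL_{n,A}^{\times (p-1)}$ is again a domain), and the general integral case follows by the gluing/diagram-chase argument of \Cref{20151026-05}. With this identification the complex $\mr{E}_{1}^{\bullet,0}$ splits as a direct sum: on the $\Gamma(S,\G_{m})$-summand every $\partial_{i}^{\ast}$ is the identity, so this subcomplex is $\Gamma(S,\G_{m}) \xrightarrow{0} \Gamma(S,\G_{m}) \xrightarrow{\id} \Gamma(S,\G_{m}) \xrightarrow{0} \dotsb$, acyclic in positive degrees; and on the $\Z^{\oplus \bullet}$-summand, since $\det : \GL_{n} \to \G_{m}$ is a homomorphism, the face maps act by precisely the formulas computed in the proof of \Cref{20170920-07} for the lattice $\mr{M} = \Z$, so this subcomplex is acyclic in degrees $\ge 2$. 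Hence $\mr{E}_{2}^{2,0} = 0$.

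This completes the argument: $\mr{E}_{2}^{1,1} = \mr{E}_{2}^{2,0} = 0$ forces $\xi^{\ast}$ to be both injective and surjective. I expect the $q = 0$ row to be the only real obstacle — one must nail down the unit group of $S \times_{\Spec \Z} (\GL_{n,\Z})^{\times p}$ over the integral scheme $S$ (globalizing \Cref{20171109-08} just as \Cref{20151026-05} globalizes the affine torus computation) and then recognize its ``character part'' as the cosimplicial abelian group already shown to be acyclic in \Cref{20170920-07}, so that no fresh combinatorics is required; the normality hypothesis on $S$ enters only through \Cref{20170930-48} in the $q = 1$ row.
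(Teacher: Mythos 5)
Your proposal is correct and takes essentially the same route as the paper: the descent spectral sequence for the cover $\xi$, with the $q=1$ row handled via \Cref{20170930-48} and the $q=0$ row identified through the determinant characters (\Cref{20171109-08}, globalized as in \Cref{20151026-05}) with the complex analyzed in \Cref{20170920-07}. Your explicit reduction to $\mr{E}_{2}^{1,1} = \mr{E}_{2}^{2,0} = 0$ and the splitting of the bottom row into unit and character summands merely spell out what the paper leaves implicit (and your ``acyclic in degrees $\ge 2$'' for the character part is exactly the vanishing actually needed).
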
 \begin{proof} We set $G := \GL_{n,S}$ for convenience. The cohomological descent spectral sequence associated to the covering $\xi : S \to \mr{B}G$ gives a spectral sequence \begin{align} \label{20170920-06-eqn-01} \mr{E}_{1}^{p,q} = \H_{\fppf}^{q}(G^{p} , \G_{m}) \implies \H_{\fppf}^{p+q}(\mr{B}G,\G_{m}) \end{align} with differentials $\mr{d}_{1}^{p,q} : \mr{E}_{1}^{p,q} \to \mr{E}_{1}^{p+1,q}$. \begin{center}\begin{tikzpicture}[>=stealth] 
\matrix[matrix of math nodes,row sep=1em, column sep=1.5em, text height=1.5ex, text depth=0.25ex] { 
|[name=01]| \vdots & |[name=02]| \vdots & |[name=03]| \vdots & |[name=04]| \vdots & |[name=05]| \\
|[name=11]| \H_{\fppf}^{3}(G^{0},\G_{m}) & |[name=12]| \H_{\fppf}^{3}(G^{1},\G_{m}) & |[name=13]| \H_{\fppf}^{3}(G^{2},\G_{m}) & |[name=14]| \H_{\fppf}^{3}(G^{3},\G_{m}) & |[name=15]| \dotsb \\ 
|[name=21]| \H_{\fppf}^{2}(G^{0},\G_{m}) & |[name=22]| \H_{\fppf}^{2}(G^{1},\G_{m}) & |[name=23]| \H_{\fppf}^{2}(G^{2},\G_{m}) & |[name=24]| \H_{\fppf}^{2}(G^{3},\G_{m}) & |[name=25]| \dotsb \\ 
|[name=31]| \H_{\fppf}^{1}(G^{0},\G_{m}) & |[name=32]| \H_{\fppf}^{1}(G^{1},\G_{m}) & |[name=33]| \H_{\fppf}^{1}(G^{2},\G_{m}) & |[name=34]| \H_{\fppf}^{1}(G^{3},\G_{m}) & |[name=35]| \dotsb \\ 
|[name=41]| \H_{\fppf}^{0}(G^{0},\G_{m}) & |[name=42]| \H_{\fppf}^{0}(G^{1},\G_{m}) & |[name=43]| \H_{\fppf}^{0}(G^{2},\G_{m}) & |[name=44]| \H_{\fppf}^{0}(G^{3},\G_{m}) & |[name=45]| \dotsb \\ 
}; 
\draw[->,font=\scriptsize]
(11) edge (12) (12) edge (13) (13) edge (14) (14) edge (15)
(21) edge (22) (22) edge (23) (23) edge (24) (24) edge (25)
(31) edge (32) (32) edge (33) (33) edge (34) (34) edge (35)
(41) edge (42) (42) edge (43) (43) edge (44) (44) edge (45); \end{tikzpicture} \end{center} Note that each differential $\mr{d}_{1}^{0,q} : \mr{E}_{1}^{0,q} \to \mr{E}_{1}^{1,q}$ is the $0$ map since the two projection maps $G \rightrightarrows S$ are equal (since $G$ acts trivially on $S$). By \Cref{20171109-08}, there is an isomorphism \begin{align} \label{20170920-06-eqn-02} A^{\times} \oplus \Z^{\oplus p} \to \Gamma(G^{p} , \G_{m}) \end{align} sending the $i$th generator to the determinant of the $i$th component of $G^{p}$, and furthermore the complex $\mr{E}_{1}^{\bullet,0}$ becomes identified with the corresponding complex for $\mr{B}\G_{m}$ via the map $\mr{B}G \to \mr{B}\G_{m}$ defined by the determinant, hence is acyclic in degrees $p \ge 1$ by the argument in \Cref{20170920-07}. Moreover we have $\mr{E}_{1}^{p,1} = \H_{\fppf}^{1}(G^{p},\G_{m}) = \Pic(G^{p})$ and the pullback $\Pic(S) \to \Pic(G^{p})$ is an isomorphism for all $p \ge 0$ by \Cref{20170930-48}. The above considerations show that the canonical pullback map \[ \H_{\fppf}^{2}(\mr{B}G,\G_{m}) \to \H_{\fppf}^{2}(S,\G_{m}) \] is an isomorphism. \end{proof}

\begin{corollary} \label{20170920-18} Assume the setup of \Cref{20170920-06}. Then $\Br = \Br'$ for $S$ if and only if $\Br = \Br'$ for $\mr{B}\GL_{n,S}$. \end{corollary}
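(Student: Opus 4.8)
This is a formal consequence of \Cref{20170920-06} and \Cref{20160630-wr-02}, so the plan is essentially a short diagram chase. First I would fix notation: set $G := \GL_{n,S}$, let $\pi : \mr{B}G \to S$ be the structure morphism and $\xi : S \to \mr{B}G$ the section classifying the trivial torsor, so that $\pi \circ \xi = \id_{S}$ and therefore $\xi^{\ast}\pi^{\ast} = \id$ on $\H^{2}_{\fppf}(S,\G_{m})$ (hence also on $\Br(S)$ and on $\Br'(S)$).

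For the direction ``$\Br = \Br'$ for $\mr{B}G$ implies $\Br = \Br'$ for $S$'', I would invoke \Cref{20160630-wr-02} with $f = \pi$ and section $s = \xi$: for any $c \in \Br'(S)$ the class $\pi^{\ast}c$ lies in $\Br'(\mr{B}G) = \im \alpha_{\mr{B}G}$ by hypothesis, so $c \in \im \alpha_{S}$; as $\alpha_{S}$ is always injective, this gives the claim. For the converse, assume $\Br = \Br'$ for $S$ and take $c \in \Br'(\mr{B}G)$. By \Cref{20170920-06} the pullback $\xi^{\ast} : \H^{2}_{\fppf}(\mr{B}G,\G_{m}) \to \H^{2}_{\fppf}(S,\G_{m})$ is an isomorphism, and together with $\xi^{\ast}\pi^{\ast} = \id$ this forces $\pi^{\ast}$ to be its inverse, in particular an isomorphism. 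Hence $c = \pi^{\ast}d$ for $d := \xi^{\ast}c$, and $d$ is torsion because $c$ is, so $d \in \Br'(S) = \im \alpha_{S}$. Writing $d = \alpha_{S}(\mc{A})$ for an Azumaya $\mc{O}_{S}$-algebra $\mc{A}$ and using functoriality of the Brauer map, $c = \pi^{\ast}\alpha_{S}(\mc{A}) = \alpha_{\mr{B}G}(\pi^{\ast}\mc{A}) \in \im \alpha_{\mr{B}G}$, which is what we want.

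There is no real obstacle here: all the substance sits in \Cref{20170920-06}, where the hypotheses on $S$ (locally Noetherian, integral, regular in codimension $1$) were used to identify $\H^{2}_{\fppf}(\mr{B}G,\G_{m})$ with $\H^{2}_{\fppf}(S,\G_{m})$ via $\xi^{\ast}$; the only point requiring a moment's care is keeping track of the distinction between the full second cohomology groups, on which $\xi^{\ast}$ and $\pi^{\ast}$ are mutually inverse isomorphisms, and their torsion subgroups $\Br'$, which is where the Brauer maps live and where the hypothesis $\Br = \Br'$ for $S$ is applied.
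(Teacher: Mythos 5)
Your argument is correct and is essentially the paper's own proof spelled out: the paper simply says that the argument of \Cref{20170920-03} applies using \Cref{20170920-06}, which amounts exactly to your two steps (the section argument of \Cref{20160630-wr-02} for one direction, and the isomorphism $\xi^{\ast}$, hence $\pi^{\ast}$, on $\H^{2}_{\fppf}(-,\G_{m})$ restricted to torsion for the other). No gaps; nothing further is needed.
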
 \begin{proof} The argument of \Cref{20170920-03} applies, using \Cref{20170920-06}. \end{proof}

\begin{remark} \label{20171109-29} Note that \Cref{20190128-13} does not apply for $\GL_{n}$ since it is not linearly reductive over $\Z$ \cite[12.4]{ALPER-GMSFAS}. \end{remark}

\bibliography{../allbib}
\bibliographystyle{alpha}

\end{document}